\theoremstyle{definition}
  \newtheorem{definition}[subsection]{Definition}
  \newtheorem{definition-proposition}[subsection]{Definition-Proposition}
  \newtheorem{example}[subsection]{Example}
  \newtheorem{remark}[subsection]{Remark}
\theoremstyle{theorem}
  \newtheorem{theorem}[subsection]{Theorem}
  \newtheorem{proposition}[subsection]{Proposition}
  \newtheorem*{proposition*}{Proposition}
  \newtheorem{lemma}[subsection]{Lemma}
  \newtheorem{corollary}[subsection]{Corollary}
  \newtheorem{conjecture}[subsection]{Conjecture}
  \newtheorem{assumption}[subsection]{Assumption}
\newcommand{\maxx}{{\mathrm{max}}}
\newcommand{\rhorig}{{$\rho$-rigid}\xspace}
\newcommand{\adele}{\mathbb{A}_\mathrm{f}}
\newcommand{\Abb}{\mathbb{A}}
\newcommand{\Cbb}{\mathbb{C}}
\newcommand{\Gbb}{\mathbb{G}}
\newcommand{\Nbb}{\mathbb{N}}
\newcommand{\Qbb}{\mathbb{Q}}
\newcommand{\Rbb}{\mathbb{R}}
\newcommand{\Sbb}{\mathbb{S}}
\newcommand{\Zbb}{\mathbb{Z}}
\newcommand{\Zbhat}{\hat{\mathbb{Z}}}
\newcommand{\Cbf}{\mathbf{C}}
\newcommand{\Hbf}{\mathbf{H}}
\newcommand{\Gbf}{\mathbf{G}}
\newcommand{\ibf}{\mathbf{i}}
\newcommand{\Mbf}{\mathbf{M}}
\newcommand{\Nbf}{\mathbf{N}}
\newcommand{\Pbf}{\mathbf{P}}
\newcommand{\Tbf}{\mathbf{T}}
\newcommand{\Ubf}{\mathbf{U}}
\newcommand{\Vbf}{\mathbf{V}}
\newcommand{\Wbf}{\mathbf{W}}
\newcommand{\Zbf}{\mathbf{Z}}
\newcommand{\GLbf}{\mathbf{GL}}
\newcommand{\gfrak}{\mathfrak{g}}
\newcommand{\pfrak}{\mathfrak{p}}
\newcommand{\Xfrak}{\mathfrak{X}}
\newcommand{\Yfrak}{\mathfrak{Y}}
\newcommand{\arm}{\mathrm{a}}
\newcommand{\mrm}{\mathrm{m}}
\newcommand{\Hscr}{\mathscr{H}}
\newcommand{\Pscr}{\mathscr{P}}
\newcommand{\Acal}{\mathcal{A}}
\newcommand{\Ccal}{\mathcal{C}}
\newcommand{\Dcal}{\mathcal{D}}
\newcommand{\Fcal}{\mathcal{F}}
\newcommand{\Pcal}{\mathcal{P}}
\newcommand{\Ucal}{\mathcal{U}}
\newcommand{\Wcal}{\mathcal{W}}
\newcommand{\Xcal}{\mathcal{X}}
\newcommand{\Ycal}{\mathcal{Y}}
\newcommand{\Pcalhat}{\hat{\mathcal{P}}}
\newcommand{\ad}{\mathrm{ad}}
\newcommand{\Ad}{\mathrm{Ad}}
\newcommand{\der}{\mathrm{der}}
\newcommand{\ra}{\rightarrow}
\newcommand{\lra}{\longrightarrow}
\newcommand{\mono}{\hookrightarrow}
\newcommand{\isom}{\cong}
\newcommand{\Hom}{\mathrm{Hom}}
\newcommand{\Group}{\mathrm{Group}}
\newcommand{\GSp}{\mathrm{GSp}}
\newcommand{\Sp}{\mathrm{Sp}}
\newcommand{\Ker}{\mathrm{Ker}}
\newcommand{\Lie}{\mathrm{Lie}}
\newcommand{\Res}{\mathrm{Res}}
\newcommand{\Supp}{\mathrm{Supp}}
\newcommand{\Int}{\mathrm{Int}}
\newcommand{\Stab}{\mathrm{Stab}}
\newcommand{\bsh}{\backslash}
\newcommand{\inv}{{-1}}
\newcommand{\ot}{\overset}
\newcommand{\wrt}{{with\ respect\ to}\xspace}
\newcommand{\cosg}{{compact\ open\ subgroup}\xspace}
\newcommand{\cosgs}{{compact\ open\ subgroups}\xspace}
\title{On Special Subvarieties of Kuga Varieties}
\author{Ke Chen} 
\address{Institut f\"ur Mathematik, Johannes Gutenberg Universit\"at Mainz, 55099 Mainz, Deutchland }
\email{chenk@uni-mainz.de} 
\subjclass{Primary 14G35(11G18), Secondary 14K05}
\keywords{Shimura varieties, Kuga varieties, Andr\'e-Oort conjecture, special subvarieties, Diophantine approximation, equidistribution}
\begin{document}

\begin{abstract}  In this paper we prove the equidistribution of certain families of special subvarieties in Kuga varieties, which is a special case of the general Andr\'e-Oort conjecture formulated for mixed Shimura varieties. Our approach is parallel to the pure case treated in the works of L. Clozel, E. Ullmo, and A. Yafaev, which uses tools from ergodic theory.

\end{abstract}

\maketitle

\setcounter{tocdepth}{2}
\tableofcontents

\setlength{\baselineskip}{18pt}

\section*{Introduction}

This paper is an attempt to understand certain aspects of the following conjecture:
%0.1
\begin{conjecture}(Andr\'e-Oort-Pink, cf.\cite{andre-note},\cite{pink-combination}) Let $M$ be a mixed Shimura variety, and let $Z\subset M$ be an arbitrary closed subvariety. Set $\Sigma(Z)$ to be the set of  special subvarieties of $M$ that are contained in $Z$, and $\Sigma_\maxx(Z)$ the subset of maximal special subvarieties in $\Sigma(Z)$ (for the inclusion order). Then $\Sigma_\maxx(Z)$ is  finite.
\end{conjecture}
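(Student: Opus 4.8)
The plan is to follow the ergodic-theoretic strategy pioneered by Clozel, Ullmo and Yafaev in the pure case, adapted to the mixed Shimura setting of a Kuga variety $M$. I argue by contradiction: assume $\Sigma_\maxx(Z)$ is infinite, so that $Z$ contains an infinite sequence $(S_n)_{n\ge 1}$ of pairwise distinct maximal special subvarieties. Each $S_n$ is the image, under the uniformization $\pi$, of a homogeneous orbit attached to a sub-mixed-Shimura-datum; concretely there is a connected algebraic subgroup $\Hbf_n \subset \Pbf$ defined over $\Qbb$, where $\Pbf$ is the structure group of $M$, an extension of a reductive group $\Gbf$ by a unipotent radical $\Vbf$, together with a point $x_n$, so that $S_n$ is the image of $\Gamma_n \bsh \Hbf_n(\Rbb)^{+}\!\cdot x_n$. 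To each $S_n$ I attach its canonical probability measure $\mu_n$, the push-forward under $\pi$ of the suitably normalized $\Hbf_n(\Rbb)^{+}$-invariant homogeneous measure. The goal is to produce a weak limit $\mu_\infty$ of a subsequence whose support is again a special subvariety $S_\infty$ containing infinitely many of the $S_n$, contradicting their maximality.

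The core of the argument is an equidistribution statement. After passing to a subsequence and conjugating by a sequence $g_n \in \Pbf(\Rbb)$ that is precompact modulo the relevant normalizers, I would arrange that the derived parts of the $\Hbf_n$ stabilize and that the $\mu_n$ become translates of a fixed homogeneous measure. The theorems of Ratner (measure classification for unipotent flows), together with the non-escape-of-mass and limit results of Mozes--Shah and Eskin--Mozes--Shah, then guarantee that a subsequence of $\mu_n$ converges weakly to a homogeneous probability measure $\mu_\infty$ supported on a single orbit closure $\Gamma_\infty \bsh \Hbf_\infty(\Rbb)^{+}\!\cdot x_\infty$. The remaining point is to upgrade ``homogeneous support'' to ``special subvariety'': one checks that the limit group $\Hbf_\infty$ is again defined over $\Qbb$ and that $x_\infty$ satisfies the conditions of a mixed Shimura subdatum, i.e. the Mumford--Tate cocharacter factors through $\Hbf_\infty$, using the algebraicity criterion for special subvarieties. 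Since $Z$ is closed and each $S_n \subset Z$, the support $S_\infty$ lies in $Z$, and it strictly contains infinitely many $S_n$, which is the desired contradiction.

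Two arithmetic inputs feed the dynamical machine and must be established first. First, to ensure the measures $\mu_n$ genuinely spread out rather than concentrating, I need a lower bound on the ``size'' of each $S_n$; in the pure case this is the Galois-orbit lower bound of Edixhoven--Yafaev type, and here it must be formulated for the mixed data, controlling both the level in the base Shimura variety and the torsion/isogeny data in the abelian-scheme fibers. Second, I must verify the hypotheses of Mozes--Shah, namely that no mass escapes to the boundary; in the mixed case this is delicate precisely along the unipotent directions coming from $\Vbf$, where one has to rule out degeneration of the fiberwise measures.

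I expect the main obstacle to be exactly this control of the unipotent fiber direction. In the pure reductive case the groups $\Hbf_n$ are semisimple-by-torus and Ratner theory applies cleanly; for a Kuga variety the orbits live in the non-reductive group $\Pbf(\Rbb) = \Vbf(\Rbb) \rtimes \Gbf(\Rbb)$, and one must apply the homogeneous-dynamics results in this larger group while simultaneously preventing the vector part of the measure from running off to infinity or collapsing. Equivalently, the hard part is to show that the limiting datum $(\Hbf_\infty, x_\infty)$ has a unipotent part that is still of Kuga type, i.e. compatible with a weight filtration and a polarization, so that $S_\infty$ is a genuine special subvariety and not merely a closed homogeneous set. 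Handling this interaction between Ratner-type rigidity and the mixed Hodge structure is where the essential new work lies.
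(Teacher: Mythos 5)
There is a genuine gap, and it begins with a mismatch of scope: the statement you are asked about is a \emph{conjecture}, and the paper does not prove it --- it proves only a special case (Theorem \ref{main-theorem} and Corollary \ref{main-corollary}: finiteness of maximal \emph{$\rho$-rigid $\Cbf$-special} subvarieties, for a \emph{fixed} $\Qbb$-torus $\Cbf$, inside a closed subset of a Kuga variety). Your proposal claims the full statement via ergodic theory plus two ``arithmetic inputs'' that you explicitly defer (Galois-orbit lower bounds and non-divergence). Deferring the first of these is fatal to the claim of a proof: in the pure case the analogous lower bounds of Edixhoven--Yafaev/Ullmo--Yafaev type are exactly the part of the Klingler--Ullmo--Yafaev argument that requires the Generalized Riemann Hypothesis, and no unconditional substitute is supplied here. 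So even granting your dynamical steps, what you have is a conditional strategy, not a proof of the conjecture.

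The second, more structural, gap is that the ergodic machinery you invoke (Ratner, Mozes--Shah, Eskin--Mozes--Shah) only sees measures invariant under subgroups of type $\Hscr$, i.e.\ groups with a large semisimple/unipotent part. A general special subvariety of a Kuga variety need not produce such a measure: for a special point, or for a torsion translate of an abelian subvariety inside a CM fiber (the paper's Example \ref{example}), the associated lattice subspace is $\wp_\Gamma(\Vbf'(\Rbb))$ and the translation by the torsion point $v\in\Vbf(\Qbb)$ is \emph{not reflected at all} at the level of the lattice space. Consequently the map from special subdata to homogeneous measures is not injective, weak limits of the associated measures carry no information about the translates, and your key step --- ``the limit support $S_\infty$ is a special subvariety strictly containing infinitely many $S_n$'' --- simply fails for an infinite sequence of torsion subvarieties in a single CM fiber (that case is Manin--Mumford, which is not accessible by this method). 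This is precisely why the paper introduces the notion of $\rho$-rigidity (Definition \ref{rho-rigid-c-special}) and why Lemma \ref{injectivity} requires it: it is the hypothesis under which special subdata can be recovered from their lattice subspaces, making the Mozes--Shah limit a genuine special subdatum again. Your proposal never isolates this condition, and without it the passage from ``homogeneous limit measure'' back to ``special subvariety'' breaks down. Where your argument does work --- sequences of subvarieties whose defining groups have a common connected center $\Cbf$ and act on $\Vbf$ without trivial subrepresentations of the derived group --- it coincides with the paper's Sections 3--4 (Mozes--Shah on the lattice space, then Dani--Margulis-type nondivergence and the orbit-map comparison $\mu_n=\kappa_{y_n*}\nu_n$ to descend to the Kuga variety), so in that regime you have rediscovered the paper's route, but only for the restricted class of subvarieties for which the paper itself claims a theorem.
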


In the literature the conjecture is also reformulated in terms of Zariski closure, such as

(\emph{Form-1}) let $(M_n) $ be a sequence of special subvarieties in $M$, then the Zariski closure of $\bigcup_nM_n$ is a finite union of special subvarieties.

and

(\emph{Form-2}) if $(M_n) $ is a sequence of special subvarieties of $M$, strict in the sense that for any $M'\subsetneq M$ special subvariety we have $M_n\nsubseteq M'$ for $n$ large enough, then $\bigcup_nM_n$ is Zariski dense in $M$.
\bigskip

B.Klingler, E.Ullmo, and A.Yafaev have proved the conjecture for pure Shimura varieties assuming the Generalized Riemann Hypothesis, cf. \cite{clozel-ullmo}, \cite{ullmo-yafaev}, \cite{klingler-yafaev}; see also \cite{noot-bourbaki} and \cite{yafaev-bordeaux} for surveys of their works. Their proof mainly consists of two ingredients:

(1) Equidistribution of $\Cbf$-special subvarieties, proved in \cite{clozel-ullmo} and \cite{ullmo-yafaev};

(2) Estimation of the intersection degrees of Hecke correspondences and of Galois orbits of special subvarieties cf. \cite{klingler-yafaev}, and lower bound of Galois orbits of special subvarieties, cf. \cite{ullmo-yafaev}, both inspired by previous works of B.Edixhoven and A.Yafaev, cf.\cite{edixhoven-yafaev}. These estimations depend on the Generalized Riemann Hypothesis. 

The mixed case of the conjecture has aroused interests since \cite{andre-note} (in the form of the universal elliptic curve), and is closely related to the Manin-Mumford conjecture, as suggested in \cite{pink-combination}. The current work is aimed at the equidistribution of $\Cbf$-special subvarieties for Kuga varieties, which is a class of mixed Shimura varieties modeled on the universal family of abelian varieties over pure Shimura varieties. More technical restrictions appear when we transfer the ergodic arguments in \cite{clozel-ullmo}, which leads to our notion of $\rho$-rigidity of $\Cbf$-special subvarieties. With this condition assumed, the input from  the work of S.Mozes and N.Shah \cite{mozes-shah}  can be directly applied in our setting.

The paper is organized as follows:

(1) Section 1 starts with some generalities on mixed Shimura data, and then concentrates on the notions of Kuga data, connected Kuga varieties, and their special subvarieties;   we also define relevant measure-theoretic objects, such as lattice spaces associated to them and  canonical measures on lattice spaces and special subvarieties,   so as to allow further inputs from ergodic theory.

Roughly speaking, a Kuga variety is the universal family of abelian varieties $M$ over a moduli space $S$, with $S$ a suitable pure Shimura variety. Write $f:M\ra S$ for the structure map defining the abelian scheme over $S$, then a special subvariety in $M$ is obtained as a torsion $S'$-subscheme $M''$ of some $M'\ra S'$, where $S'$ is a pure special subvariety of $S$ and $M'\ra S'$ the abelian $S'$-scheme pulled back from $f$; here torsion $S'$-subscheme is analogue to the notion of torsion subvarieties in abelian varieties, namely abelian subvarieties translated by torsion points.

(2) Section 2 defines the notions of $\Cbf$-special sub-objects and $\rho$-rigidity, and  the main results are stated, both for   lattice spaces and for Kuga varieties:

%0.2
\begin{theorem}\label{main-theorem}
  Let S be either a connected Kuga variety or its associated lattice space, defined by some connected Kuga datum $(\Pbf,Y;Y^+)$. Let $\pi:(\Pbf,Y;Y^+)\ra(\Gbf,X;X^+)$ be the canonical projection onto the pure base, and fix $\Cbf$ a $\Qbb$-torus in $\Gbf$. Denote by $\Pscr'(S)$ the set of \rhorig $\Cbf$-special measures on $S$. Then $\Pscr'(S)$ is compact for the weak topology, and it satisfies the property of "support convergence": if $(\nu_n) $ is a   sequence in $\Pscr'(S)$ converging to $\nu$, then for some $n$ large enough,  $\Supp\mu_n$ is contained in $\Supp\mu$, and $\Supp\mu$ equals the closure of $\bigcup_{n> 0}\Supp\mu_n$ for the archimedean topology.
\end{theorem}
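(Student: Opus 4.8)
The plan is to deduce both assertions from the Mozes--Shah compactness theorem (\cite{mozes-shah}) applied to the lattice space attached to $(\Pbf,Y;Y^+)$, the role of $\rho$-rigidity being to ensure that the hypotheses of that theorem hold, and persist under limits, along the whole family. First I would reduce the statement for a connected Kuga variety to that for its associated lattice space: the lattice space has the form $\Gamma\bsh\Pbf(\Rbb)^+$ for a suitable arithmetic subgroup $\Gamma$, and the uniformization onto the Kuga variety carries canonical measures to canonical measures; since weak convergence and inclusions of supports are preserved under the corresponding pushforward, it suffices to argue on the lattice space.

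Next I would recast the data in terms of homogeneous dynamics. By construction each \rhorig $\Cbf$-special measure is the homogeneous probability measure carried by an orbit $\Hbf(\Rbb)^+\cdot x$, where $\Hbf\subseteq\Pbf$ is an algebraic subgroup containing both a piece of the unipotent radical of $\Pbf$ (the fibre direction of the Kuga fibration) and a semisimple part whose non-compact factors supply $\Ad$-unipotent one-parameter subgroups. The precise content of $\rho$-rigidity is that such a measure is invariant and ergodic under an $\Ad$-unipotent one-parameter flow, so that it belongs to the class of measures classified by Ratner and governed by Mozes--Shah.

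Granting this, the Mozes--Shah theorem shows that any sequence $(\mu_n)$ of such measures has a weakly convergent subsequence whose limit $\mu$ is again homogeneous, carried by an orbit $\Hbf_\infty(\Rbb)^+\cdot x_\infty$; moreover, if $\mu_n\to\mu$ it provides the group-theoretic conclusions that $\Hbf_n\subseteq\Hbf_\infty$ for $n$ large and that $\Hbf_\infty$ is generated by the $\Hbf_n$. The subsequence statement yields the compactness of $\Pscr'(S)$ once one knows the limit lies in $\Pscr'(S)$, while translating the group inclusions into inclusions of orbits yields the asserted support convergence.

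The main obstacle is exactly this last point, since in its raw form the Mozes--Shah theorem permits the limit to be the zero measure (escape of mass into the cusps) and does not by itself guarantee that $\mu$ is \rhorig $\Cbf$-special. To exclude escape of mass I would exploit that every $\mu_n$ is $\Cbf$-special: the fixed $\Qbb$-torus $\Cbf\subseteq\Gbf$ forces the projections $\pi(\Hbf_n)$ to contain a common conjugate of $\Cbf$, and a Dani--Margulis non-divergence estimate for unipotent flows then keeps the total mass uniformly away from infinity, so that $\mu$ is a probability measure. The genuinely new difficulty relative to the pure case of \cite{clozel-ullmo} and \cite{ullmo-yafaev} is the unipotent radical of $\Pbf$; it is controlled precisely by $\rho$-rigidity, whose stability under passage to the limit I would verify so as to conclude that $\mu$ is again \rhorig $\Cbf$-special. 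Once the non-divergence estimate and this limit-stability are in place, both the compactness of $\Pscr'(S)$ and the support-convergence property follow formally from the Mozes--Shah dichotomy.
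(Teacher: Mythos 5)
Your overall architecture --- Mozes--Shah on the lattice space, then upgrading the limit measure to a \rhorig $\Cbf$-special one --- is indeed the paper's, but two of your steps conceal genuine gaps, and they are exactly where the paper does its work.

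First, the reduction from the connected Kuga variety $M=\Gamma\bsh Y^+$ to its lattice space $\Omega=\Gamma^\dag\bsh\Pbf^\der(\Rbb)^+$ is not a formal pushforward, because there is no single map $\Omega\ra M$ carrying every special measure on $\Omega$ to the corresponding special measure on $M$. The canonical measure of a special subvariety $M_n=\wp_\Gamma(Y_n^+)$ is $\kappa_{y_n*}\nu_n$ for an orbit map $\kappa_{y_n}$ based at a point $y_n\in Y_n^+$ (cf.\ \ref{kuga-lattice-space}, \ref{measure-compatibility}); for a base point $y\notin Y_n^+$ the pushforward $\kappa_{y*}\nu_n$ is carried by the translated orbit $\wp_\Gamma(\Pbf_n^\der(\Rbb)^+y)$, which is neither $M_n$ nor special. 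So weak convergence $\nu_n\to\nu$ on $\Omega$ says nothing about the measures $\mu_n$ on $M$ until the base points are controlled. The paper's Section 4 exists precisely for this: one needs a compact set $K(\Cbf,M)$, lifted from the pure-case construction of Clozel--Ullmo (this is where the Dani--Margulis result actually enters the argument, not in non-divergence on $\Omega$), such that each datum can be chosen with $Y_n^+\cap K(\Cbf,M)\neq\emptyset$; one then extracts a convergent subsequence $y_n\to y$ and proves $\kappa_{y_n*}\nu_n\to\kappa_{y*}\nu$ by uniform convergence of orbit maps on compacta (Proposition \ref{compactness}). Your proposal contains none of this, and without it the statement for $M$ does not follow from the statement for $\Omega$.

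Second, your treatment of the limit on $\Omega$ is a placeholder where the paper has its main argument. Note that $\rho$-rigidity is not what makes Mozes--Shah applicable: every canonical measure of a lattice subspace of type $\Hscr$ lies in the compact, support-convergent class $\Pscr_h(\Omega)$ of Theorem \ref{mozes-shah}, and non-divergence is built into that statement for $\Qbb$-subgroups of type $\Hscr$; your appeal to ``a common conjugate of $\Cbf$ inside $\pi(\Hbf_n)$'' is also off, since the groups defining the lattice measures are the derived groups $\Pbf_n^\der$, whose pure parts $\Gbf_n^\der$ are centralized by $\Cbf$ but do not contain it. What actually has to be proved is that the limit $\nu\in\Pscr_h(\Omega)$, a priori attached only to some $\Qbb$-subgroup $\Pbf'\subset\Pbf^\der$ of type $\Hscr$, is the canonical measure of a \rhorig $\Cbf$-special Kuga subdatum. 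This requires: identifying $\pi(\Pbf')$ as the group generated by the $\Gbf_n^\der$ and invoking the pure-case results of Ullmo--Yafaev so that $\Cbf\pi(\Pbf')$ underlies a $\Cbf$-special pure subdatum; writing $\Pbf_n^\der=\Vbf_n\rtimes(v_n\Gbf_n^\der v_n^\inv)$ and using $\rho$-rigidity, exactly as in Lemma \ref{injectivity}, to force $u-v_n\in\Vbf'(\Qbb)$, so that the unipotent translations are pinned down and the products $\Pbf_\nu=\Pbf'\Pbf_n$ give one $\Qbb$-group independent of $n$; and finally checking that $(\Pbf_\nu,\Pbf_\nu(\Rbb)Y_n)$ is a Kuga subdatum inducing $\nu$. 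This is where $\rho$-rigidity does its real job --- without it the limit lattice subspace cannot remember which torsion translate produced it, as the CM-fiber example \ref{example} shows --- and the phrase ``verify stability under passage to the limit'' does not engage with any of it.
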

 
From the main theorem one can deduce a special case of the Andr\'e-Oort-Pink conjecture and the equidistribution conjecture for Kuga varieties in the following form:

%0.3
\begin{corollary}\label{main-corollary} Let  $M$ be a connected Kuga variety, and $\Cbf$ a fixed $\Qbb$-torus in $\Gbf$, following the notions in the theorem above. Take $Z\subset M$ any closed subset for the archimedean topology. Denote by $\Sigma'(Z)$ the set of   \rhorig $\Cbf$-special varieties of $M$ that are contained in $Z$, and by $\Sigma'_\maxx(Z)$ the subset of maximal elements in $\Sigma'(Z)$ for the inclusion order. Then $\Sigma'_\maxx(Z)$ is always finite.
 \end{corollary}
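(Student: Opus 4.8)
The plan is to deduce the corollary from Theorem~\ref{main-theorem} by a compactness-and-contradiction argument applied with $S=M$; no further geometric input should be needed beyond the dictionary between \rhorig $\Cbf$-special subvarieties and their canonical measures. First I would record this dictionary: to each \rhorig $\Cbf$-special subvariety $M'\subseteq M$ one attaches its canonical probability measure $\mu_{M'}\in\Pscr'(M)$, and conversely the support of any member of $\Pscr'(M)$ is a \rhorig $\Cbf$-special subvariety. The formal properties I need are that this assignment is injective---so that $\Supp\mu_{M'}=M'$ recovers the subvariety---and that inclusion of supports matches inclusion of subvarieties. Both should be immediate from the constructions of Sections~1 and~2.

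Now suppose, for contradiction, that $\Sigma'_\maxx(Z)$ is infinite. Then I may choose an infinite sequence $(M_n)_{n>0}$ of pairwise distinct elements of $\Sigma'_\maxx(Z)$ and set $\mu_n:=\mu_{M_n}\in\Pscr'(M)$; by injectivity of the dictionary the $\mu_n$ are pairwise distinct. Since $\Pscr'(M)$ is compact for the weak topology by Theorem~\ref{main-theorem}, and since the space of probability measures on a locally compact, second countable Kuga variety is metrizable for that topology, the sequence $(\mu_n)$ admits a convergent subsequence; after relabelling I assume $\mu_n\to\mu$, with $\mu\in\Pscr'(M)$ because the set is compact hence closed. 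Put $N:=\Supp\mu$, which is a \rhorig $\Cbf$-special subvariety.

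Next I would invoke the ``support convergence'' clause of Theorem~\ref{main-theorem}, which yields $N=\overline{\bigcup_{n>0}\Supp\mu_n}=\overline{\bigcup_{n>0}M_n}$ for the archimedean topology. Because every $M_n$ lies in the archimedean-closed set $Z$, this identity forces $N\subseteq Z$; together with the fact that $N$ is \rhorig $\Cbf$-special it follows that $N\in\Sigma'(Z)$. Moreover each $M_n\subseteq\bigcup_{k>0}M_k\subseteq N$ directly from the closure identity, so for every $n$ we have $M_n\subseteq N$ with both $M_n,N\in\Sigma'(Z)$. Maximality of $M_n$ then forces $M_n=N$ for all $n$, whereas the $M_n$ were chosen pairwise distinct---a contradiction. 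Hence $\Sigma'_\maxx(Z)$ is finite. Note this runs through regardless of whether the clause $\Supp\mu_n\subseteq\Supp\mu$ is read for one or for all large $n$, since the containment $M_n\subseteq N$ already comes from the closure statement.

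The serious content is entirely in Theorem~\ref{main-theorem}; the present deduction is a soft pigeonhole argument. The one point requiring care---and the step I would treat as the main obstacle here---is the verification that the correspondence between \rhorig $\Cbf$-special subvarieties and measures in $\Pscr'(M)$ is a genuine injection whose image recovers supports, so that an infinite family of distinct maximal subvarieties really does produce an infinite family of distinct measures to which compactness and support convergence may be applied. Everything else is formal.
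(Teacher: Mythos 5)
Your proposal is correct and takes essentially the same route as the paper: pick an infinite sequence of distinct maximal elements, pass to a weakly convergent subsequence of canonical measures using compactness of $\Pscr'(M)$, and play the support-convergence identity against maximality. The only cosmetic difference is where the contradiction lands --- you force $M_n=\Supp\mu$ for all $n$, contradicting distinctness, whereas the paper derives $\Supp\mu\nsubseteq Z$ from maximality and $\Supp\mu\subseteq Z$ from closedness of $Z$; these are the same pigeonhole argument.
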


Note that the same statement holds when replacing $M$ by the associated lattice space, since it only involves the archimedean topology.

%The main results in [U-2] are also reviewed in this section.

(3) Section 3 is concerned with the proof for the case of lattice spaces, which relies on a theorem of S.Mozes and N.Shah.

(4) In Section 4 we pass from lattice spaces to Kuga varieties, where we follow the same arguments as used in \cite{clozel-ullmo}, relying on a result of S.Dani and G.Margulis.
 
Note that the current treatment requires little knowledge of the algebraic structure of Kuga varieties. The main theorem is simply an application of some ideas in ergodic theory to a special class of locally symmetric manifold.

\subsection*{Acknowledgement} This paper grows out of author's thesis. He thanks his advisor Prof. Emmanuel Ullmo for guiding him into this subject with great care and patience, without which the paper wouldn't have existed. He thanks Prof. Stefan M\"uller-Stach and Prof. Kang Zuo for their kind help and interest in this work. He also thanks the referee for a careful reading of the manuscript and providing many useful suggestions. The work is partially supported by the project SFB/TR45 ''Periods, moduli spaces, and arithmetic of algebraic varieties''. 
\subsection*{Convention}
Over a base field $k$, a linear $k$-group $\Hbf$ is understood as a smooth affine algebraic $k$-group, and $Z_\Hbf$ resp.
   $Z^\circ\Hbf$ is the center resp. connected center of $\Hbf$.  A $k$-factor of a reductive $k$-group is   a minimal normal non-commutative $k$-subgroup (of dimension $> 0$), i.e. a non-commutative $k$-simple normal $k$-subgroup.

Write $\Sbb$   for the Deligne torus $\Res_{\Cbb/\Rbb}\Gbb_\mrm$, and $\ibf$  a fixed square root of -1 in $\Cbb$. The set of finite adeles is denoted by $\adele$. 

A linear $\Qbb$-group $\Hbf$ is compact if $\Hbf(\Rbb)$ is a compact Lie group. Subscripts and superscripts such as ${}^\circ$, ${}^+$, ${}_+$ follow the convention of P.Delinge in \cite{deligne-pspm}. 

For a real or complex manifold, its archimedean topology is the one locally deduced from the archimedean metric on $\Rbb^n$ or $\Cbb^m$.

\section{Preliminaries}

For $\Hbf$ a linear $\Qbb$-group, we put $\Xfrak(\Hbf):=\Hom_{\Group/\Rbb}(\Sbb,\Hbf_\Rbb)$ resp. $\Yfrak(\Pbf):=\Hom_{\Group/\Cbb}(\Sbb_\Cbb,\Hbf_\Cbb)$, on which $\Hbf(\Rbb)$ resp. $\Hbf(\Cbb)$ acts from the left by conjugation.

Recall the definition of general mixed Shimura data in \cite{pink-thesis} 2.1:

\begin{definition} \label{mixed-shimura-datum}

(1) A \emph{mixed Shimura datum} is a triple $(\Pbf,\Ubf,Y\ot{h}\ra\Yfrak(\Pbf))$, where $\Pbf$ is a connected linear $\Qbb$-group, $\Ubf$ a connected unipotent normal $\Qbb$-subgroup of $\Pbf$, $Y$ a homogeneous space under $\Pbf(\Rbb)\Ubf(\Cbb)$, $h$ a $\Pbf(\Rbb)\Ubf(\Cbb)$-equivariant map with finite fibers, such that for any $y\in h(Y)$:

(MS-1) the composition $\Sbb_\Cbb\ot{y}\lra\Pbf_\Cbb\lra(\Pbf/\Ubf)_\Cbb$ is defined over $\Rbb$;

(MS-2) the composition $\Sbb_\Cbb\ot{y}\lra\Pbf_\Cbb\ot{\Ad}\lra\GLbf_\Cbb(\pfrak_\Cbb)$ defines a rational mixed Hodge structure on $\pfrak=\Lie\Pbf$ of type $\{(-1,-1),(-1,0),(0,-1),(-1,1),(0,0),(1,-1)\}$, with rational weight filtration $W_{-3}=0$, $W_{-2}=\Lie\Ubf$, $W_{-1}=\Lie\Wbf$, and $W_0=\pfrak$; here $\Wbf$ is the unipotent radical of $\Pbf$, namely the maximal connected normal unipotent $\Qbb$-subgroup of $\Pbf$;

(MS-3) The conjugation $\Int(y(\ibf))$ induces a Cartan involution on $(\Pbf/\Wbf)^\ad$, and $(\Pbf/\Wbf)^\ad$ admits no compact $\Qbb$-factors;

(MS-4) Take a rational Levi decomposition $\Pbf=\Wbf\rtimes\Gbf$, and write $\rho$ for the conjugate action of $\Gbf$ on $\Wbf$, then the connected center $\Cbf_\Gbf$ of $\Gbf$ acts on $\Wbf$ through a $\Qbb$-torus isogeneous to a product $\Hbf\times\Gbb_\mrm^d$ with $\Hbf$ some compact $\Qbb$-torus;

(MS-5) $(\Pbf,Y)$ is irreducible in the sense of \cite{pink-thesis} 2.13, i.e. if $\Pbf'\subsetneq\Pbf$ is a $\Qbb$-subgroup, then there exists $y\in\Pbf$ such that $y(\Sbb_\Cbb)\nsubseteq\Pbf'_\Cbb$. We will also refer to $\Pbf$ as the Mumford-Tate group of $Y$ or of the mxied Shimura datum.

We usually write $(\Pbf,\Ubf,Y)$ for mixed Shimura data and denote by $h$ the map $Y\ra\Yfrak(\Pbf)$.

(2) A \emph{pure Shimura datum} is a mixed Shimura datum $(\Pbf,\Ubf,Y)$ as above such that $\Pbf$ is reductive. In this case $\Ubf$ is necessarily trivial, and the space $Y=X$ is a disjoint union of connected Hermitian symmetric spaces of non-compact type, which is proved in \cite{deligne-pspm}.

 A toric Shimura datum is a pure Shimura datum $(\Gbf,X)$ such that $\Gbf$ is a $\Qbb$-torus (hence $X$ is a finite set).
\end{definition}

\begin{remark}
(1) Note that the pure Shimura data in the sense of Pink differ slightly from Deligne's definition in \cite{deligne-pspm}: a pure Shimura datum of Pink is a pair $(\Gbf,X\ot{h}\ra\Xfrak(\Gbf))$ where $X$ a homogeneous space under $\Gbf(\Rbb)$, $h:X\ra \Xfrak(\Gbf)$ is $\Gbf(\Rbb)$-equivariant with finite fibers, such that $(\Gbf,h(X))$ is a pure Shimura datum in the sense of Deligne.

For a general mixed Shimura datum $(\Pbf,\Ubf,Y)$, the spaces $Y$ and $h(Y)$ are complex manifolds, and the morphism $h:Y\ra h(Y)(\subset\Yfrak(\Pbf))$ is an isomorphism when restricted to each connected component of $Y$, cf.\cite{pink-thesis} 2.12. Later we will be mainly concerned with connected Kuga varieties, and both of the two definitions give the same result.

(2) In the definition of mixed Shimura data, the condition (MS-4) is imposed mainly to simplify certain constructions in the study of canonical models of mixed Shimura varieties (over their reflex fields) and constructions of variations of rational mixed Hodge structures with rational weights. In the present paper it is not essentially involved, as we only treat mixed Shimura varieties (in fact mainly Kuga varieties) as complex analytic spaces.

\end{remark}

\begin{definition}\label{shimura-datum-morphism}
(1) \emph{Morphisms} between mixed Shimura data are defined in the evident way: they are of the form $(f,f_*):(\Pbf,\Ubf,Y)\ra(\Pbf',\Ubf',Y')$ where $f:\Pbf\ra\Pbf'$ is a homomorphism of $\Qbb$-groups, with $f(\Ubf)\subset\Ubf'$, and $f_*:Y\ra Y'$ a map between complex manifolds, equivariant \wrt $f:\Pbf(\Rbb)\Ubf(\Cbb)\ra\Pbf'(\Rbb)\Ubf'(\Cbb)$, which fits into the commutative diagram below: $$\xymatrix{
Y \ar[d]_h \ar[r]_{f_*} & Y' \ar[d]_h \\ {\Yfrak(\Pbf)} \ar[r]_{f_*} & {\Yfrak(\Pbf')}}$$ where the lower horizontal $f_*$ is the push-forward $y\mapsto f\circ y$.

A \emph{mixed Shimura subdatum} $(\Pbf,\Ubf,Y)$ of $(\Pbf',\Ubf',Y')$ is a pair $(f,f_*)$ as above such that $f$ is an inclusion of $\Qbb$-subgroup and $f_*$ is injective.

(2) Let $(\Pbf,\Ubf,Y)$ be a mixed Shimura datum, and $\Nbf$ a normal $\Qbb$-subgroup of $\Pbf$, with $\pi_\Nbf:\Pbf\ra\Pbf/\Nbf=:\Pbf'$ the quotient by $\Nbf$. Then the \emph{quotient} of $(\Pbf,\Ubf,Y)$ by $\Nbf$ is $(\Pbf',\Ubf'=\pi_\Nbf(\Ubf),Y')$ with $Y'=\Pbf'(\Rbb)\Ubf'(\Cbb)/\pi_\Nbf(\Stab_{\Pbf(\Rbb)\Ubf(\Cbb)}y)$ for some $y\in Y$. It is universal in the sense that if $(f,f_*):(\Pbf,\Ubf,Y)\ra(\Pbf_1,\Ubf_1,Y_1)$ is a morphism of mixed Shimura data such that $\Nbf$ is annihilated under the homomorphism $f:\Pbf\ra\Pbf_1$, then $(f,f^*)$ admits a unique factorization  $(\Pbf,\Ubf,Y)\overset{\pi_\Nbf}\ra(\Pbf',\Ubf',Y')\ra(\Pbf_1,\Ubf_1,Y_1)$.

In particular, for $\Wbf$ the unipotent radical of $\Pbf$, the quotient of $(\Pbf,\Ubf,Y)$ by $\Wbf$ is a pure Shimura datum $(\Gbf,X)$, which is the \emph{maximal pure quotient} of $(\Pbf,\Ubf,Y)$.

(4) Let $(\Pbf_i,\Ubf_i,Y_i)$ be mixed Shimura data ($i=1,2$), then one can form their \emph{product} in the evident way: $(\Pbf_1\times\Pbf_2,\Ubf_1\times\Ubf_2,Y_1\times Y_2)$.

\end{definition}

We present some general facts about mixed Shimura data.

\begin{proposition} \label{properties} Let $(\Pbf,\Ubf,Y)$ be a mixed Shimura datum, with $\Wbf$ the unipotent radical of $\Pbf$, and $\Pbf=\Wbf\rtimes\Gbf$ a fixed rational Levi decomposition of $\Pbf$.

(1) The Lie group $\Pbf(\Rbb)\Ubf(\Cbb)$ acts on the complex manifold $Y$ continuously by automorphisms. For any irreducible finite-dimensional $\Qbb$-representation $(\Mbf,\rho_\Mbf)$ of $\Pbf$ pure of some weight $n$, there exists a $\Pbf$-equivariant bilinear form $\psi_\Mbf:\Mbf\otimes\Mbf\ra\Qbb(-n)$ such that for any $y\in Y$, the mixed Hodge structure $(\Mbf,\rho_\Mbf\circ h(y))$ is polarized by $\pm\psi_\Mbf$, and the constant local system $\Mbf\times Y$ underlies a polarizable variation of mixed Hodge structures.

If $(f,f_*):(\Pbf,\Ubf,Y)\ra(\Pbf',\Ubf',Y')$ is a morphism of mxied Shimura data, then $f_*:Y\ra Y'$ is a holomorphic map  between complex manifolds. It is a closed immersion when $(\Pbf,\Ubf,Y)$ is a subdatum of $(\Pbf',\Ubf',Y')$. %In particular, if $(\Pbf,\Ubf,Y)\ra(\Gbf,X)$ is the projection onto the maximal pure quotient, then $Y\ra X$

(2) Let $(\Pbf,\Ubf,Y)$ be a mixed Shimura datum, with maximal pure quotient $(\Gbf,X)$. Then $\Gbf$ equals Mumford-Tate group of $X$, namely for any $\Qbb$-subgroup $\Hbf\subsetneq\Gbf$, there exists $x\in h(X)$ such that $x(\Sbb_\Cbb)\nsubseteq\Hbf_\Cbb$.

\end{proposition}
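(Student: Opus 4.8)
The plan is to deduce the statement for the maximal pure quotient $(\Gbf,X)$ from the irreducibility hypothesis (MS-5) on the original datum $(\Pbf,\Ubf,Y)$, by pulling proper $\Qbb$-subgroups of $\Gbf$ back along the quotient map. Write $\pi:\Pbf\ra\Gbf=\Pbf/\Wbf$ for the canonical projection, with kernel $\Wbf$; by Definition \ref{shimura-datum-morphism}(2) the maximal pure quotient is realized by the morphism $(\pi,\pi_*):(\Pbf,\Ubf,Y)\ra(\Gbf,X)$, and the commutative square of Definition \ref{shimura-datum-morphism}(1) shows that on period maps the pushforward is $\pi_*:h(Y)\ra h(X)$, $y\mapsto\pi_\Cbb\circ y$.

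Given a proper $\Qbb$-subgroup $\Hbf\subsetneq\Gbf$, I would first form its preimage $\Pbf':=\pi^{-1}(\Hbf)$, a $\Qbb$-subgroup of $\Pbf$ containing $\Ker\pi=\Wbf$ and satisfying $\pi(\Pbf')=\Hbf$. The decisive observation is that $\Pbf'$ is again proper: if $\Pbf'=\Pbf$ then $\Gbf=\Pbf/\Wbf=\Pbf'/\Wbf=\Hbf$, contradicting $\Hbf\subsetneq\Gbf$. Since $\Pbf'$ is the fibre product $\Pbf\times_\Gbf\Hbf$ and base change preserves fibre products, taking preimages commutes with complexification, so that $\Pbf'_\Cbb=\pi_\Cbb^{-1}(\Hbf_\Cbb)$.

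Now I would apply (MS-5) to the proper subgroup $\Pbf'\subsetneq\Pbf$: there exists $y\in h(Y)$ with $y(\Sbb_\Cbb)\nsubseteq\Pbf'_\Cbb$. Put $x:=\pi_*(y)=\pi_\Cbb\circ y\in h(X)$. If one had $x(\Sbb_\Cbb)\subseteq\Hbf_\Cbb$, then $y(\Sbb_\Cbb)\subseteq\pi_\Cbb^{-1}(\Hbf_\Cbb)=\Pbf'_\Cbb$, contradicting the choice of $y$; hence $x(\Sbb_\Cbb)\nsubseteq\Hbf_\Cbb$. As $\Hbf$ was an arbitrary proper $\Qbb$-subgroup, this proves that $\Gbf$ is the Mumford--Tate group of $X$. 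The argument is formal and I anticipate no genuine difficulty; the only points demanding care are the compatibility of preimage with base change to $\Cbb$ (so the containment over $\Gbf$ can be read off from the one over $\Pbf$) and the identification of $\pi_*$ on period maps with the structural projection $Y\ra X$ of Definition \ref{shimura-datum-morphism}(2).
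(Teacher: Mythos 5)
Your argument for part (2) is correct and is essentially the paper's own proof: the paper likewise takes the preimage $\Pbf'=\pi_\Wbf^{-1}(\Gbf')$ of a proper $\Qbb$-subgroup of $\Gbf$ and invokes (MS-5) for $(\Pbf,\Ubf,Y)$, only phrased contrapositively rather than directly. (Part (1) is merely cited from Pink's thesis in the paper, so your omission of it is consistent with the source.)
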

\begin{proof}

(1): cf.\cite{pink-thesis} 2.4

(2): if $(\Gbf,X)$ does not satisfy (MS-5), then there exists a $\Qbb$-subgroup $\Gbf'\subsetneq\Gbf$ such that $x(\Sbb)\subset\Gbf'_\Rbb$ for all $x\in h(X)$. Take $\Pbf'=\pi_\Wbf^\inv(\Gbf')$ we see that for all $y\in h(Y)$ we have $y(\Sbb_\Cbb)\subset\Pbf'_\Cbb$.
\end{proof}

We summarize some constructions of Pink as the following:.

\begin{lemma}\label{reconstruction}[cf.\cite{pink-thesis} 2.15, 2.16, 2.18. 2.21]

The quotient $\Vbf=\Wbf/\Ubf$ is a vectorial $\Qbb$-group, which we identify with its Lie algebra. For $\rho$ the action of $\Gbf$ on $\Wbf$, we get algebraic representations $\rho_\Vbf:\Gbf\ra\GLbf_\Qbb(\Vbf)$ and $\rho_\Ubf:\Gbf\ra\GLbf_\Qbb(\Ubf)$, such that for any $x\in h(X)$, we have rational Hodge structures $(\Vbf,\rho_\Vbf\circ x)$ of type $\{(-1,0),(0,-1)\}$ and $(\Ubf,\rho_\Ubf\circ x)$ of type $\{(-1,-1)\}$. $\Wbf$ is a central extension of $\Vbf$ by $\Ubf$ (as unipotent $\Qbb$-groups), and the extension is uniquely given by an alternating bilinear map $\psi_\Wbf:\Vbf\times\Vbf\ra\Ubf$, equivariant \wrt the action of $\rho_\Vbf$ and $\rho_\Ubf$. Moreover $\Gbf$ acts on $\Ubf$ through a split $\Qbb$-torus.

 Conversely, if we are given a pure Shimura datum $(\Gbf,X)$, two algebraic $\Qbb$-representations $\rho_\Vbf:\Gbf\ra\GLbf_\Qbb(\Vbf)$, $\rho_\Ubf:\Gbf\ra\GLbf_\Qbb(\Ubf)$  with the connected center of $\Gbf$ acting through $\Qbb$-tori of the form prescribed in (MS-4), plus a $\Gbf$-equivariant alternating bilinear map $\psi:\Vbf\times\Vbf\ra\Ubf$, giving rise to a central extension $\Wbf$ of $\Vbf$ by $\Ubf$, such that for any $x\in h(X)$, we have rational pure structures $(\Vbf,\rho_\Vbf\circ x)$ of type $\{(-1,0),(0,-1)\}$ and $(\Ubf,\rho_\Ubf\circ x)$ of type $\{(-1,-1)\}$, then by putting $\Pbf=\Wbf\rtimes_\rho\Gbf$, $\rho$ standing for the action of $\Gbf$ on $\Wbf$ obtained from $\rho_\Vbf$, $\rho_\Ubf$ and $\psi$,  and $Y=\Ubf(\Cbb)\Wbf(\Rbb)\times X$, on which $\Pbf(\Rbb)\Ubf(\Cbb)$ acts by the formular $$(w,g)(w',x)=(wg(w'),gx),$$ with $g(w')=\rho(g)(w')$, we get a mixed Shiimura datum $(\Pbf,\Ubf,Y)$.
\end{lemma}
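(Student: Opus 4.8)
This statement assembles several results of \cite{pink-thesis}, and the plan is to prove its two directions separately. Throughout I identify each unipotent $\Qbb$-group appearing with its Lie algebra via $\exp$ (an isomorphism of $\Qbb$-varieties, since $\Wbf$ is at most $2$-step nilpotent), and I use the fixed Levi splitting to write $\pfrak=\mathfrak u\oplus\mathfrak v\oplus\mathfrak g$ as a $\Gbf$-module, where $\mathfrak u=\Lie\Ubf$, $\mathfrak v=\Lie\Vbf$, $\mathfrak g=\Lie\Gbf$.

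For the forward direction I would read everything off from axiom (MS-2). The prescribed rational weight filtration $W_{-2}=\mathfrak u$, $W_{-1}=\Lie\Wbf$, $W_0=\pfrak$ identifies $\Ubf$ with the weight $-2$ part and $\Vbf=\Wbf/\Ubf$ with $\Gr^W_{-1}$. Since a bracket of two weight $-1$ elements has weight $\le -2$ and $\Wbf/\Ubf$ is abelian, $\Ubf$ is central in $\Wbf$; hence $\Wbf$ is a central extension of $\Vbf$ by $\Ubf$ whose commutator descends to an alternating $\Gbf$-equivariant form $\psi_\Wbf:\Vbf\times\Vbf\ra\Ubf$. Restricting the Hodge type list of (MS-2) to the graded pieces gives type $\{(-1,0),(0,-1)\}$ on $\Vbf$ and $\{(-1,-1)\}$ on $\Ubf$. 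The only non-formal point is that $\Gbf$ acts on $\Ubf$ through a split $\Qbb$-torus: for each $x\in h(X)$ the module $\Ubf$ is of pure Tate type $(-1,-1)$, so $\rho_\Ubf\circ x$ is the scalar action by the norm character $\Sbb\ra\Gbb_\mrm$; as $\Gbf$ is the Mumford--Tate group of $X$ (Proposition \ref{properties}), the $\Qbb$-subgroup of $\Gbf$ acting by scalars on $\Ubf$ contains every $x(\Sbb)$ and hence is all of $\Gbf$, so $\rho_\Ubf$ factors through the split torus $\Gbb_\mrm$ and $\Gbf^\der$ acts trivially.

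For the converse I would first build $\Wbf$ explicitly as the Heisenberg-type central extension with underlying variety $\Ubf\times\Vbf$ and multiplication $(u_1,v_1)(u_2,v_2)=(u_1+u_2+\tfrac12\psi(v_1,v_2),\,v_1+v_2)$; the $\Gbf$-equivariance of $\psi$ is exactly what makes $\rho=(\rho_\Ubf,\rho_\Vbf)$ act by group automorphisms, so $\Pbf=\Wbf\rtimes_\rho\Gbf$ is well defined with unipotent radical $\Wbf$. I would then define $h:Y\ra\Yfrak(\Pbf)$ on $Y=\Ubf(\Cbb)\Wbf(\Rbb)\times X$ by $h(w,x)=\Int(w)\circ x$, viewing $x$ as a cocharacter into $\Gbf_\Cbb\subset\Pbf_\Cbb$; equivariance for the prescribed $\Pbf(\Rbb)\Ubf(\Cbb)$-action is a direct check using the semidirect-product relations, and finiteness of the fibers of $h$ reduces to the same property for $(\Gbf,X)$.

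The bulk of the work, and the step I expect to be the real obstacle, is verifying axioms (MS-1)--(MS-5) for $(\Pbf,\Ubf,Y)$, above all (MS-2): that conjugating $x$ by $\Ubf(\Cbb)\Wbf(\Rbb)$ yields a rational mixed Hodge structure on $\pfrak$ of exactly the listed type and with exactly the prescribed weight filtration. Here I would grade $\pfrak=\mathfrak u\oplus\mathfrak v\oplus\mathfrak g$ and argue term by term: the $\mathfrak g$-piece has type $\{(-1,1),(0,0),(1,-1)\}$ by the pure Shimura axioms for $(\Gbf,X)$, the $\mathfrak v$- and $\mathfrak u$-pieces carry the hypothesized types, and each bracket is a morphism of Hodge structures of the correct bidegree, with $\psi$ sending $(-1,0)\otimes(0,-1)$ into $(-1,-1)$ automatically from $\Gbf$-equivariance and weights. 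Then (MS-3) is inherited from $(\Gbf,X)$ because $(\Pbf/\Wbf)^\ad=\Gbf^\ad$, (MS-4) is the standing hypothesis on the connected center, (MS-1) holds since the $\Ubf$-coordinate of $h(y)$ dies in $\Pbf/\Ubf$ while the remaining data are real, and (MS-5) follows from $\Gbf$ being the Mumford--Tate group of $X$. The genuinely delicate check is the rationality of the weight filtration together with the real-structure condition (MS-1); this is where the careful bookkeeping of \cite{pink-thesis} (2.15--2.21) does the work, and I would follow that argument rather than reprove it from scratch.
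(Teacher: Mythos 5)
You should know that the paper contains no proof of this lemma at all: it is introduced with the words ``We summarize some constructions of Pink as the following,'' and the bracketed citation to \cite{pink-thesis} 2.15, 2.16, 2.18, 2.21 stands in place of any argument. So the comparison here is between your reconstruction and Pink's, not anything internal to the paper, and your reconstruction is largely sound. The forward direction is essentially complete: reading the Hodge types off the graded pieces of the weight filtration in (MS-2), obtaining $\psi_\Wbf$ from the commutator of $\Wbf$, and deducing the split-torus statement from the fact that $\Sbb$ acts on a type $(-1,-1)$ structure through the scalar norm character, combined with the Mumford--Tate property of Proposition \ref{properties}(2), is exactly the right argument. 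Two steps nevertheless need tightening. First, the centrality of $\Ubf$ in $\Wbf$ is not a consequence of $\Wbf/\Ubf$ being abelian (that statement is itself the conclusion of your first clause); centrality follows from the bracket respecting the weight filtration, namely $[W_{-2},W_{-1}]\subset W_{-3}=0$ --- the same principle you invoke, but this is the instance that actually does the work. Second, in the converse direction, (MS-5) does \emph{not} follow from the Mumford--Tate property of $X$ alone: a priori a proper subgroup $\Wbf'\rtimes\Gbf$ with $\Wbf'\subsetneq\Wbf$ a $\Gbf$-subrepresentation could contain every $h(y)(\Sbb_\Cbb)$. One rules this out because the weights of any $x\in h(X)$ on $\Lie\Wbf$ are $-1$ and $-2$, so $1-\Ad(x(z))$ is invertible on $\Lie\Wbf$ for generic $z$, and the commutators $w\,x(z)\,w^{-1}x(z)^{-1}$ then generate all of $\Wbf$; only after that does the Mumford--Tate property of $X$ finish the argument by forcing the image in $\Gbf$ to be everything. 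With these two repairs your outline is a faithful reconstruction of Pink's construction, and your deferral of the delicate points (rationality of the weight filtration, (MS-1)) to \cite{pink-thesis} is precisely the posture the paper itself takes, since it defers the entire lemma.
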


\begin{proposition}\label{description-of-subdata}
 Let $(\Pbf,\Ubf,Y)$ be a mixed Shimura datum, with maximal pure quotient $(\Gbf,X)$, and $\Pbf=\Wbf\rtimes\Gbf$ a rational Levi decomposition. Then up to conjugation by $\Wbf(\Rbb)\Ubf(\Cbb)$ we can find $y\in Y$ such that $h(Y):\Sbb_\Cbb\ra\Pbf_\Cbb$ is defined over $\Rbb$ and has image in $\Gbf_\Rbb\subset\Pbf_\Rbb$. By putting $Y_0=\Gbf(\Rbb)y$, we get a maximal pure subdatum $(\Gbf,Y_0)\mono(\Pbf,\Ubf,Y)$, which is also a section to the projection $\pi_\Wbf:(\Pbf,\Ubf,Y)\ra(\Gbf,X)$. Maximal pure subdata of $(\Pbf,\Ubf,Y)$ are of the form $(w\Gbf w^\inv,wY_0)$, with $w$ running through $\Wbf(\Qbb)$. All sections to $\pi_\Wbf$ are of this form, referred to as the pure sections of $\pi_\Wbf$.

\end{proposition}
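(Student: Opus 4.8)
The plan is to first reduce to the explicit model furnished by Lemma~\ref{reconstruction}. Having fixed the rational Levi decomposition $\Pbf=\Wbf\rtimes\Gbf$, the datum $(\Pbf,\Ubf,Y)$ is isomorphic to the reconstructed one, so that $Y=\Ubf(\Cbb)\Wbf(\Rbb)\times X$ with $\Pbf(\Rbb)\Ubf(\Cbb)$ acting by $(w,g)(w',x)=(wg(w'),gx)$. In this model the base points $(1,x)$ have $h$-homomorphism $\Sbb_\Cbb\ra\Pbf_\Cbb$ equal to the composite of the homomorphism $h(x)\colon\Sbb_\Cbb\ra\Gbf_\Cbb$ attached to $x\in X$ (defined over $\Rbb$ with image in $\Gbf_\Rbb$, by Proposition~\ref{properties}) with the Levi inclusion $\Gbf\mono\Pbf$, and these are exactly the points of $Y$ whose homomorphism lands in $\Gbf_\Rbb$, since $w'\Gbf w'^{\inv}=\Gbf$ forces $w'\in\Gbf\cap\Wbf=1$. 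As $\Ubf$ is central in $\Wbf$, the set $\Ubf(\Cbb)\Wbf(\Rbb)$ is a group equal to $\Wbf(\Rbb)\Ubf(\Cbb)$, and it acts on the fibre over $x$ by left translation on the first coordinate; this action is simply transitive, so any $(w',x)$ is carried to $(1,x)$ by $(w')^{\inv}\in\Wbf(\Rbb)\Ubf(\Cbb)$. This gives part~(1). Cohomologically this is the vanishing of $H^1(\Sbb,\Lie\Wbf)$ for the $\Sbb$-action through $h(x)$, forced by the absence of $\Sbb$-invariants in $\Lie\Wbf$, whose Hodge weights are $-1$ and $-2$; the reality constraint on the trivializing element is what dictates the group $\Wbf(\Rbb)\Ubf(\Cbb)$ rather than $\Wbf(\Rbb)$ alone.

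Next I would produce the section. Fixing $y=(1,x_0)$ and setting $Y_0=\Gbf(\Rbb)y=\{(1,x):x\in\Gbf(\Rbb)x_0\}$, the projection $\pi_\Wbf$ restricts to a $\Gbf(\Rbb)$-equivariant bijection $Y_0\ra X$, so that $(\Gbf,Y_0)\isom(\Gbf,X)$; in particular it is a pure Shimura datum, $\Gbf$ being the Mumford--Tate group of $X$ by Proposition~\ref{properties}(2). Because the Levi inclusion $\Gbf\mono\Pbf$ together with $Y_0\mono Y$ is a morphism of mixed Shimura data, which is a closed immersion by Proposition~\ref{properties}(1), the pair $(\Gbf,Y_0)$ is a pure subdatum and a section to $\pi_\Wbf$.

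For parts~(3) and~(4) I would classify all such objects through the conjugacy of Levi subgroups. Any pure subdatum $(\Gbf',Y')$ has reductive Mumford--Tate group $\Gbf'$, and $\Gbf'\cap\Wbf$ is a normal unipotent, hence trivial, subgroup of $\Gbf'$, so $\Gbf'$ injects into $\Pbf/\Wbf=\Gbf$ and embeds into a Levi subgroup of $\Pbf$; maximality then forces $\Gbf'$ to be a Levi subgroup itself. By Mostow's conjugacy theorem for Levi decompositions in characteristic zero, every Levi subgroup of $\Pbf$ has the form $w\Gbf w^{\inv}$ for some $w\in\Wbf(\Qbb)$. As $w\in\Wbf(\Qbb)\subset\Pbf(\Qbb)$, the inner pair $(\Int w,w\cdot)$ is an automorphism of $(\Pbf,\Ubf,Y)$ carrying the section $(\Gbf,Y_0)$ to $(w\Gbf w^{\inv},wY_0)$; letting $w$ range over $\Wbf(\Qbb)$ exhausts the maximal pure subdata. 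The same family of Levi subgroups, each equipped with its unique compatible space $wY_0$ by part~(1) applied to the $w$-conjugated datum, parametrizes the sections of $\pi_\Wbf$, which gives part~(4).

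I expect the main obstacle to be the rationality in part~(3): whereas part~(1) only produces a conjugating element in $\Wbf(\Rbb)\Ubf(\Cbb)$, matching two $\Qbb$-rational Levi subgroups requires the conjugating $w$ to lie in $\Wbf(\Qbb)$, which is precisely the content of Mostow's theorem over $\Qbb$ and the point where the argument genuinely uses that $\Gbf$ and $\Gbf'$ are defined over $\Qbb$ rather than over $\Rbb$.
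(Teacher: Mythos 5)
Your route is sound overall and genuinely different from the paper's in its first half. The paper proves the existence of a pure point directly inside $(\Pbf,\Ubf,Y)$: the image of $h(y)$ is a $\Cbb$-torus, hence contained in a maximal reductive subgroup $w\Gbf_\Cbb w^\inv$, and axiom (MS-1) (the composite into $\Pbf/\Ubf$ is defined over $\Rbb$) lets one choose $w\in\Wbf(\Rbb)\Ubf(\Cbb)$; it then verifies by hand that $\Ad\circ x$ has Hodge type $\{(-1,1),(0,0),(1,-1)\}$, so that $(\Gbf,Y_0)$ is a pure subdatum and a section. You instead outsource all of this to Pink's reconstruction (Lemma \ref{reconstruction}), in whose explicit model $Y=\Ubf(\Cbb)\Wbf(\Rbb)\times X$ the transitivity of $\Wbf(\Rbb)\Ubf(\Cbb)$ on the fibres of $\pi_\Wbf$ is a tautology. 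This is legitimate within the paper, which quotes the reconstruction as a black box (and itself argues in the model in Lemma \ref{subdatum-recovery}), but it is less self-contained: in Pink's thesis the identification $Y\isom\Ubf(\Cbb)\Wbf(\Rbb)\times X$ is obtained by essentially the conjugation argument the paper gives here. For the classification of maximal pure subdata and of sections, you and the paper rely on the same input (rational conjugacy of Levi subgroups), and your write-up is in fact more detailed than the paper's one-line conclusion.

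One step, however, is justified by a non sequitur, and your parts (3)--(4) lean on it. You assert that the points $(1,x)$ are \emph{exactly} the points of $Y$ whose homomorphism lands in $\Gbf_\Rbb$, ``since $w'\Gbf w'^{\inv}=\Gbf$ forces $w'\in\Gbf\cap\Wbf=1$''. But the image of $h(w',x)$ is the torus $w'\,h(x)(\Sbb_\Cbb)\,w'^{\inv}$, not a conjugate of $\Gbf$; its containment in $\Gbf_\Cbb$ gives you no identity $w'\Gbf w'^{\inv}=\Gbf$ to exploit. The correct argument: if $\Int(w')\circ h(x)$ has image in $\Gbf_\Cbb$, apply $\pi_\Wbf$ (which kills $w'$ and is the identity on $\Gbf$) to get $\Int(w')\circ h(x)=h(x)$, so $w'$ centralizes the torus $h(x)(\Sbb_\Cbb)$; since $\Lie\Wbf$ has Hodge types $(-1,0),(0,-1),(-1,-1)$ only, this centralizer in $\Wbf_\Cbb$ has trivial Lie algebra, and being a subgroup of a unipotent group in characteristic zero it is connected, hence trivial, so $w'=1$. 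This repaired uniqueness statement is what actually guarantees that a pure subdatum whose group lies in the Levi $w\Gbf w^\inv$ has underlying space inside $wY_0$ --- which is needed both for ``maximality forces $\Gbf'$ to be a Levi'' and for the claim that each rational Levi carries the unique compatible space $wY_0$. Without it, the exhaustion of maximal subdata and of sections is unsupported. (The paper's own proof is silent on this point as well, but it does not assert an incorrect reason.)
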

\begin{proof}
 Take $y\in h(Y)$, then the image $y:\Sbb_\Cbb\ra\Pbf_\Cbb$ is a $\Cbb$-torus, hence contained in a maximal reductive $\Cbb$-subgroup of $\Pbf$, which is of the form $w\Gbf_\Cbb w^\inv$. Since $\pi_\Ubf\circ y:\Sbb_\Cbb\ra(\Pbf/\Ubf)_\Cbb$, we can take $w\in\Wbf(\Rbb)\Ubf(\Cbb)$ such that $x:=\Int(w^\inv)\circ y$ has image in $\Gbf_\Cbb$ and descends to $\Sbb\ra\Gbf_\Rbb$. It is then easy to check that the rational Hodge structure $\Ad\circ x:\Sbb\ra\Gbf_\Rbb\ra\GLbf_\Rbb(\Lie\Gbf_\Rbb)$ is of type $\{(-1,1),(0,0),(1,-1)\}$, and that $(\Gbf,Y_0:=\Gbf(\Rbb)x)$ is a pure subdatum of $(\Pbf,\Ubf,Y)$, which is a section to $\pi_\Wbf:(\Pbf,\Ubf,Y)\ra(\Gbf,X)$.

Since every Levi $\Qbb$-subgroup of $\Pbf$ is of the form $w\Gbf w^\inv$ for some $w\in\Wbf(\Qbb)$, we see that every pure section of $\pi_\Wbf$ is of the form $(w\Gbf w^\inv,w Y_0)$.\end{proof}

\begin{definition} \label{kuga-datum}

A \emph{Kuga datum} is a mixed Shimura datum $(\Pbf,\Ubf,Y)$ with $\Ubf=1$, namely the mixed Hodge structure induced by $y\in h(Y)$ does not admit any component of type $(-1,-1)$.

By \ref{reconstruction}, a Kuga datum can be reconstructed from a quadruple $(\Gbf,X;\Vbf,\rho)$ where $(\Gbf,X)$ is a pure Shimura datum, $(\Vbf,\rho)$ is an algebraic representation of $\Gbf$ over $\Qbb$, such that for any $x\in h(X)$, $(\Vbf,\rho\circ x)$ is a rational Hodge structure of type $\{(-1,0),(0,-1)\}$; the resulting Kuga datum is $(\Pbf=\Vbf\rtimes_\rho\Gbf,Y=\Vbf(\Rbb)X)$, which is denoted as $(\Pbf,Y)=\Vbf\rtimes_\rho(\Gbf,X)$ to indicate that it is extended from the pure datum $(\Gbf,X)$ by $(\Vbf,\rho)$, and contains $(\Gbf,X)$ as a pure section. We write $\pi:(\Pbf,Y)\ra(\Gbf,X)$ for the projection modulo $\Vbf$.

Notice that if $(\Pbf,Y)$ is a Kuga datum, then its subdata are Kuga data, because no unipotent $\Qbb$-subgroup of Hodge type $(-1,-1)$ arises.

It is also clear that if $(\Pbf,\Ubf,Y)$ is a mixed Shimura datum, then its quotient by $\Ubf$ is a Kuga datum, which is also its maximal Kuga quotient.

As we will mainly work with connected Kuga varieties, it is convenient to consider \emph{connected Kuga data}, which are of the form $(\Pbf,Y;Y^+)$, with $(\Pbf,Y)$ a Kuga datum and $Y^+$ a connected component of $Y$. The rational Levi decomposition gives rise to the notation $(\Pbf,Y;Y^+)=\Vbf\rtimes_\rho(\Gbf,X;X^+)$, where $(\Gbf,X;X^+)$ is a connected pure Shimura datum. The case for mixed Shimura data is similar, but not needed in what follows.
\end{definition}

\begin{example} \label{siegel-example}
Let $(\Vbf,\psi)$ be a (non-degenerate) symplectic space over $\Qbb$.

(1) Let $\GSp(\Vbf)$ be the $\Qbb$-group of symplectic similitudes of $(\Vbf,\psi)$, and $\Hscr(\Vbf)$ the set of complex structures on $\Vbf_\Rbb$ that are polarized by $\pm\psi$. Then $\GSp(\Vbf)(\Rbb)$ acts on $\Hscr(\Vbf)$ transitively, and $(\GSp(\Vbf),\Hscr(\Vbf))$ is a pure Shimura datum (in the sense of Deligne).  

(2) Using the standard representation $\rho:\GSp(\Vbf)\ra\GLbf(\Vbf)$, we define $(\Pcal(\Vbf),\Ycal(\Vbf)):=\Vbf\rtimes_\rho(\GSp(\Vbf),\Hscr(\Vbf))$, then we get the Kuga datum associated to $(\GSp(\Vbf),\Hscr(\Vbf);\Vbf,\rho)$.

(3) Let $\Wcal(\Vbf)$   be the central extension of $\Vbf$ by $\Gbb_\arm$ through the alternating bilinear form $\psi:\Vbf\times\Vbf\ra\Gbb_\arm$, and let $\GSp(\Vbf)$ acts on $\Gbb_\arm$ through the scalar character $\lambda:\GSp(\Vbf)\ra\Gbb_\mrm$ (i.e. $\psi(gv,gv')=\lambda(g)\psi(v,v')$ for $g\in\GSp(\Vbf)$ and $v,v'\in\Vbf$), we get a mixed Shimura datum  $(\Pcalhat(\Vbf),\Ucal(\Vbf),\hat{\Ycal}(\Vbf))$ where $\Pcalhat:=\Wcal(\Vbf)\rtimes\GSp(\Vbf)$, $\Ucal(\Vbf)$ is the center $\Gbb_\arm$ of $\Wcal(\Vbf)$, and $\hat{\Ycal} $ is the $\Wcal(\Vbf)(\Rbb)\Ucal(\Vbf)(\Cbb)$-orbit of $\Hscr(\Vbf)$ in $\Yfrak(\Pcalhat(\Vbf))$ (through $\Hscr(\Vbf)\subset\Xfrak(\GSp(\Vbf))\subset\Yfrak(\Pcalhat(\Vbf))$).

\end{example}

% symplectic embedding lemma removed 

In the rest of this section we collect material about Kuga data, Kuga varieties, and special subvarieties.
\bigskip

\begin{lemma} \label{group-law} Let $(\Pbf,Y)=\Vbf\rtimes_\rho(\Gbf,X)$ be a Kuga datum, with the group law on $\Pbf=\Vbf\rtimes_\rho\Gbf$ written as $$(v,g)(v',g')=(v+g(v'),gg')\ \ \ v,v'\in\Vbf, g,g'\in\Gbf, g(v')=\rho(g)(v'),$$ then

(1) the representation $\rho$ admits no trivial subrepresentations over $\Rbb$ (of dimension $> 0$);

(2) the derived group $\Pbf^\der$ of $\Pbf$ equals $\Vbf\rtimes_\rho\Gbf^\der$.

\end{lemma}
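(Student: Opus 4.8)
The plan is to prove the two assertions in order, since (1) is precisely the input needed for (2). For part (1) I would argue Hodge-theoretically. Because $(\Gbf,X)$ is a pure Shimura datum, every $x\in h(X)$ factors as a morphism $x\colon\Sbb\ra\Gbf_\Rbb$, and composing with $\rho$ produces the Hodge structure $(\Vbf,\rho\circ x)$ of type $\{(-1,0),(0,-1)\}$. The subspace of a $\Sbb$-representation on which $\Sbb$ acts trivially is exactly its $(0,0)$-component, so the stated Hodge type forces $\Vbf_\Cbb^{x(\Sbb_\Cbb)}=0$, and hence $\Vbf_\Rbb^{x(\Sbb)}=0$. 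Now if $\Vbf_0\subseteq\Vbf_\Rbb$ were a nonzero subspace on which $\Gbf_\Rbb$ acts trivially, then since $x(\Sbb)\subseteq\Gbf_\Rbb$ we would get $0\neq\Vbf_0\subseteq\Vbf_\Rbb^{x(\Sbb)}=0$, a contradiction. Thus $\rho$ admits no nonzero trivial $\Rbb$-subrepresentation; in particular the invariants $\Vbf_\Rbb^{\Gbf_\Rbb}$, and a fortiori $\Vbf^{\Gbf}$, vanish.

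For part (2) one inclusion is formal. The subgroup $\Vbf\rtimes_\rho\Gbf^\der$ is a normal $\Qbb$-subgroup of $\Pbf$ (as $\Vbf$ is normal and $\Gbf^\der$ is normal in $\Gbf$), and the quotient is $\Gbf/\Gbf^\der=\Gbf^\ab$, which is commutative; hence $\Pbf^\der\subseteq\Vbf\rtimes_\rho\Gbf^\der$. For the reverse inclusion I would first note that the Levi embedding $\Gbf\hookrightarrow\Pbf$ gives $\Gbf^\der=[\Gbf,\Gbf]\subseteq[\Pbf,\Pbf]=\Pbf^\der$, and then show $\Vbf\subseteq\Pbf^\der$. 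The decisive computation is a commutator in $\Pbf=\Vbf\rtimes_\rho\Gbf$: using $(v,g)^{-1}=(-g^{-1}(v),g^{-1})$ one checks directly from the group law that $[(0,g),(v,1)]=((\rho(g)-\id)v,\,1)$. Consequently $\Pbf^\der$ contains the $\Qbb$-span $\Vbf_1$ of all vectors $(\rho(g)-\id)v$. A short check ($\rho(h)(\rho(g)-\id)v=(\rho(hg)-\id)v-(\rho(h)-\id)v$) shows $\Vbf_1$ is a $\Gbf$-submodule, and by construction $\Gbf$ acts trivially on $\Vbf/\Vbf_1$.

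It remains to upgrade $\Vbf_1\subseteq\Pbf^\der$ to $\Vbf=\Vbf_1$, and here I would invoke reductivity together with part (1). Since $(\Gbf,X)$ is pure, $\Gbf$ is reductive, so $\Vbf$ is a semisimple $\Gbf$-module and splits as $\Vbf=\Vbf_1\oplus\Vbf_2$ with $\Vbf_2\cong\Vbf/\Vbf_1$ a trivial subrepresentation. Were $\Vbf_2\neq0$, then $\Vbf_2\otimes_\Qbb\Rbb$ would be a nonzero trivial $\Gbf_\Rbb$-subrepresentation of $\Vbf_\Rbb$, contradicting part (1); hence $\Vbf_2=0$ and $\Vbf=\Vbf_1\subseteq\Pbf^\der$. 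Combined with $\Gbf^\der\subseteq\Pbf^\der$, the subgroup generated by $\Vbf$ and $\Gbf^\der$ (which is $\Vbf\rtimes_\rho\Gbf^\der$, as $\Gbf^\der$ normalizes $\Vbf$) lies in $\Pbf^\der$, giving the desired equality.

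I expect essentially all the mathematical content to sit in this last linkage: part (1) is exactly the statement that $\Vbf$ has no trivial $\Gbf$-summand, which is what guarantees that the commutators $(\rho(g)-\id)v$ already exhaust $\Vbf$ rather than leaving a fixed complement. Everything else is bookkeeping with the semidirect-product group law. The only point requiring mild care is the descent of triviality between $\Qbb$ and $\Rbb$, but this is harmless since a trivial $\Qbb$-representation stays trivial after base change to $\Rbb$, so part (1) over $\Rbb$ suffices to kill $\Vbf_2$.
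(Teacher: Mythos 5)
Your proof is correct and follows essentially the same route as the paper: the identical Hodge-type argument (a trivial $\Rbb$-subrepresentation would give a $(0,0)$-substructure, contradicting type $\{(-1,0),(0,-1)\}$) for (1), and the same commutator computation $[g,v]=(\rho(g)-\id)v$ together with commutativity of $\Pbf/(\Vbf\rtimes_\rho\Gbf^\der)$ for (2). The only difference is that you spell out, via semisimplicity of $\Vbf$ as a module over the reductive group $\Gbf$, why these commutators span all of $\Vbf$ --- a step the paper compresses into the one-line assertion that the set $\{v-g(v)\}$ generates $\Vbf$.
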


\begin{proof}
(1) If $\Vbf_\Rbb$ admits a trivial subrepresentation $\Vbf'$ over $\Rbb$, then take $x\in h(X)$ we see that $\Sbb\ot{x}\ra\Gbf_\Rbb\ra\Pbf\ra\GLbf_\Rbb(\pfrak_\Rbb)$ induces a real Hodge substructure in $\Vbf_\Rbb$ of type $(0,0)$, which contradicts the fact that $\rho\circ x$ defines a Hodge structure on $\Vbf$ of type $\{(-1,0),(0,-1)\}$.

(2) By the group law formula, for $v\in\Vbf$ and $g\in\Gbf$, $$vgv^\inv g^\inv=(v,1)(0,g)(-v,1)(0,g^\inv)=(v-g(v),1).$$ Since $\Vbf$ does not admit trivial subrepresentation over $\Qbb$, the set $\{v-g(v):v\in\Vbf,g\in\Gbf\}$   generates $\Vbf$. Thus $\Vbf$ (identified as the unipotent radical of $\Pbf$) and $\Gbf^\der$ are both contained in $\Pbf^\der$. It is clear that $\Vbf$ is stabilized by $\Gbf^\der$ under $\rho$, and that $\Pbf/(\Vbf\rtimes_\rho\Gbf^\der)\isom\Gbf/\Gbf^\der$ is commutative, and thus the required equality follows.\end{proof}

We can recover Kuga subdata in the following way:
\begin{lemma}
  \label{subdatum-recovery} Let $(\Pbf,Y)$ be a Kuga datum of the form $\Vbf\rtimes_\rho(\Gbf,X)$. Then any Kuga subdatum of  $(\Pbf,Y)$ can be described as $(\Pbf_1=\Vbf_1\rtimes_\rho(v\Gbf_1v^\inv),Y_1=(\Vbf_1(\Rbb)+v)\rtimes X_1)$, where $(\Gbf_1,X_1)$ is some pure subdatum of $(\Gbf,X)$, $v$ a vector in $\Vbf(\Qbb)$, $\Vbf_1\subset\Vbf$ a subrepresentation of the restriction $\rho_1=\rho_{|\Gbf_1}:\Gbf_1\ra\GLbf_\Qbb(\Vbf)$, and $(\Vbf_1(\Rbb)+v)\rtimes X_1$ stands for the orbit of $X_1$ in $Y$ under the subset $\Vbf_1(\Rbb)+v\subset\Vbf(\Rbb)$; $v$ is unique up to translation by $\Vbf_1(\Rbb)$. Here it is also required that the connected center of $\Gbf_1$ acts on $\Vbf_1$ through a $\Qbb$-torus satisfying the condition \ref{mixed-shimura-datum}(1)(K-4). We thus also write $(\Pbf_1,Y_1)=\Vbf_1\rtimes_\rho(v\Gbf_1v^\inv,v\rtimes X_1)$, where $v\rtimes X_1:=\Int(v)(X_1)\subset\Xfrak(\Pbf)$, $\Int(v)$ being the conjugation by $v$.
\end{lemma}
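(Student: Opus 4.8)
The plan is to produce the four data $(\Gbf_1,X_1)$, $\Vbf_1$ and $v$ out of a given Kuga subdatum $(\Pbf_1,Y_1)\mono(\Pbf,Y)$, using the Levi theory of $\Pbf$ together with the coordinates on $Y$ furnished by Lemma \ref{reconstruction} and Proposition \ref{description-of-subdata}. First I would set $\Vbf_1:=\Pbf_1\cap\Vbf$. Since $\Vbf$ is the (abelian) unipotent radical of $\Pbf$, the group $\Vbf_1$ is a normal unipotent $\Qbb$-subgroup of $\Pbf_1$, and the composite $\Pbf_1\mono\Pbf\ra\Gbf$ has kernel exactly $\Vbf_1$; as its image in the reductive group $\Gbf$ is reductive, $\Vbf_1$ is the unipotent radical of $\Pbf_1$. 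Hence the maximal pure quotient of $(\Pbf_1,Y_1)$ is $(\Gbf_1,X_1)$ with $\Gbf_1\isom\Pbf_1/\Vbf_1\mono\Gbf$ and $X_1=\pi(Y_1)$; by the universal property of maximal pure quotients (Definition \ref{shimura-datum-morphism}) this is a pure Shimura datum, and the inclusion into $(\Gbf,X)$ makes it a pure subdatum.

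Next I would position the Levi. A $\Qbb$-Levi $\Gbf_1'$ of $\Pbf_1$ is a reductive $\Qbb$-subgroup of $\Pbf$, hence, by Mostow's theory of Levi decompositions in characteristic zero, is contained in a $\Qbb$-Levi of $\Pbf$; by the conjugacy of Levis under the rational unipotent radical, the latter has the form $v\Gbf v^{-1}$ for some $v\in\Vbf(\Qbb)$. Writing $\Gbf_1:=v^{-1}\Gbf_1'v\subset\Gbf$ gives $\Pbf_1=\Vbf_1\rtimes_\rho(v\Gbf_1v^{-1})$, the resulting $\Gbf_1$ being identified with the abstract quotient of the first paragraph via $\pi$. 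Because $\Vbf$ is abelian, conjugation by $v\in\Vbf$ acts trivially on $\Vbf$, so the conjugation action of $v\Gbf_1v^{-1}$ on $\Vbf_1\subset\Vbf$ is exactly $\rho|_{\Gbf_1}$; thus $\Vbf_1$ is a subrepresentation of $\rho_1=\rho|_{\Gbf_1}$, as required.

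For the shape of $Y_1$ I would use the standard section of Proposition \ref{description-of-subdata} to identify $Y\isom\Vbf(\Rbb)\times X$ with action $(w,g)(w',x)=(w+\rho(g)w',gx)$, the section being the locus where the associated morphism $h$ lands in $\Gbf_\Rbb$. Applying Proposition \ref{description-of-subdata} to $(\Pbf_1,Y_1)$ produces $y_1\in Y_1$ on the pure section attached to $v\Gbf_1v^{-1}$, i.e. with $h(y_1)$ factoring through $(v\Gbf_1v^{-1})_\Rbb$. By the equivariance $h(p\cdot y)=\Int(p)\circ h(y)$, the point $v^{-1}\cdot y_1$ then has $h$-image in $\Gbf_\Rbb$, so it lies on the standard section and reads $(0,x_1)$ with $x_1=\pi(y_1)\in X_1$; consequently $y_1=(v,x_1)$. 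A direct computation with the action formula gives $(v\Gbf_1v^{-1})(\Rbb)\cdot y_1=\{v\}\times X_1$, and sweeping by $\Vbf_1(\Rbb)$ yields $Y_1=\Pbf_1(\Rbb)\cdot y_1=(\Vbf_1(\Rbb)+v)\rtimes X_1$, as claimed. The remaining assertions are quick: two Levis of $\Pbf_1$ are conjugate by some $w_1\in\Vbf_1(\Qbb)$, and replacing $v\Gbf_1v^{-1}$ by $(w_1v)\Gbf_1(w_1v)^{-1}$ turns $v$ into $v+w_1$, so $v$ is well defined modulo $\Vbf_1(\Rbb)$ (only this class affects $Y_1$); the condition that the connected center of $\Gbf_1$ act on $\Vbf_1$ through a $\Qbb$-torus as in (MS-4) is precisely axiom (MS-4) for the subdatum $(\Pbf_1,Y_1)$, transcribed via Lemma \ref{reconstruction}. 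The converse, that any such quadruple $(\Gbf_1,X_1;\Vbf_1,\rho_1)$ with $v\in\Vbf(\Qbb)$ defines a Kuga subdatum, is the reconstruction of Lemma \ref{reconstruction}, the Hodge type $\{(-1,0),(0,-1)\}$ on $\Vbf_1$ being inherited as a $\Qbb$-sub-Hodge structure of $\Vbf$.

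I expect the genuinely delicate step to be the rationality in positioning the Levi: guaranteeing that the conjugating element lies in $\Vbf(\Qbb)$ rather than merely in $\Vbf(\Rbb)$, which is exactly what makes $v$ rational and $\Gbf_1$ a $\Qbb$-subgroup. Once the section point $y_1$ has been correctly located in the coordinates $\Vbf(\Rbb)\times X$, the identification of $Y_1$ is a bookkeeping exercise, and the uniqueness and center-condition statements follow formally from the structure theory already in hand.
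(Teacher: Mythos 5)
Your skeleton is essentially the paper's: pass to the image $(\Gbf_1,X_1)$ under $\pi_*$, identify $\Vbf_1=\Pbf_1\cap\Vbf$ as the unipotent radical, use Mostow's theorem to place a $\Qbb$-Levi of $\Pbf_1$ inside a rational Levi $v\Gbf v^\inv$ of $\Pbf$ with $v\in\Vbf(\Qbb)$, and control the ambiguity in $v$ by Levi conjugacy. But there are two genuine gaps, and both come from omitting the Hodge-theoretic input that the paper invokes at exactly these two points. The first: your claim that $\Vbf_1$ is the unipotent radical of $\Pbf_1$ rests on the assertion that the image of $\Pbf_1$ ``in the reductive group $\Gbf$ is reductive.'' That is a non-sequitur: subgroups of reductive groups need not be reductive (Borel subgroups, unipotent subgroups), and since $\Pbf_1$ itself is not reductive, nothing formal forces its image to be. The paper proves reductivity of $\Gbf_1=\pi(\Pbf_1)$ by Hodge theory: for $x\in h(X_1)$ the Hodge structure on $\Lie\Gbf_1$ is induced from the one on $\Lie\Gbf$, which is pure of weight $0$ (type $\{(-1,1),(0,0),(1,-1)\}$), so $(\Gbf_1,X_1)$ is a pure datum and in particular $\Gbf_1$ is reductive; equivalently, (MS-2) for the subdatum forces the Lie algebra of the unipotent radical of $\Pbf_1$ to lie in $W_{-1}(\Lie\Pbf)=\Lie\Vbf$. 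Some such argument must be supplied before your first paragraph (and hence everything after it) stands.

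The second gap is in the uniqueness of $v$. You only observe that replacing the Levi $v\Gbf_1v^\inv$ by a $\Vbf_1(\Qbb)$-conjugate shifts $v$ to $v+w_1$. That does not yet give well-definedness modulo $\Vbf_1$: you must also rule out that one and the same Levi subgroup can be written as $u\Gbf_1u^\inv$ with $u-v\notin\Vbf_1$. By the group law of \ref{group-law}, the equality $u\Gbf_1u^\inv=v\Gbf_1v^\inv$ forces only that $u-v$ is fixed by $\rho(\Gbf_1)$; if the fixed space $\Vbf^{\Gbf_1}$ were nonzero, the group $\Pbf_1$ alone would not determine $v$ modulo $\Vbf_1$, and your parenthetical ``only this class affects $Y_1$'' points the wrong way (it says the class determines $Y_1$, whereas uniqueness needs that the subdatum determines the class). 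The paper closes precisely this hole: since $(\Vbf\rtimes\Gbf_1,\Vbf(\Rbb)\rtimes X_1)$ is itself a Kuga datum, Lemma \ref{group-law}(1) shows $\rho|_{\Gbf_1}$ admits no trivial subrepresentation, hence no nonzero fixed vectors in $\Vbf(\Qbb)$, whence $u-v\in\Vbf_1(\Qbb)$ (which is even stronger than the $\Vbf_1(\Rbb)$-statement). The same vanishing of fixed vectors is also what justifies your identification of the standard section with the locus where $h$ lands in $\Gbf_\Rbb$ in your third paragraph; none of these steps is pure group-theoretic bookkeeping, and as written your proof never establishes or cites the vanishing.
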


\begin{proof}
  Take $(\Gbf_1,X_1)$ as the image of $(\Pbf_1,Y_1)$ under $\pi_*:(\Pbf,Y)\ra(\Gbf,X)$, which is a subdatum by \ref{mixed-shimura-datum}(2). For $x\in h(X_1)$, the Hodge structure on $\gfrak_1=\Lie\Gbf_1$ is induced from the one on $\gfrak=\Lie\Gbf$ given by $x$, hence $(\Gbf_1,X_1)$ is pure itself. In particular $\Gbf_1$ is a reductive $\Qbb$-subgroup of $\Gbf$. The kernel of $\Pbf_1\ra\Gbf$ is contained in $\Ker\pi=\Vbf$, which is unipotent. Thus $\Gbf_1$ is a maximal reductive quotient of $\Pbf_1$, and $(\Gbf_1,X_1)$ is a maximal reductive quotient of $(\Pbf_1,Y_1)$. Take a Levi decomposition of $\Pbf_1$ of the form $\Pbf_1=\Vbf_1\rtimes\Hbf$, with $\Vbf_1$ being the unipotent radical, and $\Hbf$ isomorphic to $\Gbf_1$. Then $\Hbf\subset\Pbf_1\subset\Pbf$ extends to a maximal reductive $\Qbb$-subgroup of $\Pbf$, namely a Levi $\Qbb$-subgroup of the form $v^\inv\Gbf v$ for some $v\in\Vbf(\Qbb)$.% Therefore $v\Gbf v^\inv\cap\Pbf_1\supset

If one has a second choice $u\in\Vbf(\Qbb)$ in place of $v$, namely $$\Vbf_1\rtimes(v\Gbf_1v^\inv)=\Pbf_1=\Vbf_1\rtimes(u\Gbf_1u^\inv),$$ then $u\Gbf_1u^\inv$ and $v\Gbf_1v^\inv$ are both Levi $\Qbb$-subgroups of $\Pbf_1$, and they differ by a $\Vbf_1(\Qbb)$-conjugation, i.e. for some $w\in\Vbf_1(\Qbb)$ we have $wu\Gbf_1u^\inv w^\inv=v\Gbf_1v^\inv$. By the formulas in \ref{group-law}, this means $(w+u-v)\Gbf_1(w+u-v)^\inv=\Gbf_1$, and $w+u-v$ is fixed by $\Gbf_1$. Since $(\Vbf\rtimes\Gbf_1,\Vbf(\Rbb)\rtimes X_1)$ is a Kuga datum itself, the representation of $\Gbf_1$ on $\Vbf$ does not admit trivial subrepresentations (of dimension $> 0$), hence it has no fixed vectors in $\Vbf(\Qbb)$. Therefore $w+u-v=0$, and $v$ is unique up to translation by $\Vbf_1(\Qbb)$.\end{proof}

\begin{definition}
  \label{kuga-variety}
  Let $(\Pbf,Y)$ be a Kuga datum.

  (1) To each \cosg $K\subset\Pbf(\adele)$ one associates an algebraic variety $M_K(\Pbf,Y)$ over $\Cbb$, whose complex points are described by the forumla $$M_K(\Pbf,Y)(\Cbb)=\Pbf(\Qbb)\bsh[Y\times\Pbf(\adele)/K]$$ called the \emph{Kuga variety} at level $K$ defined by $(\Pbf,Y)$. In the definition $\Pbf(\Qbb)$ acts on the product $Y\times\Pbf(\adele)/K$ via the diagonal, and the algbraicity is established in \cite{pink-thesis}, which generalizes the pure case treated in \cite{baily-borel}.

  It also follows from \cite{pink-thesis} 3.2 that if one fixes $\{g\}$ a finite set of representatives of the double quotient $\Pbf(\Qbb)_+\bsh\Pbf(\adele)/K$, then the complex points are equally described as $$M_K(\Pbf,Y)(\Cbb)\isom\coprod_g\Gamma_K(g)\bsh Y^+$$ where $Y^+$ is any fixed connected component of $Y$, and $\Gamma_K(g)=\Pbf(\Qbb)_+\cap gKg^\inv$ is a congruence subgroup stabilizing $Y^+$.

  The canonical projection $\wp_K:Y\times\Pbf(\adele)/K\ra M_K(\Pbf,Y)(\Cbb)$ sending $(y,aK)$ to its class modulo $\Pbf(\Qbb)$ is referred to as the \emph{uniformization map} for $M_K(\Pbf,Y)$. Note that if one fixes $\{g\}$ a set of representatives as above, then the subset $Y^+\times gK$ is mapped exactly onto $\Gamma_K(g)\bsh Y^+$.

  (2) Motivated by (1), a \emph{connected Kuga variety} associated to the connected Kuga datum $(\Pbf,Y;Y^+)$ is a complex analytic space of the form $M=\Gamma\bsh Y^+$, where $Y^+$ is a fixed connected component and $\Gamma\subset\Pbf(\Qbb)_+$ is a congruence subgroup. It follows from \cite{pink-thesis} that such a complex analytic space underlies an algebraic variety over $\Cbb$, and every geometrically connected component of a Kuga variety in (1) is obtained in this way. $M$ is referred to as the connected Kuga variety associated to the quadruple $(\Pbf,Y;Y^+,\Gamma)$.

  Since $\Gamma$ contains $\Gamma\cap\Pbf(\Qbb)^+$ as a subgroup of finite index, in the sequel we often take $\Gamma\subset\Pbf(\Qbb)^+$ for simplicity.

 We also have the \emph{uniformization map} $\wp_\Gamma:Y^+\ra\Gamma\bsh Y^+, y\mapsto\Gamma y$.

  (3) If $M_K(\Pbf,Y)$ is given by the Kuga datum $(\Pbf,Y)=\Vbf\rtimes_\rho(\Gbf,X)$ with $K=K_\Vbf\rtimes K_\Gbf$ for \cosgs $K_\Vbf\subset\Vbf(\adele)$ and $K_\Gbf\subset\Gbf(\adele)$, then the canonical projection $$\pi:M=M_K(\Pbf,Y)\ra M_{K_\Gbf}(\Gbf,X)=S$$ defines an abelian $S$-scheme, whose fibers are abelian varieties with $\Vbf(\Abb)/\Vbf(\Qbb)K_\Vbf$ as the underlying complex tori.

  Similarly, in the connected case, if $\Gamma=\Gamma_\Vbf\rtimes\Gamma_\Gbf$ for congruence subgroups $\Gamma_\Vbf\subset\Vbf(\Qbb)$ and $\Gamma_\Gbf\subset\Gbf(\Qbb)_+$ \wrt $(\Pbf,Y;Y^+)=\Vbf\rtimes_\rho(\Gbf,X;X^+)$, then the projection $\pi:\Gamma\bsh Y^+\ra\Gamma_\Gbf\bsh X^+$ is an abelian scheme, whose fibers are isomorphic to $\Gamma_\Vbf\bsh \Vbf(\Rbb)$ (with complex structure varying along the base points in $\Gamma_\Gbf\bsh X^+$). These constructions are used in \cite{pink-combination} 2.9.

\end{definition}

\begin{definition}
  \label{special-subvariety}[cf. \cite{moonen-model} 6.2]
  (1) Let $M=M_K(\Pbf,Y)_\Cbb$ be a complex Kuga variety. A \emph{special subvariety} of $M$ is an algebraic subvariety in $M$ whose complex points are of the form $\wp_K(Y_1^+\times aK)$ with $a\in\Pbf(\adele)$ and $Y_1^+$ a connected component of some Kuga subdatum $(\Pbf_1,Y_1)\subset(\Pbf,Y)$. The readers are referred to \cite{moonen-model} 6.2 for further description of the special subvarieties in the pure case.

  (2) Let $M=\Gamma\bsh Y^+$ be a connected Kuga variety associated to some quadruple $(\Pbf,Y;Y^+,\Gamma)$. A \emph{special subvariety} of $M$ is a closed subvariety  whose complex points are given as $\wp_\Gamma(Y_1^+)$, where $Y_1^+$ is a connected component of some subdatum $(\Pbf_1,Y_1)$ such that $Y_1^+\subset Y^+$.

\end{definition}

\begin{lemma}
  Let $M'\subset M=M_K(\Pbf,Y)$ be a special subvariety in the sense of \ref{special-subvariety}(1), then it can be realized in the sense of (2). Conversely, up to shrinking the congruence subgroup $\Gamma$, a special subvariety in (2) can be realized in the sense of (1).
 
\end{lemma}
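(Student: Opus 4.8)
The plan is to use as a dictionary the decomposition recorded in \ref{kuga-variety}(1),
$$M_K(\Pbf,Y)(\Cbb)\isom\coprod_g\Gamma_K(g)\bsh Y^+,\qquad \Gamma_K(g)=\Pbf(\Qbb)_+\cap gKg^\inv,$$
together with the observation that the uniformization $\wp_K$, restricted to $Y^+\times\{gK\}$, agrees with $\wp_{\Gamma_K(g)}$ followed by the inclusion of the corresponding connected component. First I would note that a special subvariety $M'=\wp_K(Y_1^+\times aK)$ in the sense of \ref{special-subvariety}(1) is connected, being the continuous image of the connected set $Y_1^+$; hence it is contained in a single component $\Gamma_K(g)\bsh Y^+$ of $M$, namely the one indexed by the double coset of $a$.

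For the implication (1)$\Rightarrow$(2), given $M'=\wp_K(Y_1^+\times aK)$ with $(\Pbf_1,Y_1)\subset(\Pbf,Y)$, I would write $a=qgk$ with $q\in\Pbf(\Qbb)_+$, $g$ a chosen double-coset representative, and $k\in K$. The diagonal $\Pbf(\Qbb)$-action then gives $M'=\wp_K(q^\inv Y_1^+\times gK)$, where $q^\inv Y_1^+$ is a connected component of the conjugate subdatum $\Int(q^\inv)(\Pbf_1,Y_1)$, which is again defined over $\Qbb$. Since $M'$ is connected and lies in $\Gamma_K(g)\bsh Y^+=\wp_K(Y^+\times gK)$, I would produce a rational element $\delta\in\Pbf(\Qbb)\cap gKg^\inv$ carrying the component of $Y$ that contains $q^\inv Y_1^+$ onto $Y^+$; as $\delta g\in gK$, replacing $q^\inv$ by $\delta q^\inv$ leaves the class $gK$ unchanged, so one may assume $Y_2^+:=q^\inv Y_1^+\subset Y^+$. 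Then $M'=\wp_{\Gamma_K(g)}(Y_2^+)$ with $Y_2^+$ a component of a subdatum contained in $Y^+$, which is precisely the description of \ref{special-subvariety}(2) for the connected Kuga variety $\Gamma_K(g)\bsh Y^+$.

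For the converse, let $M=\Gamma\bsh Y^+$ with $M'=\wp_\Gamma(Y_1^+)$, where $Y_1^+\subset Y^+$ is a component of a subdatum $(\Pbf_1,Y_1)\subset(\Pbf,Y)$. Since $\Gamma$ is a congruence subgroup and those of the form $\Pbf(\Qbb)_+\cap K$ are cofinal, I would choose a compact open $K\subset\Pbf(\adele)$ with $\Gamma_K(1)=\Pbf(\Qbb)_+\cap K\subseteq\Gamma$; this inclusion is the promised shrinking of $\Gamma$. The component of $M_K(\Pbf,Y)$ indexed by $g=1$ is then the connected Kuga variety $\Gamma_K(1)\bsh Y^+$, which finitely covers $M$, and under this covering the adelic special subvariety $\wp_K(Y_1^+\times K)$ of \ref{special-subvariety}(1) restricts on the $g=1$ component to $\wp_{\Gamma_K(1)}(Y_1^+)$, whose image in $M$ is exactly $\wp_\Gamma(Y_1^+)=M'$. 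This realizes $M'$ in the sense of (1).

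The main obstacle is purely the bookkeeping of connected components: translating $a$ into a representative by an element of $\Pbf(\Qbb)_+$ may leave the conjugated component $q^\inv Y_1^+$ inside a component of $Y$ other than $Y^+$, and the argument hinges on producing the corrective element $\delta\in\Pbf(\Qbb)\cap gKg^\inv$ that restores $Y^+$ without disturbing the class $gK$. Making this precise requires unwinding how $\Pbf(\Qbb)$ permutes $\pi_0(Y)$ relative to the stabilizer $\Pbf(\Qbb)_+$ of $Y^+$, and checking that such a $\delta$ necessarily exists because the connected set $M'$ already sits inside $\wp_K(Y^+\times gK)$. Beyond this, the compatibility $\wp_K|_{Y^+\times\{gK\}}=\wp_{\Gamma_K(g)}$ is the conceptual heart of the lemma and reduces to a direct comparison of the two uniformizations.
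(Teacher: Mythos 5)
Your converse direction (2)$\Rightarrow$(1) is fine and is essentially the paper's own argument: shrink $\Gamma$ to $\Pbf(\Qbb)_+\cap K$ and identify $\wp_{\Gamma_K(1)}(Y_1^+)$ with $\wp_K(Y_1^+\times K)$. The gap is in the direction (1)$\Rightarrow$(2), at exactly the step you yourself flagged as the main obstacle. Your claim that the connected set $M'=\wp_K(Y_1^+\times aK)$ lies in the component indexed by the double coset of $a$ is false in general, and with it the existence of the corrective element $\delta\in\Pbf(\Qbb)\cap gKg^\inv$. The component containing $M'$ is $\wp_K(Y^{(c)}\times aK)$, where $Y^{(c)}$ is the component of $Y$ containing $Y_1^+$; to compare it with the pieces $\wp_K(Y^+\times gK)$ one must first carry $Y^{(c)}$ onto the fixed $Y^+$ by some $p\in\Pbf(\Qbb)$ (such $p$ exists by real approximation), and the relevant double coset is then that of $pa$, not of $a$. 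These can genuinely differ. Take $(\Pbf,Y)=(\GLbf_2,\Hscr^\pm)$, the genus-one Siegel datum with $\Hscr^\pm$ the union of the upper and lower half-planes (or its Kuga extension by the standard representation), $K$ the principal congruence subgroup of level $N\geq 3$, $Y^+=\Hscr^+$, $a=e$, and $Y_1^+$ a component of a subdatum lying in $\Hscr^-$. Any $\delta\in\Pbf(\Qbb)\cap gKg^\inv=\Pbf(\Qbb)\cap K$ is an integral matrix with determinant in $\{\pm1\}$ and congruent to the identity modulo $N$, hence has determinant $1$ and preserves $\Hscr^+$; so no $\delta$ can carry $\Hscr^-$ onto $\Hscr^+$, and indeed $\wp_K(\Hscr^-\times K)$ and $\wp_K(\Hscr^+\times K)$ are distinct components of $M_K$. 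Your justification for $\delta$ ("because $M'$ already sits inside $\wp_K(Y^+\times gK)$") presupposes precisely the false claim, so the argument is circular at the one point where it needed to do work.

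The repair is a change in the order of operations, and the paper's proof shows an even cheaper route. Either: first choose $p\in\Pbf(\Qbb)$ with $pY^{(c)}=Y^+$, replace $(\Pbf_1,Y_1;Y_1^+,a)$ by the conjugate datum $\Int(p)(\Pbf_1,Y_1)$ with component $pY_1^+\subset Y^+$ and adelic coordinate $pa$, and only then write $pa=q'g'k'$ with $q'\in\Pbf(\Qbb)_+$: since $q'$ preserves $Y^+$, the translate $q'^\inv pY_1^+$ stays inside $Y^+$ and $M'=\wp_{\Gamma_K(g')}(q'^\inv pY_1^+)$, with no corrective $\delta$ needed. Or, as the paper does: use that the decomposition $M_K(\Pbf,Y)(\Cbb)\isom\coprod_g\Gamma_K(g)\bsh Y^+$ is valid for \emph{any} choice of connected component $Y^+$ of $Y$ and \emph{any} set of double coset representatives, and simply choose $Y^+$ to be the component containing $Y_1^+$ and $a$ itself as one of the representatives; then $M'=\wp_{\Gamma_K(a)}(Y_1^+)$ immediately. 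Since Definition \ref{special-subvariety}(2) does not privilege a particular component or presentation, either route yields a valid realization in the sense of (2).
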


\begin{proof}
  Assume that the complex points of $M'$ are given as $\wp_K(Y'^+\times aK)$ for some $a\in\Pbf(\adele)$ and $Y'^+$ coming from some subdatum $(\Pbf',Y')$. Let $Y^+$ be a connected component of $Y$ containing $Y'^+$, and extend $a$ to a finite set of representatives of the double quotient $\Pbf(\Qbb)_+\bsh \Pbf(\adele)/K$. Then by construction in \label{special-subvariety}(1), $\wp_K(Y'^+\times aK)$ is mapped onto a special subvariety contained in the geometrically connected component $\Gamma_K(a)\bsh Y^+$, and thus equals $\wp_{\Gamma_K(a)}(Y'^+)$.

  Conversely, if $M'\subset M=\Gamma\bsh Y^+$ is a special subvariety $M'=\wp_\Gamma(Y'^+)$ for some connected subdatum $(\Pbf',Y';Y'^+)\subset(\Pbf,Y;Y^+)$, then up to shrinking $\Gamma$, we may assume that $\Gamma=\Pbf(\Qbb)_+\cap K$ for some \cosg $K\subset\Pbf(\adele)$. Then $M'$ is the same as $\wp_K(Y'^+\times K)$ in $M_K(\Pbf,Y)$.\end{proof}

%rationality remark removed

In order to apply measure-theoretic results to  Kuga varieties, we need certain lattice spaces associated to $\Qbb$-linear groups.

\begin{definition}
  \label{type-h}
  A linear $\Qbb$-group $\Pbf$ is said to be \emph{of type $\Hscr$} if it is of the form $\Pbf=\Wbf\rtimes\Hbf$ with $\Wbf$ a unipotent $\Qbb$-group and $\Hbf$ a connected semi-simple $\Qbb$-group without compact $\Qbb$-factors. 

  Given a linear $\Qbb$-group of type $\Hscr$ and $\Gamma\subset\Pbf(\Rbb)^+$ a congruence subgroup, we call $\Omega=\Gamma\bsh \Pbf(\Rbb)^+$  the \emph{lattice space} associated to $(\Pbf,\Gamma)$. Because $\Gamma$ is discrete in $\Pbf(\Rbb)^+$, the quotient $\Omega$ (of classes of translation by $\Gamma$) is a smooth manifold. We also have the \emph{uniformization map} $\wp_\Gamma:\Pbf(\Rbb)^+\ra\Omega$, $g\mapsto\Gamma g$.

  The left Haar measure $\nu_\Pbf$ on $\Pbf(\Rbb)^+$ passes to a Borel measure $\nu_\Omega$ on $\Omega$, referred to as the \emph{canonical measure} on the lattice space. More concretely, one can choose any fundamental domain $\Fcal\subset\Pbf(\Rbb)^+$ \wrt $\Gamma$, and put $\nu_\Omega(A)=\nu_\Pbf(\Fcal\cap \wp_\Gamma^\inv(A))$, for $A\subset\Omega$ measurable.

  The measure $\nu_\Omega$ is of finite volume, and is always normalized so that $\nu_\Omega(\Omega)=1$. In fact we have the following:
\end{definition}

\begin{lemma} \label{harish-chandra}
  (1) Let $\Pbf$ be a linear $\Qbb$-group of type $\Hscr$. Then every congruence subgroup of $\Pbf(\Rbb)^+$ is a lattice in the sense of \cite{harish-chandra-borel}.

  (2) Let $(\Pbf,Y)=\Vbf\rtimes_\rho(\Gbf,X)$ be a Kuga datum with pure section. Then $\Pbf^\der=\Vbf\rtimes\rho\Gbf^\der$ is a linear $\Qbb$-group of type $\Hscr$, and $\Pbf^\der(\Rbb)$ is unimodular, i.e. the left invariant Haar measure is also right invariant.

\end{lemma}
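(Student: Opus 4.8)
The plan is to treat the two parts in turn, with part (1) as the substantive claim and part (2) as a structural verification.

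For part (1), I would exploit the semidirect decomposition $\Pbf=\Wbf\rtimes\Hbf$ and build $\Gamma$ as a lattice assembled from lattices in the unipotent radical and in the semisimple quotient. Write $p:\Pbf(\Rbb)^+\ra\Hbf(\Rbb)^+$ for the projection modulo $\Wbf(\Rbb)$. First I would observe that $\Gamma_\Wbf:=\Gamma\cap\Wbf(\Rbb)$ is an arithmetic subgroup of the unipotent $\Qbb$-group $\Wbf$, hence a cocompact lattice in $\Wbf(\Rbb)$ (an arithmetic subgroup of a unipotent $\Qbb$-group is always cocompact). Next, since $\Gamma$ is congruence, hence arithmetic, in $\Pbf$, its image $p(\Gamma)$ is an arithmetic subgroup of the semisimple $\Qbb$-group $\Hbf$ by Borel's theorem on images of arithmetic groups, and therefore a lattice in $\Hbf(\Rbb)^+$ by the theorem of Borel and Harish-Chandra \cite{harish-chandra-borel}. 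In particular $p(\Gamma)$ is discrete, so $\Gamma\Wbf(\Rbb)=p^{-1}(p(\Gamma))$ is closed in $\Pbf(\Rbb)^+$.

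To pass from these two lattice statements to the lattice property of $\Gamma$ itself I would invoke the standard criterion for lattices in group extensions (cf. Raghunathan): for a closed normal subgroup $N$ of a unimodular locally compact group $G$ with $\Gamma N$ closed, $\Gamma$ is a lattice in $G$ \ifof $\Gamma\cap N$ is a lattice in $N$ and $\Gamma N/N$ is a lattice in $G/N$. The unimodularity hypothesis I would establish separately: $\Wbf(\Rbb)$ is unimodular because $\Wbf$ is unipotent, $\Hbf(\Rbb)^+$ is unimodular because $\Hbf$ is semisimple, and the twisting character $h\mapsto\det(\Ad(h)|_{\Lie\Wbf})$ is an algebraic character of the semisimple group $\Hbf$, hence trivial; consequently $\Pbf(\Rbb)^+$ is unimodular. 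Applying the criterion with $N=\Wbf(\Rbb)$ then shows $\Gamma$ has finite covolume, and concretely the covolume factors as $\mathrm{vol}(\Gamma_\Wbf\bsh\Wbf(\Rbb))\cdot\mathrm{vol}(p(\Gamma)\bsh\Hbf(\Rbb)^+)$ through the product decomposition of Haar measure.

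For part (2), the first task is structural: by \ref{group-law}(2) we already have $\Pbf^\der=\Vbf\rtimes_\rho\Gbf^\der$, and $\Vbf$ is a $\Qbb$-vector group, hence unipotent. It remains to check that $\Gbf^\der$ is a connected semisimple $\Qbb$-group without compact $\Qbb$-factors: connectedness and semisimplicity are automatic for the derived group of the connected reductive $\Gbf$, and the absence of compact $\Qbb$-factors follows from axiom (MS-3) applied to the pure datum $(\Gbf,X)$, since the central isogeny $\Gbf^\der\ra\Gbf^\ad$ matches up $\Qbb$-factors and $\Gbf^\ad$ has no compact ones. Thus $\Pbf^\der$ is of type $\Hscr$, and its unimodularity is then a direct instance of the computation in part (1): the determinant character of $\Gbf^\der$ acting on $\Vbf$ is trivial by semisimplicity, so $\Pbf^\der(\Rbb)$ is unimodular. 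The main obstacle, such as it is, lies not in novelty but in assembling the extension argument of part (1) correctly—one must simultaneously verify discreteness and finite covolume of the image in the semisimple quotient, cocompactness in the unipotent fibre, closedness of $\Gamma\Wbf(\Rbb)$, and the unimodularity that makes the covolume multiplicative; the triviality of $\det\Ad|_{\Lie\Wbf}$ is the linchpin that renders the criterion applicable.
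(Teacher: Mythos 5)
Your proof is correct, but for part (1) it takes a genuinely different route from the paper's. The paper's argument is a two-line reduction: a linear $\Qbb$-group $\Pbf=\Wbf\rtimes\Hbf$ of type $\Hscr$ admits no nontrivial $\Qbb$-rational character (any character kills the unipotent radical $\Wbf$, hence factors through the semisimple group $\Hbf$, hence is trivial), and then the Borel--Harish-Chandra theorem (\cite{harish-chandra-borel} 9.4) is invoked in its general form, which says precisely that an arithmetic subgroup of a $\Qbb$-group with no nontrivial $\Qbb$-characters is a lattice in its real points. You instead apply Borel--Harish-Chandra only to the semisimple quotient, and reassemble the lattice property by hand: cocompactness of $\Gamma\cap\Wbf(\Rbb)$ in the unipotent fibre, Borel's theorem on images of arithmetic groups to see that $p(\Gamma)$ is arithmetic in $\Hbf$, Raghunathan's extension criterion, and the unimodularity computation via triviality of the character $\det\bigl(\Ad(\cdot)|_{\Lie\Wbf}\bigr)$ on $\Hbf$. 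Both are valid; the paper's version is shorter because BHC 9.4 already covers non-reductive groups, so no decomposition is needed, while your route costs two extra black boxes (the image theorem and the extension criterion) but yields more refined information, namely the factorization of the covolume and cocompactness along the fibre. For part (2) your structural argument coincides with the paper's (both quote \ref{group-law}); your unimodularity argument via the determinant character is actually more robust than the paper's remark that $\Pbf^\der(\Rbb)$ is generated by commutators of $\Pbf(\Rbb)$, and you make explicit the absence of compact $\Qbb$-factors of $\Gbf^\der$ via (MS-3), a point the paper leaves implicit in the phrase ``of type $\Hscr$''.
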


\begin{proof}
  (1) It is clear that $\Pbf$ admits only the trivial character  $\Pbf\ra\Gbb_\mrm$ defined over $\Qbb$: in fact we are immediately reduced to the case where $\Pbf$ is reductive itself, and then it is semi-simple by definition. The claim then follows from \cite{harish-chandra-borel} 9.4.

  (2) We have seen in \ref{group-law} that $\Pbf^\der=\Vbf\rtimes_\rho\Gbf^\der$, hence it is of type $\Hscr$ as $\Gbf^\der$ is connected semi-simple $\Qbb$-group. Since $\Pbf^\der(\Rbb)$ is generated by commutators of $\Pbf(\Rbb)$, the modulus function must be trivial on it, hence it is uni-modular.\end{proof}

\begin{definition}
  \label{kuga-lattice-space}

  Let $M$ be the Kuga variety associated to the quadruple $(\Pbf,Y;Y^+,\Gamma)$.

  (1) The \emph{lattice space} associated to $M$ is the quotient $\Omega=\Gamma^\dag\bsh \Pbf^\der(\Rbb)^+$, where $\Gamma^\dag:=\Gamma\cap\Pbf^\der(\Rbb)^+$. $\Omega$ carries  the \emph{canonical (probability) measure} $\mu_\Omega$. Similar to Kuga varieties, the canonical projection $\Pbf^\der(\Rbb)^+\ra\Omega$, $g\mapsto\Gamma^\dag g$ is again referred to as the \emph{uniformization map}, and denoted as $\wp_\Gamma$ by abuse of notations.

  One also has the following \emph{orbit map} $$\kappa_y:\Omega=\Gamma^\dag\bsh\Pbf^\der(\Rbb)^+\ra M=\Gamma\bsh Y^+,\ \ \Gamma^\dag g\mapsto\Gamma gy$$ for any fixed point $y$ in $Y^+$.

  (2) Let $\Omega=\Gamma^\dag\bsh\Pbf^\der(\Rbb)^+$ be a lattice space as in (1). A \emph{lattice subspace} of $\Omega$ is a closed subset of the form $\Omega_\Hbf=\wp(\Hbf(\Rbb)^+)$ where $\Hbf\subset\Pbf^\der$ is a linear $\Qbb$-subgroup of type $\Hscr$. Here $\Hbf$ is not required to come from some Kuga subdatum.

  Since $\Omega_\Hbf\isom(\Gamma\cap\Hbf)\bsh\Hbf(\Rbb)^+$, $\Omega_\Hbf$ is the lattice space attached to the pair $(\Hbf,\Gamma\cap\Hbf(\Rbb)^+)$, and it also carries a canonical Borelian probability measure. In the sequel this measure is regarded as a measure on $\Omega$ with support equal to $\Omega_\Hbf$, referred to as the \emph{canonical measure} associated to $\Omega_\Hbf$.

  When $\Hbf=\Pbf'^\der$ for some Kuga subdatum $(\Pbf',Y')$, we get the \emph{special lattice subspace} associated to $(\Pbf',Y')$. Of course one may talk about special lattice subspace associated to some connected Kuga subdatum, and but the lattice subspace only depends on the $\Qbb$-subgroup: if there are subdata with a common $\Qbb$-group $(\Pbf',Y')$ and $(\Pbf',Y'')$, then they define the same special lattice subspace.

  (3) Note also that $Y^+$ carries a canonical measure, invariant under $\Pbf(\Rbb)^+$-conjugacy, and passes to a probability measure on $\Gamma\bsh Y^+$ (after normalization). Parallel to the case of lattice space, it can also be described via fundamental domains.
 
  Let $M'$ be a special subvariety of $M=\Gamma\bsh Y^+$ defined by some connected subdatum $(\Pbf',Y';Y'^+)$, then $M'$ is the image of the morphism $i:\Gamma'\bsh Y'^+\ra \Gamma\bsh Y^+$, with $\Gamma'=\Gamma\cap\Pbf'(\Rbb)_+$, and by the same arguments as in \cite{ullmo-yafaev} 2.2, $i$ is generically finite, which pushes forward the canonical measure on $\Gamma'\bsh Y'^+$ to a probability measure $\mu_{M'}$ on $M'$. Again we always view $\mu_{M'}$ as a measure on $M$ with support equal to $M'$, and call it the \emph{canonical measure} associated to $M'$.
\end{definition}

For reader's convenience we sketch the proofs of some elementary facts involved in the constructions of canonical measures on connected Kuga varieties:

\begin{lemma}
  \label{constructing-canonical-measures}

  Let $M=\Gamma\bsh Y^+$ be a connected Kuga variety given by $(\Pbf,Y;Y^+,\Gamma)$. Fix a base point $y\in Y^+$.

  (1) The orbit map $\kappa_y:\Pbf^\der(\Rbb)^+\ra Y^+ \ \ g\mapsto gy$ is surjective with compact fibers. The isotropy subgroup $K_y$ of $y$ in $\Pbf^\der(\Rbb)^+$ is a maximal compact subgroup.

  (2) The left-invariant Haar measure $\nu_\Pbf$ on $\Pbf^\der(\Rbb)^+$ passes to a left invariant measure $\mu_Y=\kappa_{y*}\nu_\Pbf$ on $Y^+$, which is independent of the choice of $y$.

  (3) Similarly, the orbit map $\kappa_y:\Gamma^\dag\bsh \Pbf^\der(\Rbb)^+\ra\Gamma\bsh Y^+\ \ \Gamma^\dag g\mapsto\Gamma gy$ is surjective with compact fibers. The push-forward $\kappa_{y^*}$ sends $\nu_\Omega$ to a canonical probability measure on $M$, independent of the choice of $y$.
\end{lemma}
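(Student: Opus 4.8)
The plan is to deduce all three assertions from two inputs: the transitivity of $\Pbf^\der(\Rbb)^+$ on $Y^+$, and the unimodularity of $\Pbf^\der(\Rbb)$ recorded in Lemma~\ref{harish-chandra}(2). First I would prove (1). Using that $\Vbf(\Rbb)$ is connected, write $Y^+=\Vbf(\Rbb)X^+$ and $\Pbf^\der(\Rbb)^+=\Vbf(\Rbb)\rtimes\Gbf^\der(\Rbb)^+$. Transitivity is then immediate from the group law of Lemma~\ref{group-law}: given $y=(v,x)$ and $y'=(v',x')$ in $Y^+$, pick $g\in\Gbf^\der(\Rbb)^+$ with $gx=x'$ — possible because $\Gbf^\der(\Rbb)^+$ surjects onto $\Gbf^{\ad}(\Rbb)^+$, which acts transitively on the Hermitian component $X^+$ — and set $w:=v'-\rho(g)v\in\Vbf(\Rbb)$, so that $(w,g)\cdot y=y'$. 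Writing $K_x$ for the isotropy group of $x$ in $\Gbf^\der(\Rbb)^+$ (a maximal compact subgroup, as the point-stabiliser on a Hermitian symmetric space), the group law gives the isotropy group of $y$ explicitly as $K_y=\{((1-\rho(g))v,\,g):g\in K_x\}$; this is a subgroup mapping isomorphically onto $K_x$ under $\pi$ and meeting $\Vbf(\Rbb)$ trivially. Since the vector group $\Vbf(\Rbb)$ contains no nontrivial compact subgroup, every maximal compact subgroup of $\Pbf^\der(\Rbb)^+$ injects into $\Gbf^\der(\Rbb)^+$; as $K_y\cong K_x$ is already maximal compact there, $K_y$ is maximal compact in $\Pbf^\der(\Rbb)^+$, and the fibres of $\kappa_y$ are the compact cosets $gK_y$.

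Next, (2) follows formally once $Y^+\cong\Pbf^\der(\Rbb)^+/K_y$ is in hand. By Lemma~\ref{harish-chandra}(2) the left Haar measure $\nu_\Pbf$ is also right-invariant, and $K_y$ is compact, so the pushforward $\mu_Y=\kappa_{y*}\nu_\Pbf$ is a well-defined locally finite measure; it is $\Pbf^\der(\Rbb)^+$-invariant because $h\cdot\kappa_y(g)=\kappa_y(hg)$ and $\nu_\Pbf$ is left-invariant. For independence of the base point I would write another point as $y'=g_0y$, observe $\kappa_{y'}=\kappa_y\circ R_{g_0}$ for the right translation $R_{g_0}$, and conclude $\kappa_{y'*}\nu_\Pbf=\kappa_{y*}\nu_\Pbf$ from right-invariance.

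Finally, for (3) the map $\Gamma^\dag g\mapsto\Gamma gy$ is well defined since $\Gamma^\dag\subset\Gamma$ and surjective by the transitivity of (1); that $\kappa_{y*}\nu_\Omega$ is a probability measure is automatic, since pushforward preserves total mass and $\nu_\Omega(\Omega)=1$ (Lemma~\ref{harish-chandra}(1) makes $\Gamma^\dag$ a lattice), and independence of $y$ again comes from right-translation invariance of $\nu_\Omega$, the translation $R_{g_0}$ descending to $\Omega$ because it commutes with the left $\Gamma^\dag$-action. The hard part is compactness of the fibres of the descended map: the fibre over $\Gamma z$ is the image in $\Omega$ of $\kappa_y^{-1}(\Gamma z)=\bigsqcup_{z'\in\Gamma z}g_{z'}K_y$, and modulo $\Gamma^\dag$ the surviving pieces are indexed by $\Gamma^\dag\bsh\Gamma z$. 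I expect the genuine obstacle to be showing this index set is finite, i.e.\ reconciling the quotient by $\Gamma$ defining $M$ with the quotient by $\Gamma^\dag$ defining $\Omega$. I would handle it by proving $[\Gamma:\Gamma^\dag]<\infty$: the quotient injects into $\Pbf(\Rbb)^+/\Pbf^\der(\Rbb)^+\cong(\Gbf/\Gbf^\der)(\Rbb)^+$, and the image of the congruence subgroup $\Gamma$ in the torus $T=\Gbf/\Gbf^\der$ is an arithmetic group; since $\Gamma$ acts properly discontinuously on $Y^+$, condition (MS-4) forces its action on the fibre $\Vbf(\Rbb)$ to avoid the split scaling directions and to land in the compact factor, so this image is both discrete and bounded, hence finite. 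Granting this, the fibre is a finite union of images of compact $K_y$-cosets, and therefore compact.
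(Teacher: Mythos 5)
Parts (1) and (2) of your proposal are correct. For (1) you take a more group-theoretic route than the paper: you exhibit $K_y$ explicitly as $\{((1-\rho(g))v,g):g\in K_x\}=(v,1)K_x(v,1)^\inv$ from the semidirect-product law and deduce maximality from the fact that a vector group has no nontrivial compact subgroups, whereas the paper identifies the stabilizer Hodge-theoretically ($P_y$ is the centralizer of the torus $y(\Sbb)$, whose Lie algebra is the $(0,0)$-part of $\Lie\Pbf_\Cbb$) and then reduces to the pure section. Both work; yours is more self-contained. Part (2) is essentially the paper's argument (left invariance for invariance, bi-invariance from \ref{harish-chandra}(2) for base-point independence).

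The genuine gap is in (3), exactly at the step you flag as the hard part. The claim $[\Gamma:\Gamma^\dag]<\infty$ is \emph{false} in the generality of the paper's axioms, and your justification misreads (MS-4): that axiom constrains only the image $\rho(Z^\circ\Gbf)$ of the connected center, not the torus $T=\Gbf/\Gbf^\der$ itself. Concretely, let $F$ be a real quadratic field, $T_1=\Ker(\Nm:\Res_{F/\Qbb}\Gbb_\mrm\ra\Gbb_\mrm)$, a $\Qbb$-anisotropic torus with $T_1(\Rbb)\isom\Rbb^*$ and $T_1(\Zbb)$ infinite. Take $\Gbf=T_1\times\GSp_4$ with $h=(\Nm,h_{\mathrm{std}}):\Sbb\ra\Gbf_\Rbb$, and let $\rho$ be the standard representation of $\GSp_4$ extended trivially on $T_1$. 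One checks (MS-1)--(MS-5) hold for $(\Pbf,Y)=\Vbf\rtimes_\rho(\Gbf,X)$: the adjoint Hodge structure is unaffected by $T_1$; (MS-4) holds because the connected center $T_1\times\Gbb_\mrm$ acts on $\Vbf$ through the split torus $\Gbb_\mrm$; and (MS-5) holds by a Goursat argument (no common nontrivial quotient of $T_1$ and $\GSp_4$) together with irreducibility of $\Vbf$. Yet for a congruence subgroup $\Gamma$ containing the totally positive part of $T_1(\Zbb)$, the quotient $\Gamma/\Gamma^\dag$ is infinite. Your finiteness claim is precisely the extra axiom (``SV5'', finiteness of arithmetic subgroups of $\Gbf/\Gbf^\der$) which is \emph{not} among the paper's hypotheses, so no proof of it can exist here.

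The lemma is nevertheless true, and your own bookkeeping already points to the repair: the fibre pieces are indexed not by $\Gamma^\dag\bsh\Gamma$ but by $\Gamma^\dag\bsh\Gamma z\isom\Gamma^\dag\bsh\Gamma/\Stab_\Gamma(z)$, and it is this double quotient that must be shown finite. The point is that the troublesome central directions stabilize every point: the $\Qbb$-subgroup $Z_1=\Ker(\rho|_{Z^\circ\Gbf})^\circ$ is central in all of $\Pbf$ (it commutes with $\Vbf$ since $\rho$ kills it) and acts trivially on $Y^+$ (trivially on $X^+$ because the action there factors through $\Gbf^\ad$, trivially on the fibres because $\rho(Z_1)=1$), so $\Gamma\cap Z_1(\Qbb)\subset\Stab_\Gamma(z)$ for every $z$. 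Modulo the image of $Z_1$, the torus $T$ becomes isogenous to $\rho(Z^\circ\Gbf)$, which by (MS-4) \emph{is} isogenous to a product of a compact torus with a split one; arithmetic subgroups of such tori are finite, and images of arithmetic groups under surjective $\Qbb$-morphisms are arithmetic (Borel), so $\Gamma^\dag\bsh\Gamma/\Stab_\Gamma(z)$ is finite. The fibre is then a finite union of sets homeomorphic to $K_y$, hence compact, as you wanted. (For comparison: the paper's own proof of (3) silently omits this point altogether --- it only constructs $\nu_\Omega$ and $\mu_M$ via fundamental domains and checks independence of the base point --- so you attempted strictly more than the paper proves, but the argument as written rests on a false intermediate claim.)
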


\begin{proof}
  (1) When $(\Pbf,Y;Y^+)=(\Gbf,X;X^+)$ is pure, it is clear that $\Gbf^\der(\Rbb)^+y=X^+$, because the center of $\Gbf(\Rbb)^+$ acts trivially on $X^+$. In this case $K_y$ is a maximal compact subgroup of $\Gbf^\der(\Rbb)^+$.

  In the Kuga case, fix a pure section $(\Gbf,X;X^+)$ given by some Levi decomposition. If $y$ lies in $X^+$ \wrt the pure section $(\Gbf,X;X^+)\mono(\Pbf,Y;Y^+)$, then weight filtration of the rational mixed Hodge structure on $\Lie\Pbf$ splits as the direct sum of $\Lie\Vbf$ of weight -1 with $\Lie\Gbf$ of weight 0. Let $P_y$ be the isotropy subgroup of $y$ in $\Pbf(\Rbb)$. Then $P_y$ is the centralizer of the torus $y(\Sbb)$, and $(\Lie P_y)_\Cbb$ is the part of Hodge type (0,0) in $\Lie\Pbf_\Cbb$, hence $P_y\subset\Gbf(\Rbb)$, and $K_y=P_y\cap\Pbf^\der(\Rbb)^+\subset\Gbf^\der(\Rbb)$.
  It is clear that $\Pbf^\der(\Rbb)^+y=Y^+$, as $Y^+\ra X^+$ is a $\Vbf(\Rbb)$-torsor with section, and $\Pbf^\der(\Rbb)^+=\Vbf(\Rbb)\rtimes\Gbf^\der(\Rbb)^+$ with $\Gbf^\der(\Rbb)^+y= X^+$.

  For the general case, it suffices to conjugate $y$ into the pure section $X^+$ by some $v\in\Vbf(\Rbb)$.

  (2) For the fixed base point $y\in Y^+$, the orbit map $\kappa_y:\Pbf^\der(\Rbb)^+\ra Y^+\ \ g\mapsto gy$ is surjective, whose fibers are compact subgroups of the form $\kappa_y^\inv(gy)=gK_yg^\inv$, $K_y$ being the isotropy subgroup of $y$. Consequently, $\mu_Y=\kappa_{y*}\nu_P$ is well-defined and $\Pbf^\der(\Rbb)$-invariant: $\mu_Y(A)=\nu_P(\kappa_y^\inv(A))$, and $\mu_Y(gA)=\nu_P(\kappa_y^\inv(gA))=\nu_P(g\kappa_y^\inv(A))=\mu_Y(A)$, for $A\subset Y^+$ measurable.

  It is independent of the choice of $y$: if we shift $y$ to some $hy$ with $h\in\Pbf^\der(\Rbb)^+$, and put $\mu'_Y:=\kappa_{hy*}\nu_P$, then $\mu'_Y(A)=\nu_P(\kappa_{hy}^\inv(A))=\nu_P(\kappa_y^\inv(A)h^\inv)=\mu_Y(A)$ for $A\subset Y^+$ measurable, as $\nu_P$ is left and right invariant by \ref{harish-chandra}(2).

  (3) The action of $\Gamma^\dag$ on $\Pbf^\der(\Rbb)^+$ admits a fundamental domain $\Fcal$, and $\nu_\Omega$ is defined as $A\mapsto \nu_P(\Fcal\cap\wp_\Gamma^\inv(A))$, for $A\subset\Omega$ measurable. The case of $M=\Gamma\bsh Y^+$ is similar by choosing a fundamental domain $\Dcal$. See for example \cite{harish-chandra-borel} Section.6 and \cite{borel-ji} III.2 for details on fundamental domains and reduction theories.

  The independence of $\mu_M$ on $y$ is also clear: if $A$ is a measurable subset of $M$, then $$\mu_M(A)=\mu_Y(\Dcal\cap\wp_\Gamma^\inv(A))=
  \nu_P(\kappa_y^\inv(\Dcal\cap\wp_\Gamma^\inv(A)))$$
  and when the base point is shifted to $gy$, one uses $g\Dcal$ as a fundamental domain to compute $\mu'_M=\kappa_{gy*}\nu_\Omega$: $$\mu'_M(A)=\mu_Y(g\Dcal\cap\wp_\Gamma^\inv(A))=
  \nu_P(\kappa_{gy}^\inv(g\Dcal\cap\wp_\Gamma^\inv(A)))
  =\nu_P(g\kappa_y^\inv(\Dcal)\cap g\kappa_y^\inv(\wp_\Gamma^\inv(A)))$$ which equals $\mu_M(A)$.\end{proof}

The construction of canonical measures is compatible with the following morphisms of connected Kuga varieties.

\begin{lemma}
  \label{measure-compatibility}

  Let $M=\Gamma\bsh Y^+$ and $\Omega=\Gamma^\dag\bsh\Pbf^\der(\Rbb)^+$ be as in \ref{kuga-lattice-space}.

(1) Let $M'\subset M$ be a special subvariety defined by $(\Pbf',Y';Y'^+)$, and take $y\in Y'^+\subset Y^+$. Then we have the commutative diagram $$\xymatrix{
  \Omega'\ar[d]^{\kappa_y}\ar[r]^{i} & \Omega \ar[d]^{\kappa_y}\\
  M'\ar[r]^{i} & M}$$ where the horizontal $i$'s are induced from the inclusion $(\Pbf',Y')\subset(\Pbf,Y)$. In particular, $\kappa_{y*}\nu_{\Omega'}=\mu_{M'}$.

  (2) Let $\pi:(\Pbf,Y;Y^+)\ra(\Gbf,X;X^+)$ be the projection onto the pure base, and take $\Gamma_\Gbf$ to be the image $\pi(\Gamma)$. Then under the projection $\pi:\Omega\ra\Omega_G:=\Gamma_\Gbf^\dag\bsh\Gbf^\der(\Rbb)^+$ and $\pi:M\ra S:=\Gamma_\Gbf\bsh X^+$ we have $\pi_*\nu_\Omega=\nu_{\Omega_\Gbf}$ and $\pi_*\mu_M=\mu_S$.
\end{lemma}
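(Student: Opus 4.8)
The plan is to reduce both assertions to the functoriality of the orbit map $\kappa_y$ recorded in \ref{constructing-canonical-measures}, together with the unimodularity of \ref{harish-chandra}(2), so that the only genuinely measure-theoretic input is the uniqueness of an invariant probability measure on a lattice space.

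\emph{Part (1).} First I would check that the horizontal arrows are well defined. Since $\Pbf'\subset\Pbf$ forces $\Pbf'^\der\subset\Pbf^\der$, the identity components satisfy $\Pbf'^\der(\Rbb)^+\subset\Pbf^\der(\Rbb)^+$, and from $\Gamma'=\Gamma\cap\Pbf'(\Rbb)_+$ one gets $\Gamma'^\dag=\Gamma'\cap\Pbf'^\der(\Rbb)^+\subset\Gamma\cap\Pbf^\der(\Rbb)^+=\Gamma^\dag$; hence $i\colon\Omega'\ra\Omega,\ \Gamma'^\dag g\mapsto\Gamma^\dag g$ is well defined, as is $i\colon M'\ra M,\ \Gamma'y'\mapsto\Gamma y'$. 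Commutativity is an element chase: for $g\in\Pbf'^\der(\Rbb)^+$ both routes send $\Gamma'^\dag g$ to $\Gamma gy$. For the measure identity I read $\kappa_{y*}\nu_{\Omega'}$ as the push-forward under $\kappa_y\colon\Omega\ra M$ of the measure $\nu_{\Omega'}$ regarded on $\Omega$ as in \ref{kuga-lattice-space}(2). This equals $(\kappa_y\circ i)_*$ of the intrinsic canonical measure of $\Omega'$, which by commutativity is $(i\circ\kappa_y)_*$ of the same measure. Applying \ref{constructing-canonical-measures}(3) to the connected Kuga variety $M'=\Gamma'\bsh Y'^+$ identifies the push-forward of $\nu_{\Omega'}$ into $M'$ with the intrinsic canonical measure of $M'$, and pushing it forward by $i\colon M'\ra M$ yields exactly $\mu_{M'}$ by the definition in \ref{kuga-lattice-space}(3). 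Thus the two measures coincide.

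\emph{Part (2).} The core is the identity $\pi_*\nu_\Omega=\nu_{\Omega_\Gbf}$. By \ref{group-law}(2) we have $\Pbf^\der=\Vbf\rtimes_\rho\Gbf^\der$, so $\Pbf^\der(\Rbb)^+=\Vbf(\Rbb)\rtimes\Gbf^\der(\Rbb)^+$ and $\pi$ is surjective onto $\Gbf^\der(\Rbb)^+$ with kernel $\Vbf(\Rbb)$; moreover $\pi(\Gamma^\dag)\subset\pi(\Gamma)\cap\Gbf^\der(\Rbb)^+=\Gamma_\Gbf^\dag$, so $\pi$ descends to $\Omega\ra\Omega_\Gbf$, and likewise to $M\ra S$. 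Because $\Pbf^\der(\Rbb)$ is unimodular by \ref{harish-chandra}(2), the measure $\nu_\Omega$ is invariant under right translation by $\Pbf^\der(\Rbb)^+$; as $\pi$ is right-equivariant and surjective on groups, $\pi_*\nu_\Omega$ is a Borel probability measure on $\Omega_\Gbf$ invariant under right translation by $\Gbf^\der(\Rbb)^+$. Since this right action on the homogeneous space $\Gamma_\Gbf^\dag\bsh\Gbf^\der(\Rbb)^+$ is transitive, an invariant probability measure is unique, whence $\pi_*\nu_\Omega=\nu_{\Omega_\Gbf}$.

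Finally I would transfer the identity to the Kuga varieties. Choosing the base point $y$ in the pure section $X^+\subset Y^+$ (so that $\pi(y)=y$), the orbit maps fit into a commutative square with horizontal arrows $\pi$ and vertical arrows $\kappa_y,\kappa_{\pi(y)}$, again checked by an element chase sending $\Gamma^\dag g$ to $\Gamma_\Gbf\pi(g)y$ along both routes. Then
$$\pi_*\mu_M=\pi_*\kappa_{y*}\nu_\Omega=\kappa_{\pi(y)*}\pi_*\nu_\Omega=\kappa_{\pi(y)*}\nu_{\Omega_\Gbf}=\mu_S,$$
the last step by \ref{constructing-canonical-measures}(3) applied to $S$. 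I expect the main obstacle to be precisely the identity $\pi_*\nu_\Omega=\nu_{\Omega_\Gbf}$: one must confirm that the push-forward remains a probability measure and is genuinely right-invariant, and that uniqueness applies. The alternative of a direct Fubini computation along the fibration $\Pbf^\der(\Rbb)^+\ra\Gbf^\der(\Rbb)^+$ is awkward exactly because $\Gamma^\dag$ need not split as a product of lattices in $\Vbf(\Rbb)$ and $\Gbf^\der(\Rbb)^+$, which is why the uniqueness route is the cleaner one.
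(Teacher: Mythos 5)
Your proof is correct, but part (2) takes a genuinely different route from the paper's. For part (1) the paper simply declares the statement clear, and your unwinding (well-definedness of the horizontal maps, the element chase, then \ref{constructing-canonical-measures}(3) applied to $M'=\Gamma'\bsh Y'^+$ combined with the definition of $\mu_{M'}$ in \ref{kuga-lattice-space}(3)) is exactly the intended content. For part (2), however, the paper performs precisely the fundamental-domain computation you set aside as awkward: it chooses a fundamental domain $\Fcal_\Gbf$ for $\Gamma_\Gbf^\dag=\pi(\Gamma^\dag)$ on $\Gbf^\der(\Rbb)^+$ and one $\Fcal_\Vbf$ for the kernel lattice $\Gamma_\Vbf=\Ker(\Gamma^\dag\ra\Gamma_\Gbf^\dag)=\Gamma\cap\Vbf(\Qbb)$ on $\Vbf(\Rbb)$, observes that $\Fcal_\Vbf\rtimes\Fcal_\Gbf$ is a fundamental domain for $\Gamma^\dag$ on $\Pbf^\der(\Rbb)^+$, and evaluates $\nu_\Omega(\pi^\inv(A))=\nu_{\Omega_\Gbf}(A)$ via the product decomposition of the Haar measure. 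Your stated reason for avoiding this route is unfounded: no splitting $\Gamma^\dag\isom\Gamma_\Vbf\rtimes\Gamma_\Gbf^\dag$ is needed, since surjectivity of the $\Gamma^\dag$-orbits onto $\Fcal_\Vbf\rtimes\Fcal_\Gbf$ only uses lifts of elements of $\Gamma_\Gbf^\dag$ to $\Gamma^\dag$ followed by a $\Gamma_\Vbf$-translation in the fiber, and essential injectivity only uses that elements of $\Gamma^\dag$ mapping to $1$ lie in $\Gamma_\Vbf$. Your substitute argument --- right-invariance of $\nu_\Omega$ from unimodularity (\ref{harish-chandra}(2)), equivariance and surjectivity of $\pi$ on real points, then uniqueness of the $\Gbf^\der(\Rbb)^+$-invariant Borel probability measure on the homogeneous space $\Gamma_\Gbf^\dag\bsh\Gbf^\der(\Rbb)^+$ --- is correct and softer: it buys you an argument with no fundamental domains at all, one that transports verbatim to any surjection of unimodular groups carrying lattice into lattice, at the cost of invoking the uniqueness theorem for invariant measures on homogeneous spaces, which the paper's elementary and self-contained computation does not need. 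Your deduction of $\pi_*\mu_M=\mu_S$ from the commuting square with base point chosen in the pure section is fine (the paper treats this case only as ``similar''), and the base-point independence in \ref{constructing-canonical-measures}(3) indeed makes the restriction to $y\in X^+$ harmless.
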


\begin{proof}

(1) is clear. To see (2) for $\pi:\Omega\ra\Omega_\Gbf$, we consider the fundamental domains $\Fcal_G$ for $\Gamma^\dag_\Gbf=\pi(\Gamma^\dag)$ on $\Gbf^\der(\Rbb)^+$, and $\Fcal_V$ for $\Gamma_\Vbf:=\Ker(\Gamma^\dag\ra\Gamma_\Gbf^\dag)=\Gamma\cap\Vbf(\Qbb)$ on $\Vbf(\Rbb)$. Then $\Fcal=\Fcal_\Vbf\rtimes \Fcal_\Gbf:=\{(v,g):v\in\Fcal_\Vbf,g\in\Fcal_\Gbf\}$ is a fundamental domain for $\Gamma$ on $\Pbf^\der(\Rbb)^+$.

$(\Pbf^\der(\Rbb)^+,\nu_\Pbf)$ is isomorphic to the direct product of $(\Vbf(\Rbb),\nu_\Vbf)$ with $(\Gbf^\der(\Rbb)^+,\nu_\Gbf)$ as measurable spaces.  Hence for $A\subset \Omega_\Gbf$ measurable, $\nu_{\Omega_\Gbf}(A)=\nu_\Gbf(\Fcal_\Gbf\cap\wp_{\Gamma_\Gbf}^\inv(A))
=\nu_\Vbf(\Fcal_\Vbf)\times\nu_\Gbf(\Fcal_\Gbf\cap\wp_{\Gamma_\Gbf}^\inv(A))
=\nu_\Pbf(\Fcal_\Vbf\times(\Fcal_\Gbf\cap\wp{\Gamma_\Gbf}^\inv(A)))
=\nu_\Pbf(\Fcal\cap\wp_\Gamma(\pi^\inv(A)))=\nu_\Omega(\pi^\inv(A))$.

The case for $M\ra S$ is similar. \end{proof}

We end this section with the example of Siegel modular varieties and their universal Kuga families, more details for which can be found in \cite{pink-thesis} 2.7, 2.8, 2.24, 2.25, and Kuga's book \cite{kuga-book}. We also mention the case of a CM fiber, which motivates the definition of $\rho$-rigidity in the next section.

\begin{example} \label{example}

Let $(\Vbf,\psi)$ be a (non-degenerate) symplectic space over $\Qbb$. Then we have the pure Shimura datum $(\GSp(\Vbf),\Hscr(\Vbf))$ and the Kuga datum $(\Pcal(\Vbf),\Ycal(\Vbf))=\Vbf\rtimes_\rho(\GSp(\Vbf),\Hscr(\Vbf))$. Take $\Gamma=\Gamma_\Vbf\rtimes\Gamma_\Gbf$ a congruence subgroup of $\Pcal(\Vbf)$ respecting the Levi decomposition, then the associated lattice spaces are $\Gamma_\Gbf^\dag\bsh\Sp(\Vbf)(\Rbb)$ and $\Gamma^\dag\bsh\Vbf(\Rbb)\rtimes\Sp(\Vbf)(\Rbb)$.

Take $(\Tbf,x)\subset(\GSp(\Vbf),\Hscr(\Vbf))$ a toric subdatum. Then $x$ is a single point,   $(\Tbf,x)$ does not contain proper subdatum, and the subdata of $\Vbf\rtimes_\rho(\Tbf,x)$ are of the form $\Vbf'\rtimes_\rho(v\Tbf v^\inv,v\rtimes x)$ for $v\in\Vbf(\Qbb)$ and $\Vbf'\subset\Vbf$ subrepresentation of $\Vbf$ over $\Qbb$ for the action of $\Tbf$. It follows that the lattice subspaces associated to such subdata are of the form $\wp_\Gamma(\Vbf'(\Rbb))$, and the translation by $v\in\Vbf(\Qbb)$ is not reflected at the level of lattice space.

The moduli interpretation of these constructions is well-known. Assume for simplicity that $(\Vbf,\psi)$ is the standard symplectic space with $\psi$ given by the matrix $\small{\begin{bmatrix}0 & -I_g\\ I_g &0\end{bmatrix}}$ with $I_g$ the $g\times g$ identity matrix, $g=\frac{1}{2}\dim_\Qbb\Vbf$. We may identify $\GSp(\Vbf)$ with $\GSp_{2g}$, and $\Hscr(\Vbf)$ with the Siegel double spaces $\Hscr_g$ of genus $g$. Take $K(N)$ the kernel of the reduction modulo $N$: $\GSp_{2g}(\Zbhat)\ra\GSp_{2g}(\Zbb/N)$ with $N\in\Zbb_{>6}$, then the Shimura variety $M_{K(N)}(\GSp_{2g},\Hscr_g)$ is the Siegel  moduli space $\Acal_{g,N}$ parameterizing principally polarized abelian varieties of dimension $g$ with full level-$N$ structure, and the mixed Shimura variety $\Xcal_{g,N}:=M_{\Vbf(\Zbhat)\rtimes K(N)}(\Pcal(\Vbf),\Ycal(\Vbf))$ is the universal family of abelian varieties over $\Acal_{g,N}$ (where $\Vbf(\Zbhat)$ is defined \wrt the integral structure for a suitable  basis of $\Vbf$).

Let $s$ be a special point in $\Acal_{g,N}$ given by the toric subdatum $(\Tbf,x)$, corresponding to some CM abelian variety with Mumford-Tate group equal to $\Tbf$. Then this CM abelian variety is the fiber of $\Xcal_{g,N}\ra\Acal_{g,N}$ at $s$, which can be obtained as a special subvariety associated to $\Vbf\rtimes(\Tbf,x)$. Torsion subvarieties in this CM abelian variety can also be realized as special subvarieties of $\Xcal_{g,N}$.
\end{example}
\bigskip

\section{Rigid $\Cbf$-special objects}

In this section we introduce auxiliary conditions on Kuga subdata, which are aimed to keep the unipotent translation fully reflected in the associated lattice subspaces and thus to avoid the case of CM fibers in \ref{example} as much as possible.

\begin{definition} \label{rho-rigid-c-special}
  Let $(\Pbf,Y;Y^+)=\Vbf\rtimes_\rho(\Gbf,X;X^+)$ be a Kuga datum, and $\pi:(\Pbf,Y;Y^+)\ra(\Gbf,X;X^+)$ the canonical projection onto its maximal pure quotient. For $\Gamma\subset\Pbf(\Rbb)^+$   a  congruence subgroup, we have $M=\Gamma\bsh Y^+$ the Kuga variety with associated lattice space $\Omega=\Gamma^\dag\bsh\Pbf^\der(\Rbb)^+$.
  %, and write $\rho$ for the representation of $\Gbf$ on $\Vbf$ the unipotent radical of $\Pbf$ given by the Levi decomposition.

  (i) $\Cbf$-special sub-objects (\wrt $\pi$):
 Fix $\Cbf$ a $\Qbb$-torus in $\Gbf$

     (i-0) A reductive $\Qbb$-subgroup $\Gbf'$ of $\Gbf$ is $\Cbf$-special if $\Cbf$ equals the connected center of $\Gbf'$.

     (i-1) A pure subdatum $(\Gbf',X')$ of $(\Gbf,X)$ is $\Cbf$-special if $\Gbf'$ is $\Cbf$-special. A Kuga subdatum $(\Pbf',Y')$ of $(\Pbf,Y)$ is $\Cbf$-special if the image $\pi(\Pbf')$ is $\Cbf$-special.
     The case of connected Kuga subdatum is similar.

  (i-2) A lattice subspace $\Omega'$ of $\Omega$ is said to be $\Cbf$-special if it is associated to some connected $\Cbf$-special subdatum $(\Pbf',Y';Y'^+)$ of $(\Pbf,Y;Y^+)$, i.e. $\Omega'=\wp_\Gamma(\Pbf'^\der(\Rbb)^+)$.

  A measure on $\Omega$ is $\Cbf$-special if it is the canonical measure of some $\Cbf$-special lattice subspace.

  (i-3) Similarly, $\Cbf$-special subvarieties of $M$ are those defined by $\Cbf$-special subdata, and the same is understood for the notion of $\Cbf$-special measures on $M$.

Note that by \ref{mixed-shimura-datum} (1) (MS-4), $\Cbf$ is required to acts on $\Vbf$, hence on any $\Cbf$-stable $\Qbb$-subspace of $\Vbf$, through some $\Qbb$-torus isogeneous to the product of a compact one with a split one.

  (ii) $\rho$-rigid subobjects:

  (ii-0) A reductive $\Qbb$-subgroup $\Gbf'$ of $\Gbf$ is $\rho$-rigid if the representation (by restriction) $\rho:\Gbf'^\der\ra\GLbf_\Qbb(\Vbf)$ does not admit trivial subrepresentations (of dimension $> 0$). Note that the condition is non-trivial even when we take $\Gbf'=\Gbf$; moreover, if $\Gbf$ admits a $\rho$-rigid $\Qbb$-subgroup $\Gbf'$, then $\Gbf$ is $\rho$-rigid itself, as $\Gbf'^\der\subset\Gbf^\der$.

  (ii-1) A pure subdatum $(\Gbf',X')$ of $(\Gbf,X)$ is $\rho$-rigid if so it is with the $\Qbb$-group $\Gbf'$. A Kuga subdatum $(\Pbf',Y')$ is of $\rho$-rigid if so it is with the reductive $\Qbb$-subgroup $\pi(\Pbf')$. The case of connected Kuga subdata is parallel.

  (ii-2) A lattice subspace $\Omega'$ of $\Omega$ is of non-CM type if it is defined by some $\rho$-rigid Kuga subdatum. A $\rho$-rigid special measure on $\Omega$ is defined as  the canonical measure associated to some $\rho$-rigid lattice subspace.

  (ii-3) Similarly, the notion of $\rho$-rigid special subvarieties in $M$ and $\rho$-rigid special measures  on $M$ are defined as those associated to $\rho$-rigid Kuga subdata.

  (iii) Finally for $S$ being either $\Omega$ or $M$, we denote by $\Pscr'(S)$ the set of $\rho$-rigid $\Cbf$-special measures on $S$, viewed as a subset of $\Pscr(S)$ the set of Borelian probability measures on $S$ endowed with the weak topology.
\end{definition}

\begin{example}

Let $(\Vbf,\psi)$ be the symplectic space as in \ref{example}, then the Kuga datum $(\Pcal(\Vbf),\Ycal(\Vbf))=\Vbf\rtimes_\rho(\GSp(\Vbf),\Hscr(\Vbf))$ is \rhorig, because the action of $\Sp(\Vbf)$ on $\Vbf$ does not admit trivial subrepresentations.

On the other hand, the subdatum $\Vbf\rtimes_\rho(\Tbf,x)$ is not \rhorig, as the derived group of $\Tbf$ is trivial. 
\end{example} 

\begin{lemma}Let $(\Pbf,Y)=\Vbf\rtimes_\rho(\Gbf,X)$ be a Kuga datum, with $\rho:\Gbf\ra\GLbf_\Qbb(\Vbf)$ faithful. If the connected center of $\Gbf$ splits over $\Qbb$, then $(\Pbf,Y)$ is $\rho$-rigid, namely $\Gbf^\der$ acts on $\Vbf$ without trivial subrepresentations (of dimension $> 0$).
\end{lemma}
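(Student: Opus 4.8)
The plan is to argue by contradiction, reducing $\rho$-rigidity to a statement about the Hodge type of a hypothetical $\Gbf^\der$-fixed subspace. First I would note that, since $\Gbf^\der$ is normal in $\Gbf$, the fixed subspace $\Vbf':=\Vbf^{\Gbf^\der}$ is a $\Qbb$-subrepresentation of $\Gbf$ (for $g\in\Gbf$, $v\in\Vbf'$, $h\in\Gbf^\der$ one has $h(gv)=g((g^\inv h g)v)=gv$), and that it is the maximal trivial $\Gbf^\der$-subrepresentation. Because taking invariants under the connected group $\Gbf^\der$ commutes with extension of scalars, having no nonzero trivial $\Qbb$-subrepresentation is equivalent to $\Vbf'=0$. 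So by Definition \ref{rho-rigid-c-special}(ii-0) it suffices to assume $\Vbf'\neq 0$ and derive a contradiction.

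Next I would identify the group through which $\Gbf$ acts on $\Vbf'$. Since $\Gbf^\der$ acts trivially, the representation $\Gbf\ra\GLbf_\Qbb(\Vbf')$ factors through the abelianization $\Tbf:=\Gbf/\Gbf^\der$, which is a $\Qbb$-torus. Here the hypothesis enters: writing $\Gbf=\Gbf^\der\cdot Z^\circ\Gbf$ (almost direct product for connected reductive $\Gbf$), the composite $Z^\circ\Gbf\mono\Gbf\ra\Tbf$ is surjective, so $\Tbf$ is a quotient of the connected center $Z^\circ\Gbf$. A quotient torus of a $\Qbb$-split torus is again $\Qbb$-split (the character lattice injects with trivial Galois action), so $\Tbf$ is split over $\Qbb$. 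Consequently all characters of $\Tbf$ are rational, and $\Vbf'_\Cbb$ decomposes as a direct sum of $\Tbf$-eigenspaces $\Vbf'_\chi$ indexed by characters $\chi\in\Hom(\Tbf,\Gbb_\mrm)$.

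The crux is a Hodge-theoretic parity obstruction. Fix $x\in h(X)$; as $\Vbf'$ is $\Gbf$-stable it is $x(\Sbb)$-stable, hence a rational sub-Hodge structure of $(\Vbf,\rho\circ x)$, so it is pure of weight $-1$ with type contained in $\{(-1,0),(0,-1)\}$, and its Hodge cocharacter factors as $\Sbb\ot{\bar x}\lra\Tbf_\Rbb\ra\GLbf(\Vbf'_\Rbb)$. For each $\chi$ the composite $\chi\circ\bar x:\Sbb\ra\Gbb_{\mrm,\Rbb}$ is a homomorphism from the Deligne torus to a $\Qbb$-split rank-one torus; I would check that every such homomorphism is an integral power of the norm $\Nm:\Sbb\ra\Gbb_{\mrm,\Rbb}$ (dually, a Galois-equivariant map $\Zbb\ra\Zbb^2$ must hit the diagonal, the Galois action swapping the two factors of $X^*(\Sbb)$). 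Writing $\chi\circ\bar x=\Nm^{a_\chi}$, the eigenspace $\Vbf'_\chi$ is then pure of Hodge type $(-a_\chi,-a_\chi)$, i.e. of even weight $-2a_\chi$. But $\Vbf'$ is pure of weight $-1$, which is odd, so $-2a_\chi=-1$ is impossible and every eigenspace vanishes; hence $\Vbf'=0$, contradicting $\Vbf'\neq 0$. This proves $\rho$-rigidity.

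The main obstacle is this last step: pinning down that a homomorphism $\Sbb\ra\Gbb_{\mrm,\Rbb}$ must be a power of the norm and translating this into the parity of the Hodge weight (equivalently, that the eigenspaces are of diagonal type $(p,p)$, which never occur in $\{(-1,0),(0,-1)\}$); one must be careful with the sign conventions governing the action of $\Sbb$, though the conclusion "even weight versus odd weight" is convention-independent. I do not expect to need faithfulness of $\rho$ for this reduction; if one instead prefers to argue through the weight cocharacter, faithfulness guarantees that $w_x$ is $\Qbb$-rational and central, landing inside the split torus $Z^\circ\Gbf$, which gives the same contradiction.
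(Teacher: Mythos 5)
Your proof is correct and takes essentially the same approach as the paper: both argue by contradiction that the $\Gbf^\der$-fixed subspace is a rational Hodge substructure of type $\{(-1,0),(0,-1)\}$ on which $\Gbf$ acts through the split quotient torus $\Gbf/\Gbf^\der$, forcing the Hodge structure to be of diagonal (even-weight) type, which is absurd. The only difference is in implementing this parity step: the paper restricts to $\Sbb^1$ and uses that any homomorphism from the anisotropic circle to a split torus is trivial, while you decompose into $\Tbf$-eigenspaces and use $\Hom(\Sbb,\Gbb_{\mrm,\Rbb})=\Nm^{\Zbb}$ --- two dual formulations of the same fact.
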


\begin{proof}
Assume $\Ubf:=\Vbf^{\Gbf^\der}\neq0$. Then $\Ubf$ is a subrepresentation of $\Vbf$ under $\Gbf$. In particular, for any $x\in X$, $\rho\circ x:\Sbb\ra\Gbf_\Rbb\ra\GLbf_\Rbb(\Ubf_\Rbb)$ defines a rational Hodge substructure of $(\Vbf,\rho\circ x)$, hence of Hodge type $\{(-1,0),(0,-1)\}$.

On the other hand, $\rho\circ x:\Sbb^1\mono\Sbb\ra\Gbf_\Rbb\ra\GLbf_\Rbb(\Ubf_\Rbb)$ factors through $\Gbf^\der_\Rbb$ because the connected center of $\Gbf$ already splits over $\Rbb$. The triviality of the action of $\Gbf^\der$ on $\Ubf$ implies that $\Sbb^1\ra\GLbf_\Rbb(\Ubf_\Rbb)$ is also trivial, and thus $(\Ubf,\rho\circ x)$ is of even weight, which contradicts Hodge type $\{(-1,0),(0,-1)\}$ of $(\Ubf,\rho\circ x)$ computed above.
\end{proof}

\begin{remark}
If $(\Pbf,Y)=\Vbf\rtimes_\rho(\Gbf,X)$ is a Kuga datum with $\rho$ faithful, then the connected center of $\Gbf$ has to be isogeneous to a product of the form $\Hbf\times\Gbb_\mrm^d$ for some compact $\Qbb$-torus $\Hbf$, as is required by \ref{mixed-shimura-datum} (MS-4). If the condition (MS-4) is removed, then the lemma above still makes sense if we only require the connected center of $\Gbf$ to split over $\Rbb$, or simply has some totally real field as the splitting field.
\end{remark}

% the $\rho$-rigidity of $\Cbf$-special subdata acturally only depends the rectriction of $\rho$ to $\Cbf$. this is removed as it is not needed in the sequel

For $(\Pbf',Y')=\Vbf'\rtimes_\rho(v\Gbf'v^\inv,v\rtimes X')$ a subdatum of $(\Pbf,Y)=\Vbf\rtimes_\rho(\Gbf,X)$, we have shown in \ref{subdatum-recovery} that $v$ is unique up to translation by $\Vbf'$. Using similar arguments, we can show that $\rho$-rigid $\Cbf$-special subdata can be recovered from the corresponding special lattice subspaces:

\begin{lemma} \label{injectivity}
  Let $\Omega=\Gamma^\dag\bsh\Pbf^\der(\Rbb)^+$ be the lattice space associated to a Kuga variety $M$ defined by $(\Pbf,Y;Y^+,\Gamma)$, and $\Cbf$ a fixed $\Qbb$-torus in $\Gbf$. Let $\Ccal$  be the set of $\rho$-rigid $\Cbf$-special Mumford-Tate $\Qbb$-subgroups of $\Pbf$, namely   $\Qbb$-subgroups $\Pbf'$ coming from  $\rho$-rigid $\Cbf$-special subdata
 $(\Pbf',Y')$of $(\Pbf,Y)$. Then the following map
  $$\Ccal\ra\{\mathrm{lattice\ subspaces\ of\ }\Omega\}\ \ \Pbf'\mapsto\wp_\Gamma(\Pbf'^\der(\Rbb)^+)$$
  is injective.
\end{lemma}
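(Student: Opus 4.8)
The plan is to reconstruct the $\Qbb$-group $\Pbf'$ from the lattice subspace $\wp_\Gamma(\Pbf'^\der(\Rbb)^+)$ in stages, inverting the map one layer at a time: first recover the real group $\Pbf'^\der(\Rbb)^+$ from its image, then the $\Qbb$-group $\Pbf'^\der$ by Zariski density, and finally the full group $\Pbf'$ from its derived group using the two standing hypotheses. Concretely, suppose $\Pbf_1',\Pbf_2'\in\Ccal$ satisfy $\wp_\Gamma(\Pbf_1'^\der(\Rbb)^+)=\wp_\Gamma(\Pbf_2'^\der(\Rbb)^+)$; I must show $\Pbf_1'=\Pbf_2'$.

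First I would recover the real groups. Writing $\Hbf_i=\Pbf_i'^\der$, by \ref{kuga-lattice-space}(2) each lattice subspace $\Omega_{\Hbf_i}$ is closed and is itself the lattice space of $(\Hbf_i,\Gamma^\dag\cap\Hbf_i(\Rbb)^+)$, hence a closed embedded submanifold of $\Omega$. Its preimage under the uniformization map is the closed set $\Gamma^\dag\Hbf_i(\Rbb)^+=\bigsqcup_\gamma\gamma\Hbf_i(\Rbb)^+$, a disjoint union of translates of the connected group $\Hbf_i(\Rbb)^+$; since $\Gamma^\dag$ is discrete and this union is closed, $\Hbf_i(\Rbb)^+$ is exactly the connected component through the identity. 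As the two preimages coincide, their identity components agree: $\Hbf_1(\Rbb)^+=\Hbf_2(\Rbb)^+$. Because $\Hbf_i(\Rbb)^+$ is of finite index in the real points of the connected $\Qbb$-group $\Hbf_i$, it is Zariski dense, so passing to Zariski closures gives $\Hbf_1=\Hbf_2=:\Hbf$.

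The core of the argument is to recover $\Pbf'$ from $\Hbf=\Pbf'^\der$, where $\Pbf'=\Vbf'\rtimes_\rho(v\Gbf'v^\inv)$ and $\Pbf'^\der=\Vbf'\rtimes_\rho(v\Gbf'^\der v^\inv)$ by \ref{group-law}(2). The unipotent radical $\Vbf'$ is intrinsic to $\Hbf$, and applying $\pi:\Pbf\ra\Gbf$ (which kills $\Vbf$ and satisfies $\pi(vgv^\inv)=g$ for $g\in\Gbf'$) yields $\pi(\Hbf)=\Gbf'^\der$ as a well-defined subgroup of $\Gbf$, independent of the choice of Levi. The $\Cbf$-special hypothesis then forces $Z^\circ\Gbf'=\Cbf$ for the fixed torus $\Cbf$, so $\Gbf'=\Gbf'^\der\cdot\Cbf$ is determined by $\Hbf$ alone; thus $\Vbf_1'=\Vbf_2'=:\Vbf'$ and $\Gbf_1'=\Gbf_2'=:\Gbf'$. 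It remains to pin down the translation $v_i$ modulo $\Vbf'$. Since $v_1\Gbf'^\der v_1^\inv$ and $v_2\Gbf'^\der v_2^\inv$ are both Levi subgroups of $\Hbf$, they are conjugate by some $w\in\Vbf'(\Qbb)$; rerunning the group-law computation of \ref{subdatum-recovery} shows that $u:=w+v_1-v_2\in\Vbf(\Qbb)$ is fixed by $\Gbf'^\der$, i.e. $u\in\Vbf^{\Gbf'^\der}$.

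Here is where $\rho$-rigidity does its work: by definition it says precisely that $\Gbf'^\der$ admits no nonzero trivial subrepresentation in $\Vbf$, so $\Vbf^{\Gbf'^\der}=0$ and hence $u=0$. This gives $v_2-v_1=w\in\Vbf'(\Qbb)$, so $v_1\equiv v_2\pmod{\Vbf'}$ and therefore $\Pbf_1'=\Vbf'\rtimes v_1\Gbf'v_1^\inv=\Vbf'\rtimes v_2\Gbf'v_2^\inv=\Pbf_2'$, as desired. The main obstacle I expect lies in the first stage: ensuring that the coset decomposition of $\Gamma^\dag\Hbf(\Rbb)^+$ is locally finite near the identity, so that $\Hbf(\Rbb)^+$ really is the connected component through $e$; this rests on the closedness of the lattice subspace guaranteed by \ref{kuga-lattice-space}(2) together with the fact that $\Gamma^\dag\cap\Hbf(\Rbb)^+$ is a lattice in $\Hbf(\Rbb)^+$. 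The genuinely new content is the final stage, where the two hypotheses play complementary roles: $\Cbf$-specialness reattaches the central torus to reconstruct $\Gbf'$ from $\Gbf'^\der$, while $\rho$-rigidity removes the fixed-vector ambiguity to reconstruct the unipotent translation $v$.
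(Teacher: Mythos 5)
Your proposal is correct and follows essentially the same route as the paper: the paper recovers $\Pbf_1'^\der=\Pbf_2'^\der$ by computing the tangent space of the common lattice subspace at the point $\Gamma^\dag e$ (where you instead pass to identity components of the preimage $\Gamma^\dag\Hbf(\Rbb)^+$, a variant that can be made rigorous), and then, exactly as you do, it identifies $\Vbf_1'=\Vbf_2'$ as the common unipotent radical, recovers $\Gbf_1'=\Gbf_2'$ from the shared connected center $\Cbf$, and uses the $\Vbf'(\Qbb)$-conjugacy of the Levi subgroups $v_i\Gbf'^\der v_i^\inv$ together with $\rho$-rigidity to conclude $v_1-v_2\in\Vbf'(\Qbb)$. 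The complementary roles you assign to $\Cbf$-specialness and $\rho$-rigidity are precisely those in the paper's argument.
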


\begin{proof}
  Assume that two $\rho$-rigid $\Cbf$-special subdata $(\Pbf_i,Y_i)$ ($i=1,2$) of $(\Pbf,Y)$ give the same lattice space $\Omega'$ under the map above. Then $\Pbf_1^\der=\Pbf_2^\der$ holds by computing the tangent space of $\Omega'$ at the specific point $\Gamma^\dag e$, $e$ being the neutral element of $\Pbf^\der(\Rbb)^+$. It remains to extend the equality into $\Pbf_1=\Pbf_2$.

  We take the decomposition $\Pbf_i=\Vbf_i\rtimes(v_i \Gbf_iv_i^\inv)$ in the sense of \ref{subdatum-recovery}, with $v_i\in\Vbf(\Qbb)$, $\Gbf_i=\pi(\Pbf_i)$ $\Cbf$-special $\Qbb$-subgroup of $\Gbf$, and $\Vbf_i$ a subrepresentation of $\Vbf$ under $\Gbf_i$. Then $\Vbf_1=\Vbf_2$ as the common unipotent radical of $\Pbf_i^\der$, and $\Gbf_1^\der=\pi(\Pbf_1^\der)=\Gbf_2^\der$. Hence $\Gbf_1=\Gbf_2$ because they are known to share the common connected center $\Cbf$.

  To obtain the equality $\Pbf_1=\Pbf_2$, it suffices to show that $v_1\Gbf_1v_1^\inv$ and $v_2\Gbf_2v_2^\inv$ are conjugate under $\Vbf_1(\Qbb)$. Note that we already have $\Pbf_1^\der=\Pbf_2^\der$, which is nothing but
  $$\Vbf_1\rtimes(v_1\Gbf_1^\der v_1^\inv)=\Vbf_1\rtimes(v_2\Gbf^\der_1v_2^\inv)$$ Now that $v_1\Gbf^\der_1v_1^\inv$ and $v_2\Gbf^\der_1v_2^\inv$ are both Levi $\Qbb$-subgroups of $\Pbf_1^\der$, they are conjugate under $\Vbf_1(\Qbb)$, hence for some $v\in\Vbf_1(\Qbb)$ we have $$(v+v_1-v_2)\Gbf^\der_1(v+v_1-v_2)^\inv=\Gbf^\der_1$$ By the group law formula in \ref{group-law}, this equality implies that $v+v_1-v_2$ is fixed by the action of $\Gbf_1^\der$ (through $\rho$). $\Gbf_1$ is known to be $\rho$-rigid, therefore $v+v_1-v_2=0$, i.e. $v_1-v_2\in\Vbf_1(\Qbb)$, whence the equality $\Pbf_1=\Pbf_2$.\end{proof}

\begin{remark}
  If the condition of $\rho$-rigidity is not satisfied, one may find different $\Cbf$-special subdata giving rise to the same $\Cbf$-special lattice subspace, see for example the torsion subvarieties in a CM fiber in \ref{example}.

  For a general mixed Shimura datum $(\Pbf,\Ubf,h:Y\ra\Yfrak(\Pbf))$ with maximal pure quotient $(\Gbf,X)$ such that $\Ubf\neq0$, it is showed in \cite{pink-thesis} 2.14 that if we choose a Levi decomposition $\Pbf=\Wbf\rtimes\Gbf$, then $\Gbf^\der$ acts on $\Ubf$ trivially, because $\Ubf$ is of Hodge type $(-1,-1)$. In this case we still have a $\Qbb$-group $\Pbf^\der=\Wbf\rtimes\Gbf^\der$ of type $\Hscr$ by the same arguments as in \ref{group-law} and \ref{harish-chandra};  given $\Gamma\subset\Pbf(\Qbb)^+$ a congruence subgroup, we may still define the lattice space $\Omega=\Gamma^\dag\bsh\Pbf^\der(\Rbb)^+$. But the $\rho$-rigidity is never satisfied due to the existence of $\Ubf$, and the association of a lattice subspace to a special subvariety is no longer injective because conjugation by elements in $\Ubf$ is not reflected at the level of lattice subspaces. This is the main reason that our notion of $\rho$-rigidity, hence the ergodic-theoretic arguments for the main theorem, does not admit a natural generalization to the case of general mixed Shimura varieties. 

\end{remark}

%we do not repeat the statement of the main theorem and the main corollary.
 
Now we show that the main theorem \ref{main-theorem} implies the main corollary \ref{main-corollary}.

\begin{proof}[Proof of the Corollary]

Assume on the contrary that $\Sigma'_\maxx(Z)$ is infinite for some closed subset $Z$. Then $\Sigma'_\maxx(Z)$ contains an infinite sequence $(S_n) $. Let $\nu_n$ be the canonical measure associated to $S_n$, which form an infinite sequence in $\Pscr'(S)$. The compactness of $\Pscr'(S)$ implies the existence of a convergent subsequence. We may assume for simplicity that $(\nu_n) $ converges to some $\nu\in\Pscr'(S)$, and we write $S_\nu:$ for $\Supp\nu$. The "support convergence" property  implies

(i) $S_n\subset S_\nu$ for $n$ large, and therefore $S_\nu\nsubseteq Z$ by the maximality of the $S_n$'s;

(ii) $S_\nu$ is the archimedean closure of $\bigcup_{n> 0}\Supp\nu_n$, in particular it is contained in $Z$; which contradicts (i).\end{proof}

In the next two sections we will prove the main theorem for $S=\Omega$ and $S=M$ respectively. For simplicity we will assume that the datum $(\Pbf,Y)$ is
$\Cbf$-special itself, namely $\Cbf$ equals the connected center of $\Gbf$, thanks to the following reduction lemma:

\begin{lemma}
  \label{finite-maximality} Let $(\Pbf,Y)$ be a Kuga datum, and $\pi:(\Pbf,Y)\ra(\Gbf,X)$ the canonical projection onto the pure base. Fix $\Cbf$ a $\Qbb$-torus of $\Gbf$. Then the set of maximal $\Cbf$-special Kuga subdata of $(\Pbf,Y)$ is finite.
\end{lemma}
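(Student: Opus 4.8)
The plan is to reduce the statement, via Lemma~\ref{subdatum-recovery}, to the analogous finiteness on the pure base $(\Gbf,X)$, exploiting the fact that the full fibre $\Vbf$ absorbs all the vectorial data. Indeed, any $\Cbf$-special Kuga subdatum is by \ref{subdatum-recovery} of the form $(\Pbf_1,Y_1)=\Vbf_1\rtimes_\rho(v\Gbf_1v^\inv,v\rtimes X_1)$, where $(\Gbf_1,X_1)=\pi(\Pbf_1,Y_1)$ is a $\Cbf$-special pure subdatum, $\Vbf_1\subset\Vbf$ is a $\Gbf_1$-subrepresentation, and $v\in\Vbf(\Qbb)$. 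Since $\Vbf_1(\Rbb)+v\subset\Vbf(\Rbb)$ and $v\Gbf_1v^\inv\subset\Vbf\rtimes_\rho\Gbf_1$, one has $(\Pbf_1,Y_1)\subset\pi^\inv(\Gbf_1,X_1):=\Vbf\rtimes_\rho(\Gbf_1,X_1)$; and the latter is again $\Cbf$-special, because its projection is $(\Gbf_1,X_1)$ and, as noted in Definition~\ref{rho-rigid-c-special}, $\Cbf$ acts on the whole of $\Vbf$ through a $\Qbb$-torus isogenous to a product of a compact one with a split one, so that taking $\Vbf_1=\Vbf$ is admissible. Consequently $(\Gbf_1,X_1)\mapsto\pi^\inv(\Gbf_1,X_1)$ restricts to an inclusion-preserving bijection between maximal $\Cbf$-special pure subdata of $(\Gbf,X)$ and maximal $\Cbf$-special Kuga subdata of $(\Pbf,Y)$: a maximal Kuga subdatum must coincide with the full preimage of its projection (otherwise it is strictly contained in that preimage), and that projection must then be a maximal pure subdatum. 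This reduces the lemma to the finiteness of maximal $\Cbf$-special \emph{pure} subdata, and incidentally shows that the translation vector $v$ and the choice of $\Vbf_1$ play no role for the maximal objects.

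Next I would pin down the maximal $\Cbf$-special reductive group. Write $\Hbf$ for the centralizer of $\Cbf$ in $\Gbf$, a reductive $\Qbb$-group. If $\Gbf'$ is reductive with connected center $Z^\circ(\Gbf')=\Cbf$, then $\Cbf$ is central in $\Gbf'$, so $\Gbf'\subset\Hbf$; moreover $\Gbf'^\der$, being perfect, maps trivially into the torus $\Hbf/\Hbf^\der$, whence $\Gbf'^\der\subset\Hbf^\der$ and $\Gbf'=\Cbf\cdot\Gbf'^\der$. Conversely, for any semisimple $\Qbb$-subgroup $\Lbf\subset\Hbf^\der$ the almost-direct product $\Cbf\cdot\Lbf$ has connected center exactly $\Cbf$ (its center is $\Cbf\cdot Z(\Lbf)$ with $Z(\Lbf)$ finite). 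Thus $\Gbf_{\max}:=\Cbf\cdot\Hbf^\der$ is the \emph{unique} maximal $\Cbf$-special reductive subgroup of $\Gbf$, and every $\Cbf$-special reductive subgroup is contained in it.

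The finiteness then comes from the Hodge locus of $\Cbf$. A $\Cbf$-special pure subdatum $(\Gbf',X')$ requires a point $x$ with $x(\Sbb)\subset\Gbf'_\Rbb\subset\Hbf_\Rbb$, i.e. with $x(\Sbb)$ centralizing $\Cbf$; equivalently, $x$ is fixed by $\Cbf(\Rbb)$ for the conjugation action of $\Gbf(\Rbb)$ on $X$. Hence all base points of $\Cbf$-special subdata lie in the fixed locus $X^{\Cbf(\Rbb)}$. This fixed locus of a torus acting on the finitely many Hermitian symmetric components of $X$ is a closed totally geodesic submanifold with \emph{finitely many} connected components; on a given component $X_0^+$ the Cartan involution $\Int(x(\ibf))$ singles out one and the same collection of non-compact $\Qbb$-factors of $\Hbf^\der$, say $\Hbf^{\der,\mathrm{nc}}$, so that $(\Cbf\cdot\Hbf^{\der,\mathrm{nc}},X_0^+)$ is a $\Cbf$-special subdatum. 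Since by axiom (MS-3) no compact $\Qbb$-factor may occur in the derived group of a subdatum, every $\Cbf$-special subdatum with base point in $X_0^+$ is contained in $(\Cbf\cdot\Hbf^{\der,\mathrm{nc}},X_0^+)$. Therefore the maximal $\Cbf$-special pure subdata are among the finitely many $(\Cbf\cdot\Hbf^{\der,\mathrm{nc}},X_0^+)$, one per component of $X^{\Cbf(\Rbb)}$, and the asserted finiteness follows.

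The step I expect to be the main obstacle is the last one: establishing that the fixed locus $X^{\Cbf(\Rbb)}$ has only finitely many connected components, together with the compatible bookkeeping of compact $\Qbb$-factors along each component. This is precisely where the structure theory of (sub-)Shimura data must be invoked, and it is the pure-case analogue of the finiteness inputs underlying the work of Clozel--Ullmo and Ullmo--Yafaev; by comparison, the preimage reduction of the first paragraph and the identification of $\Gbf_{\max}$ are formal.
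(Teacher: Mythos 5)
Your proposal is correct and takes essentially the same route as the paper: first the reduction to the pure base by observing that maximal $\Cbf$-special Kuga subdata are exactly the full preimages $\Vbf\rtimes_\rho(\Gbf_1,X_1)$ of maximal $\Cbf$-special pure subdata, then the identification of the unique maximal $\Cbf$-special reductive subgroup as $\Cbf\cdot\Hbf^{\der,\mathrm{nc}}$, where $\Hbf^{\der,\mathrm{nc}}$ is the product of the non-compact $\Qbb$-factors of the derived group of the centralizer of $\Cbf$. The one step you flag as the main obstacle --- finiteness of the components of the fixed locus, equivalently finiteness of the possible $X'$-parts for a fixed Mumford--Tate group --- is precisely the point the paper itself does not prove but quotes from Ullmo--Yafaev (their 3.6 and 3.7), so your argument lands exactly on the cited input.
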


\begin{proof}
  If $(\Gbf',X')$ is a maximal $\Cbf$-special pure subdatum of $(\Gbf,X)$, then $(\pi^\inv(\Gbf')=\Vbf\rtimes\Gbf',\pi_*^\inv(X')=\Vbf(\Rbb)\rtimes X')$ is a maximal $\Cbf$-special Kuga subdatum of $(\Pbf,Y)$, and it contains any $\Cbf$-special subdatum whose image under $\pi$ is contained in $(\Gbf',X')$. Thus we are reduced to the case where $(\Pbf,Y)=(\Gbf,X)$ is pure, which is already done in \cite{ullmo-yafaev}. More precisely, E.Ullmo and A.Yafaev showed that:

  (1) (cf.\cite{ullmo-yafaev} 3.6) The connected centralizer $\Zbf_\Gbf\Cbf$ decomposes into an almost direct product $\Zbf^\circ\Hbf\Hbf^c$ where $\Zbf^\circ$ is the connected center of $\Zbf_\Gbf^\Cbf$, $\Hbf$ the product of non-compact $\Qbb$-factors, and $\Hbf^c$ the product of compact ones. Then by putting $\Gbf''=\Cbf\Hbf$ and $X''=\Gbf''(\Rbb)X'$, we get a pure subdatum $(\Gbf'',X'')$, which is a maximal $\Cbf$-special one by construction of $\Gbf$.

  (2) (cf.\cite{ullmo-yafaev} 3.7) For a given reductive $\Qbb$-subgroup $\Gbf''$ of $\Gbf$, there are at most finitely many pure subdata with Mumford-Tate groups equal to $\Gbf''$.\end{proof}

\section{Proof of the main theorem for lattice spaces}
The formulation of the main theorem is inspired from the following theorem of S.Mozes and N.Shah, which we state in the case of the lattice space $\Omega=\Gamma^\dag\bsh\Pbf^\der(\Rbb)^+$, with notations as in \ref{main-theorem}:

\begin{theorem}
  \label{mozes-shah} Let $\Pscr_h(\Omega)$ be the set of canonical measures on $\Omega$ associated to lattice subspaces of the form $\wp_\Gamma(\Hbf(\Rbb)^+)$ in the sense of \ref{kuga-lattice-space}, with $\Hbf$ running through $\Qbb$-subgroup of $\Pbf^\der$ of type $\Hscr$. Then $\Pscr_h(\Omega)$ is a compact subset of $\Pscr(\Omega)$ the set of Borelian probability measures on $\Omega$, endowed with the weak topology. Moreover it admits the "support convergence" property, namely if $(\nu_n) $ is a sequence in $\Pscr_h(\Omega)$ that converges to some $\nu\in\Pscr_h(\Omega)$, then $\Supp\nu_n\subset\Supp\nu$ for $n$ large enough, and the union $\bigcup_{n> 0}\Supp\nu_n$ is dense in $\Supp\nu$ for the archimedean topology.
\end{theorem}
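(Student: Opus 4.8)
The plan is to realize each element of $\Pscr_h(\Omega)$ as a homogeneous probability measure of the type classified by Ratner and studied by Mozes--Shah, and then to transcribe the conclusions of \cite{mozes-shah} into the present arithmetic setting. Thus the ``proof'' is really a verification that the hypotheses of their theorem are met, followed by a translation of its output into the language of $\Qbb$-subgroups of type $\Hscr$.

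First I would verify that every $\nu\in\Pscr_h(\Omega)$ is invariant and ergodic under an Ad-unipotent one-parameter flow. If $\nu$ is the canonical measure attached to $\Omega_\Hbf=\wp_\Gamma(\Hbf(\Rbb)^+)$ with $\Hbf$ of type $\Hscr$, say $\Hbf=\Nbf\rtimes\Hbf_0$ with $\Nbf$ unipotent and $\Hbf_0$ semisimple without compact $\Qbb$-factors, then $\Hbf(\Rbb)^+$ is generated by its unipotent one-parameter subgroups, since $\Nbf$ is unipotent and $\Hbf_0$ has no compact factors. By \ref{harish-chandra}(1) the group $\Gamma\cap\Hbf(\Rbb)^+$ is a lattice in $\Hbf(\Rbb)^+$, and since $\Hbf$ is a $\Qbb$-subgroup with no nontrivial $\Qbb$-character the orbit $\Omega_\Hbf$ is closed and carries $\nu$ as its $\Hbf(\Rbb)^+$-invariant probability measure; by the Mautner--Moore ergodicity phenomenon $\nu$ is ergodic under a suitable Ad-unipotent one-parameter subgroup of $\Hbf(\Rbb)^+$. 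Hence $\Pscr_h(\Omega)$ embeds into the space of ergodic homogeneous probability measures to which the Mozes--Shah theorem applies.

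Next I would establish compactness. Given a sequence $\nu_n=\mu_{\Hbf_n}$ in $\Pscr_h(\Omega)$, the first point is to rule out escape of mass: because every $\Hbf_n$ is a $\Qbb$-subgroup, the supporting orbits are swept out by unipotent trajectories that are uniformly nondivergent by the Dani--Margulis linearization estimates, so after passing to a subsequence $\nu_n\to\nu$ with $\nu$ a probability measure. The theorem of \cite{mozes-shah} then yields that $\nu$ is homogeneous, invariant under the closed subgroup $H$ generated by the unipotents acting ergodically on $\Supp\nu$, and supported on a single closed orbit. Since $H$ is generated by unipotents and stabilizes a homogeneous probability measure on the $\Qbb$-arithmetic quotient $\Omega$, the algebraicity of stabilizers (Ratner, Shah) forces $H=\Hbf'(\Rbb)^+$ for a $\Qbb$-subgroup $\Hbf'\subset\Pbf^\der$; being unipotent-generated, $\Hbf'$ has semisimple reductive quotient without compact factors, hence is of type $\Hscr$, so $\nu=\mu_{\Hbf'}\in\Pscr_h(\Omega)$.

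Finally, the support-convergence property can be read off the same linearization argument: when $\nu_n\to\nu=\mu_{\Hbf'}$, the method of \cite{mozes-shah} produces elements $g_n\to e$ in $\Pbf^\der(\Rbb)^+$ conjugating $\Hbf_n$ into $\Hbf'$, which gives $\Supp\nu_n\subset\Supp\nu$ for $n$ large; the density of $\bigcup_{n>0}\Supp\nu_n$ in $\Supp\nu$ then follows because weak convergence forces every open set meeting $\Supp\nu$ to meet $\Supp\nu_n$ for $n$ large, hence $\Supp\nu\subset\overline{\bigcup_n\Supp\nu_n}$, while the tail containment supplies the reverse inclusion. The main obstacle I expect is precisely this arithmetic upgrade of the dynamical statement: controlling escape of mass through nondivergence and identifying the limiting stabilizer $H$ as the real points of a $\Qbb$-subgroup of type $\Hscr$, since Mozes--Shah a priori only provide a closed subgroup generated by unipotents.
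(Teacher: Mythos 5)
For calibration: the paper does not actually prove this statement at all --- Theorem \ref{mozes-shah} is presented as the Mozes--Shah theorem itself, ``stated in the case of the lattice space $\Omega$'', exactly as in \cite{clozel-ullmo} for the pure case. So your proposal is the verification that the paper delegates to the citation, and your skeleton is the standard one: \cite{harish-chandra-borel} for the lattice property, ergodicity under a one-parameter Ad-unipotent flow, Dani--Margulis nondivergence against escape of mass, Mozes--Shah for homogeneity of the limit, and then an arithmetic identification of the limit. Note, however, that two of your steps silently rest on a feature of this particular family which deserves to be made explicit: every lattice subspace $\wp_\Gamma(\Hbf(\Rbb)^+)$ contains the identity coset $\Gamma^\dag e$. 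This common base point (not merely the fact that the $\Hbf_n$ are $\Qbb$-subgroups) is what makes the nondivergence estimates uniform over the sequence, and it is what lets one delete the perturbing elements: \cite{mozes-shah} only give containment of supports up to translation by some $g_n\to e$, and one recovers the clean statement $\Supp\nu_n\subset\Supp\nu$ by combining $\Gamma^\dag e\in\Supp\nu_n$ with the discreteness of $\Gamma^\dag$ and the closedness of $\Gamma^\dag\Hbf'(\Rbb)^+$ in $\Pbf^\der(\Rbb)^+$, which forces $g_n\in\Hbf'(\Rbb)^+$ for $n$ large.

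The genuine gap is in your identification of the limit, and it stems from misreading type $\Hscr$: the definition \ref{type-h} excludes compact \emph{$\Qbb$-factors} of the semisimple part, not compact factors of its group of \emph{real} points. A $\Qbb$-simple group can perfectly well have compact real factors (norm-one groups of quaternion algebras over totally real fields ramified at some infinite places; by Mumford-type constructions such groups do occur as $\Qbb$-subgroups of $\Sp(\Vbf)$, hence of $\Pbf^\der$). Consequently: (a) $\Hbf(\Rbb)^+$ need \emph{not} be generated by its unipotent one-parameter subgroups --- this is harmless for your first step, since ergodicity can instead be obtained from Moore's ergodicity theorem for irreducible lattices together with the Mautner phenomenon in the fibre direction; but (b) your third step breaks down. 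If the limit $\nu$ is, say, the canonical measure attached to such an $\Hbf_0$, then the closed subgroup $H$ generated by the unipotents preserving $\nu$ is only the product of the noncompact real factors: it is not the group of real points of any $\Qbb$-subgroup, its orbits are dense rather than closed (the arithmetic lattice projects densely into the compact factor), and $\Gamma^\dag\cap H$ is not a lattice in $H$ --- so the Ratner--Shah algebraicity statements you invoke do not have their hypotheses satisfied by $H$, and your chain ``unipotent-generated $\Rightarrow$ no compact factors $\Rightarrow$ type $\Hscr$'' proves something that is false for such limits. The standard repair, which is also how the paper's Section 3 argues once this theorem is granted, bypasses the abstract stabilizer: after the clean containment $\Supp\nu_n\subset\Supp\nu$ for $n$ large, let $\Hbf'$ be the $\Qbb$-subgroup generated by $\bigcup_n\Hbf_n$ over the tail. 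It is visibly defined over $\Qbb$; it is of type $\Hscr$ because each $\Hbf_n$ is (any compact $\Qbb$-factor of the reductive quotient of $\Hbf'$ would receive trivial image from every generator, since unipotent $\Qbb$-groups and semisimple $\Qbb$-groups without compact $\Qbb$-factors admit only trivial maps to a compact $\Qbb$-group); the invariance and density conclusions of \cite{mozes-shah} then give that $\nu$ is $\Hbf'(\Rbb)^+$-invariant with $\Supp\nu=\wp_\Gamma(\Hbf'(\Rbb)^+)$, and uniqueness of invariant probability measures on finite-volume homogeneous spaces identifies $\nu$ with the canonical measure of that lattice subspace.
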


From \ref{harish-chandra}(2) we see that $\Pscr'(\Omega)\subset\Pscr_h(\Omega)$. To prove the main theorem for $\Omega$ it remains to show that $\Pscr'(\Omega)$ is closed in $\Pscr_h(\Omega)$. We thus work within the following setting in the rest of this section:

\begin{assumption}
  We are given a sequence $(\nu_n) $ in $\Pscr'(\Omega)$ that converges to some $\nu$ in $\Pscr_h(\Omega)$. Let $\Omega_n$ be the support of $\nu_n$, which is defined by the $\rho$-rigid $\Cbf$-special subdatum $(\Pbf_n,Y_n)$, and $\Omega_\nu$ the support of $\nu$, defined by $\Pbf'$ a $\Qbb$-subgroup of type $\Hscr$ in $\Pbf^\der$. We have the decomposition $$(\Pbf_n,Y_n)=\Vbf_n\rtimes_\rho(v_n\Gbf_nv_n^\inv,v_n\rtimes X_n)$$ for $v_n\in\Vbf(\Qbb)$, such that the $\Qbb$-subgroups $\Gbf_n=\pi(\Pbf_n)\subset\Gbf$ have $\Cbb$ as the common connected center for all $n\in \Nbb$.

  For simplicity we assume that $\Supp\nu_n\subset\Supp\nu$ for all $n$. By computing the tangent spaces of $\Omega_n$ and of $\Omega'$ at the distinguished point $\Gamma^\dag e$, we get inclusions of Lie algebras $\Lie\Pbf^\der_{n\Rbb}\subset\Lie\Pbf'_\Rbb$, and thus $\Pbf_n^\der\subset\Pbf'$ for all $n$.
\end{assumption}

%The canonical projection $\pi$

\begin{lemma}\label{push-forward}$\pi_*\nu_n$ is the canonical measure associated to $\pi(\Pbf_n^\der)$, and $\nu$ is the canonical measure associated to $\pi(\Pbf'^\der)$.
\end{lemma}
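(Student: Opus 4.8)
The plan is to treat both push-forwards by a single principle: $\pi$ is a homomorphism, hence equivariant for the (right) translation actions, so $\pi_*$ carries an invariant probability measure to an invariant probability measure. Concretely, the canonical measure of a lattice subspace $\wp_\Gamma(\Hbf(\Rbb)^+)$ is the unique $\Hbf(\Rbb)^+$-invariant probability measure on $(\Gamma\cap\Hbf(\Rbb)^+)\bsh\Hbf(\Rbb)^+$, which is a finite-volume homogeneous space by \ref{harish-chandra}. Since $\pi$ is a homomorphism, $\pi_*$ of this measure is a $\pi(\Hbf)(\Rbb)^+$-invariant probability measure supported on $\wp_{\Gamma_\Gbf}(\pi(\Hbf)(\Rbb)^+)$, and by uniqueness of the invariant probability measure it is exactly the canonical measure associated to $\pi(\Hbf)$ (this is the same computation as in \ref{measure-compatibility}(2), where only the full groups were treated, and uses the unimodularity supplied by \ref{harish-chandra}(2)). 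Thus it remains to identify the two image groups $\pi(\Pbf_n^\der)$ and $\pi(\Pbf')$.

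For $\nu_n$ the identification is immediate from the explicit shape of the subdatum. By \ref{subdatum-recovery} and \ref{group-law}(2) applied to the Kuga subdatum $(\Pbf_n,Y_n)$ we have $\Pbf_n^\der=\Vbf_n\rtimes_\rho(v_n\Gbf_n^\der v_n^\inv)$ with $v_n\in\Vbf(\Qbb)$, and $\Vbf=\Ker\pi$. The group-law formula of \ref{group-law} gives $\pi(v_n g v_n^\inv)=g$ for $g\in\Gbf_n^\der$, so that $\pi(\Pbf_n^\der)=\Gbf_n^\der$; the unipotent translation by $v_n$ therefore disappears under $\pi$, and $\pi_*\nu_n$ is the canonical measure associated to the semisimple group $\Gbf_n^\der=\pi(\Pbf_n^\der)$, which is the first assertion.

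For the limit measure the first paragraph gives that $\pi_*\nu$ is the canonical measure associated to $\pi(\Pbf')$. Writing $\Pbf'=\Wbf'\rtimes\Hbf'$ for a Levi decomposition of the type-$\Hscr$ group $\Pbf'$, the Levi $\Hbf'$ is semisimple and meets $\Ker\pi=\Vbf$ trivially, so $\pi$ is injective on $\Hbf'$; moreover $\pi(\Pbf'^\der)=\pi(\Pbf')^\der$ since $\pi$ is surjective onto its image. Hence the asserted equality, that $\pi_*\nu$ is the canonical measure of $\pi(\Pbf'^\der)$, is precisely the statement that $\pi(\Pbf')$ coincides with its own derived group, i.e. that $\pi(\Pbf')$ is semisimple, equivalently that the unipotent radical $\Wbf'$ of the limit group lies in $\Ker\pi=\Vbf$. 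This is the point I expect to be the main obstacle: it does not follow from the type-$\Hscr$ structure alone, and I would establish it from the description of $\Pbf'$ as the Mozes--Shah limit of the groups $\Pbf_n^\der$, whose unipotent radicals $\Vbf_n$ all sit inside the normal unipotent subgroup $\Vbf$, so that the support-convergence of \ref{mozes-shah} keeps $\Wbf'$ inside $\Vbf$ as well.
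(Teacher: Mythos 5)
Your first two paragraphs are correct and amount to the paper's own proof: the paper disposes of this lemma as ``a combination of the two constructions \ref{measure-compatibility}'', which is exactly the push-forward compatibility you establish; your invariance-plus-uniqueness argument is a sound variant of the paper's fundamental-domain computation, and is in fact the more careful route for the second assertion, since \ref{measure-compatibility}(2) is literally stated only for the full projection $\Pbf^\der\ra\Gbf^\der$, while $\Pbf'$ is merely a $\Qbb$-subgroup of type $\Hscr$ and need not come from any Kuga subdatum. The identification $\pi(\Pbf_n^\der)=\Gbf_n^\der$ is likewise the intended content of the first assertion.

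The gap is in your third paragraph, on two counts. First, a logical slip: ``$\pi(\Pbf')$ coincides with its own derived group'' says that $\pi(\Pbf')$ is \emph{perfect}, not that it is semisimple; the two are inequivalent (by the commutator computation of \ref{group-law}, the group $\Pcal(\Vbf)^\der=\Vbf\rtimes_\rho\Sp(\Vbf)$ of \ref{siegel-example} is perfect, yet it has unipotent radical $\Vbf$), and perfectness of $\Pbf'$ already reconciles the literal wording ``$\pi(\Pbf'^\der)$'' with the compatibility statement, so the lemma does not in fact assert semisimplicity of $\pi(\Pbf')$. Second, and more seriously, the claim that \ref{mozes-shah} ``keeps $\Wbf'$ inside $\Vbf$'' is not an argument and cannot be completed as stated, because it uses only Mozes--Shah together with the fact that the unipotent radicals of the generating groups lie in $\Vbf$, and any argument with only those inputs proves a false statement. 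Indeed, take $\Hbf\subset\Gbf^\der$ semisimple without compact $\Qbb$-factors normalizing a nontrivial unipotent $\Qbb$-subgroup $\Ubf\subset\Gbf^\der$, view both inside $\Pbf^\der$ through a pure section, and set $\Pbf_n^\der:=u_n\Hbf u_n^\inv$ with $u_n\in\Ubf(\Qbb)$ of growing denominators: these groups are semisimple, so their unipotent radicals are trivially contained in $\Vbf$, and their canonical measures lie in $\Pscr_h(\Omega)$; yet for suitable $(u_n)$ the limit measure is attached to a subgroup of $\Ubf\rtimes\Hbf$ with nontrivial unipotent radical inside $\Ubf$, hence \emph{not} contained in $\Vbf$ --- this is the same translated-orbit equidistribution phenomenon (torsion sections filling up the family) that this paper is about, only occurring in a direction transverse to $\Vbf$. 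What excludes such drift in the situation of the lemma is precisely the hypothesis your sketch never invokes: all the groups $\Gbf_n=\pi(\Pbf_n)$ have connected center equal to one and the same torus $\Cbf$, whereas the groups $u_n\Hbf u_n^\inv$ above are special only for tori varying with $n$. That the common-$\Cbf$ rigidity forces the limit in the pure quotient to be semisimple (indeed $\Cbf$-special) is the nontrivial pure-case theorem of \cite{clozel-ullmo} and \cite{ullmo-yafaev}, and the paper accordingly does not prove it inside this lemma: it is the subject of the lemma that follows, obtained by applying the cited results of \cite{ullmo-yafaev} to the convergent sequence $\pi_*\nu_n\ra\pi_*\nu$ in $\Omega_\Gbf$ which the present lemma provides. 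So the present lemma should be read, and proved, as the pure compatibility statement, with the semisimplicity of $\pi(\Pbf')$ deferred to that citation rather than extracted from \ref{mozes-shah}.
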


\begin{proof}
This is just a combination of the two constructions \ref{measure-compatibility}.
\end{proof}

The lemma above and the results in \cite{ullmo-yafaev} lead to the following:

\begin{lemma}
  The group $\pi(\Pbf')=\Hbf'$ is a semi-simple $\Qbb$-subgroup in $\Gbf^\der$ of type $\Hscr$, and is generated by $\bigcup_n\Gbf^\der_n$. It is centralized by $\Cbf$, and is $\rho$-rigid.  By putting $\Gbf'=\Cbf\Hbf'$ and $X'=\Gbf'(\Rbb)X_n$, we get a pure subdatum $(\Gbf',X')$, which is $\rho$-rigid and $\Cbf$-special. Finally, there are only finitely many $(\Gbf',X')$ obtained in this way when $n$ varies.
\end{lemma}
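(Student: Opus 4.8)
The plan is to push everything forward under $\pi$ to the pure base $(\Gbf,X)$, apply the pure-case analysis of E.~Ullmo and A.~Yafaev to the resulting sequence of $\Cbf$-special measures on the pure lattice space, and then transport the conclusion back; the only genuinely new ingredient, the persistence of $\rho$-rigidity, will be a formal consequence of the fact that rigidity passes to larger groups. First I would record the elementary inclusions. Since $\Pbf_n^\der\subset\Pbf'$ for all $n$ by the Assumption, applying $\pi$ and using $\pi(\Pbf_n^\der)=\Gbf_n^\der$ (Lemma \ref{group-law}(2) applied to the Kuga datum $(\Pbf_n,Y_n)$, with $\pi(v_n)=1$) gives $\Gbf_n^\der\subseteq\Hbf':=\pi(\Pbf')$ for every $n$. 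Hence $\Hbf'$ contains the $\Qbb$-subgroup generated by $\bigcup_n\Gbf_n^\der$.

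Next I would push the convergence forward. The map $\pi_*\colon\Pscr(\Omega)\to\Pscr(\Omega_\Gbf)$ is weakly continuous, so $\pi_*\nu_n\to\pi_*\nu$ in $\Pscr(\Omega_\Gbf)$. By Lemma \ref{push-forward}, $\pi_*\nu_n$ is the canonical measure on the lattice subspace $\wp_{\Gamma_\Gbf}(\Gbf_n^\der(\Rbb)^+)$, with each $\Gbf_n$ a $\Cbf$-special $\Qbb$-group; thus $(\pi_*\nu_n)$ is a sequence of $\Cbf$-special measures on the \emph{pure} lattice space $\Omega_\Gbf$, and by the same lemma its limit $\pi_*\nu$ is the canonical measure attached to $\pi(\Pbf'^\der)$.

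Now I would quote the pure case. The pure-case analysis of \cite{ullmo-yafaev} (in particular 3.6 and 3.7) shows that a limit of $\Cbf$-special measures on $\Omega_\Gbf$ is again $\Cbf$-special and that support convergence holds. Applied to $(\pi_*\nu_n)$, this forces $\pi_*\nu$ to be $\Cbf$-special, supported on $\wp_{\Gamma_\Gbf}(\Gbf'^\der(\Rbb)^+)$ for a $\Cbf$-special pure subdatum $(\Gbf',X')$, whose semisimple part $\Gbf'^\der$ is generated by $\bigcup_n\Gbf_n^\der$ (the density half of support convergence) and is centralized by $\Cbf$ (as $\Cbf$ is the common connected center of the $\Gbf_n$, it centralizes each $\Gbf_n^\der$, hence the group they generate). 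Comparing this support with $\Supp\pi_*\nu=\wp_{\Gamma_\Gbf}(\pi(\Pbf')(\Rbb)^+)$ and taking tangent spaces at $\Gamma_\Gbf^\dag e$, exactly as in \ref{injectivity}, identifies $\Hbf'=\pi(\Pbf')=\pi(\Pbf'^\der)=\Gbf'^\der$; in particular $\Hbf'$ is semisimple of type $\Hscr$. Putting $\Gbf'=\Cbf\Hbf'$ recovers the reductive $\Cbf$-special group with $X'=\Gbf'(\Rbb)X_n$ precisely as in \ref{finite-maximality}, and $(\Gbf',X')$ is a pure subdatum because $\Gbf_n=\Cbf\Gbf_n^\der\subseteq\Gbf'$ forces $x_n(\Sbb)\subseteq\Gbf'_\Rbb$ for $x_n\in X_n$.

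Finally I would deduce $\rho$-rigidity and finiteness. Each $\Gbf_n$ is $\rho$-rigid, i.e.\ $\rho$ restricted to $\Gbf_n^\der$ has no nonzero trivial subrepresentation; since $\Gbf_n^\der\subseteq\Hbf'=\Hbf'^\der$, the inheritance observation in Definition \ref{rho-rigid-c-special}(ii-0) shows that $\Hbf'$, and hence $\Gbf'$, is $\rho$-rigid. For finiteness, the group $\Gbf'$ is pinned down by the limit $\nu$, and by \cite{ullmo-yafaev} 3.7 there are only finitely many pure subdata with a given Mumford--Tate group, so varying the base point $X_n$ yields only finitely many $(\Gbf',X')$. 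The main obstacle is the third step: one must rule out a degenerate, non-reductive limit support, and this is exactly where the $\Cbf$-special hypothesis together with the Ullmo--Yafaev pure-case theorem is indispensable, since the bare Mozes--Shah theorem \ref{mozes-shah} does permit such degenerations.
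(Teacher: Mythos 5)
Your proposal is correct and is essentially the paper's own argument: the paper states this lemma without a separate proof, obtaining it---exactly as you do---by combining the push-forward Lemma \ref{push-forward} with the pure-case compactness and support-convergence results of \cite{clozel-ullmo} and \cite{ullmo-yafaev}, the rigidity-inheritance observation in Definition \ref{rho-rigid-c-special}(ii-0), and the finiteness statement recalled in Lemma \ref{finite-maximality}. The only quibble is bibliographic: the limit theorem you invoke in the pure case is the equidistribution theorem of Clozel--Ullmo and Ullmo--Yafaev rather than UY 3.6--3.7, which supply only the structure of maximal $\Cbf$-special subdata and the finiteness of subdata with prescribed Mumford--Tate group.
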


We then show that similar situation occurs for $\Pbf'$:

\begin{proposition}
 (1) The $\Qbb$-group $\Pbf'$ is generated by $\bigcup_n\Pbf^\der_n$.

 (2) For each $n$, the $\Qbb$-subgroup $\Pbf_n$  normalizes $\Pbf'$ in $\Pbf$, and the product $\Pbf_\nu=\Pbf'\Pbf_n$ is independent of $n$.

 (3) Let $Y_\nu$ be the orbit $\Pbf_\nu(\Rbb)Y_n$, then we get a $\rho$-rigid $\Cbf$-special subdatum $(\Pbf_\nu,Y_\nu)$. Only finitely many Kuga subdata arise in this way when $n$ runs through $\Nbb$.
\end{proposition}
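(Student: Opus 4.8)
The plan is to establish the three assertions in order, using the facts already in hand: $\Hbf'=\pi(\Pbf')$ is a $\rho$-rigid semisimple $\Qbb$-group of type $\Hscr$ centralized by $\Cbf$, the group $\Gbf'=\Cbf\Hbf'$ underlies a $\rho$-rigid $\Cbf$-special pure subdatum $(\Gbf',X')$, and each $(\Pbf_n,Y_n)$ is recovered as $\Vbf_n\rtimes_\rho(v_n\Gbf_nv_n^\inv,\,v_n\rtimes X_n)$ with $\Gbf_n=\Cbf\Gbf_n^\der$ and $\Gbf_n^\der\subseteq\Hbf'$.

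For (1), let $\Pbf''$ be the $\Qbb$-subgroup generated by $\bigcup_n\Pbf_n^\der$, so that $\Pbf''\subseteq\Pbf'$ by the standing Assumption. Since $\pi(\Pbf'')=\langle\Gbf_n^\der\rangle=\Hbf'$ is semisimple and $\Pbf''\cap\Vbf$ is its unipotent radical, $\Pbf''$ is of type $\Hscr$, so its lattice subspace $\wp_\Gamma(\Pbf''(\Rbb)^+)$ is closed. Each $\Omega_n$ lies in it, whence by the support-convergence property of Theorem \ref{mozes-shah} the closure $\Omega_\nu=\overline{\bigcup_n\Omega_n}$ is contained in $\wp_\Gamma(\Pbf''(\Rbb)^+)$ as well; combined with $\Pbf''\subseteq\Pbf'$ this forces the two lattice subspaces to agree, and comparing tangent spaces at $\Gamma^\dag e$ (both groups being connected) gives $\Pbf''=\Pbf'$.

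For (2), write $\Pbf_n=\langle\Pbf_n^\der,\,v_n\Cbf v_n^\inv\rangle$; as $\Pbf_n^\der\subseteq\Pbf'$ normalizes $\Pbf'$ by inner automorphisms, it suffices to treat $v_n\Cbf v_n^\inv$. I would first pin down a Levi of $\Pbf'$: the reductive subgroup $v_n\Gbf_n^\der v_n^\inv\subseteq\Pbf'$ lies in some Levi $w\Gbf w^\inv$ of $\Pbf$, and the $\rho$-rigidity of $\Gbf_n$ (which says $\Vbf^{\Gbf_n^\der}=0$) forces $v_n-w$ to vanish, so the Levi is exactly $v_n\Gbf v_n^\inv$; hence $\Pbf'=\Wbf'\rtimes\Lbf'$ with $\Lbf'=v_n\Hbf'v_n^\inv$ and $\Wbf'=\Pbf'\cap\Vbf$. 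Decomposing the generator $v_mgv_m^\inv$ ($g\in\Gbf_m^\der$) against this Levi shows its unipotent component $(1-\rho(g))(v_m-v_n)$ lies in $\Wbf'$, and since $\Gbf_m^\der$ has no trivial subquotient on the module it generates (complete reducibility plus $\rho$-rigidity), these components span that module, giving $v_m-v_n\in\Wbf'(\Qbb)$ for all $m,n$. In particular the tori $v_n\Cbf v_n^\inv$ are mutually conjugate by elements of $\Wbf'(\Qbb)\subseteq\Pbf'$, so the property of normalizing $\Pbf'$ is independent of $n$.

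The crux — and the step I expect to be the main obstacle — is that $\Wbf'$ is stable under $\rho(\Cbf)$, equivalently that $v_n\Cbf v_n^\inv$ normalizes $\Pbf'$: granting this, $c=v_ntv_n^\inv$ preserves both $\Lbf'=v_n\Hbf'v_n^\inv$ (as $t$ centralizes $\Hbf'$) and $\Wbf'$ (as $c$ acts on the abelian $\Vbf$ through $\rho(t)$), hence $c\Pbf'c^\inv=\Pbf'$. Each $\Vbf_n$ is individually $\Cbf$-stable (because $\Cbf\subseteq\Gbf_n$ and $\Vbf_n$ is $\Gbf_n$-stable) and $\Cbf$ commutes with $\Hbf'$, so the $\Hbf'$-module they generate is $\Cbf$-stable; the difficulty is that $\Wbf'$ also contains the Levi-discrepancy directions generated by the $v_m-v_n$, and controlling these seems to require the Mozes–Shah density from (1) — pure manipulation of a single pair $\Pbf_m^\der,\Pbf_n^\der$ does not suffice. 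The conceptually clean reformulation, which I would pursue, is that $\Wbf'$ is a sub-Hodge-structure of $(\Vbf,\rho\circ x)$ for $x\in X'$: being $\Hbf'$-invariant it is invariant under the algebraic monodromy (contained in $\Hbf'=\Gbf'^{\der}$), hence underlies a sub-variation, hence is a $\Gbf'$-submodule, and in particular $\Cbf$-stable. Once normalization is secured, $\Pbf_\nu=\Pbf'\Pbf_n=\Wbf'\rtimes(v_n\Gbf'v_n^\inv)$ has unipotent radical $\Wbf'$ and pure part $\Gbf'$, both independent of $n$, and since $v_m-v_n\in\Wbf'(\Qbb)$ the uniqueness clause of \ref{subdatum-recovery} makes $\Pbf_\nu$ itself independent of $n$.

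Finally, for (3): $\Wbf'$ is a $\Gbf'$-submodule cutting out a sub-Hodge-structure, so by the reconstruction of \ref{reconstruction} and \ref{subdatum-recovery} the pair $(\Pbf_\nu,\,Y_\nu=\Pbf_\nu(\Rbb)Y_n)$ is a genuine Kuga subdatum; it is $\rho$-rigid and $\Cbf$-special precisely because $\pi(\Pbf_\nu)=\Gbf'$ is so by the previous lemma. Finiteness then follows as in the pure case: there are only finitely many $(\Gbf',X')$ (previous lemma), $\Pbf_\nu$ is determined by $\Gbf'$ together with $\Wbf'$, and $Y_\nu$ ranges over only finitely many $\Pbf_\nu(\Rbb)$-orbits as the component $Y_n$ varies.
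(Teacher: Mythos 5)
Your proposal tracks the paper's own proof closely in its correct parts: your argument for (1) (the group $\Pbf''$ generated by the $\Pbf_n^\der$ is of type $\Hscr$, its lattice subspace is closed and contains every $\Omega_n$, so support convergence forces $\Pbf''=\Pbf'$) is the paper's argument; so is your derivation of $v_m-v_n\in\Wbf'(\Qbb)$ from the unipotent components $(1-\rho(g))(v_m-v_n)$ together with $\rho$-rigidity (the paper phrases this as ``the same arguments as used in \ref{injectivity}''), so is the conclusion that $\Pbf_\nu=\Wbf'\rtimes(v_n\Gbf'v_n^\inv)$ is independent of $n$, and so, essentially, is (3). You have also correctly isolated the one step on which everything hinges: that $\Wbf'$ is $\rho(\Cbf)$-stable. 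It is worth noting that the paper dispatches this step in a single sentence (``$\Cbf$ commutes with $\Hbf'$, and $\Hbf'$ stabilizes $\Vbf'$, hence $\Vbf'$ is stabilized by $u(\Cbf\Hbf')u^\inv$''), which, read literally, is the very non-implication you rightly refuse to accept: an operator commuting with $\Hbf'$ permutes the $\Hbf'$-submodules but need not preserve a given one.

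The gap is in your proposed resolution of that step. The implication ``invariant under the algebraic monodromy $\Rightarrow$ underlies a sub-variation'' is not a quotable theorem, and it is false for general polarizable variations, even ones with no monodromy-fixed part: over a modular curve take $\mathbb{E}\otimes T$, where $\mathbb{E}$ is the weight-one variation of the universal elliptic curve and $T$ is the weight-one Hodge structure of a fixed CM elliptic curve; for every $\Qbb$-line $L\subset T$ the sub-local system $\mathbb{E}\otimes L$ is monodromy-invariant but is not a sub-variation, because $L$ is not a sub-Hodge structure of $T$. (Deligne's and Andr\'e's fixed-part theorems govern only monodromy-invariant \emph{vectors}, not invariant subspaces.) Worse, in the situation at hand your implication is exactly equivalent to the statement being proved: for the variation $(\Vbf,\rho\circ x)$ on $X'$, with monodromy $\Hbf'$ and generic Mumford--Tate group $\Gbf'=\Cbf\Hbf'$, ``every sub-local system is a sub-variation'' is a restatement of ``every $\Hbf'$-submodule of $\Vbf$ is $\Cbf$-stable''; so the argument is circular at its crux. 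A correct argument must exploit precisely what is special here, namely the Hodge type $\{(-1,0),(0,-1)\}$ together with $\rho$-rigidity and Hodge-genericity. For instance: write $\Vbf_{\Qac}=\bigoplus_i U_i\otimes M_i$ for the $\Hbf'$-isotypic decomposition, with each $U_i$ nontrivial (this is $\rho$-rigidity) and $\Cbf$ acting through the multiplicity spaces $M_i$; the Hodge cocharacter of a Hodge-generic $x\in X'$ takes only two consecutive weights on $\Vbf$ (minuscule), while (up to isogeny) its $\Hbf'$-component has weights summing to $0$ on each $U_i$; this forces the $\Cbf$-component to act on each $M_i$ with a single weight, and since $\Gbf'$ is the $\Qbb$-Zariski closure of the group generated by $h_x(\Sbb)$, one deduces that $\Cbf$ acts on each $M_i$ by scalars, hence preserves every $\Hbf'$-submodule of $\Vbf$, in particular $\Wbf'$. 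Some argument of this kind --- which, to be fair, is also absent from the paper --- is what the ``hence'' in your proposal is silently carrying; Mozes--Shah density, which you suggest as the missing input, is not what is needed.
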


\begin{proof}

(1) Note that $\Omega_n\subset\Omega'$ is a smooth submanifold passing through $\Gamma^\dag e$, with inclusion of tangent spaces $\Lie\Pbf^\der_{n\Rbb}\subset\Lie\Pbf'_\Rbb$, we see that $\Pbf'$ contains all the $\Pbf_n^\der$'s. Let $\Pbf''$ be the $\Qbb$-subgroup generated by $\bigcup_n\Pbf^\der_n$, then  the image $\pi(\Pbf'')$ is generated by $\bigcup_n\pi(\Pbf_n^\der)$, and is equal to $\pi(\Pbf')$ by the lemma above. In particular it is semi-simple without compact $\Qbb$-factors, and is centralized by $\Cbf$. The kernel $\Vbf''$ of $\Pbf''\ra\pi(\Pbf'')$ is a vectorial $\Qbb$-subgroup of $\Vbf$, and $\Pbf''$ is a $\Qbb$-subgroup in $\Pbf'$ of type $\Hscr$. The supports $\Omega_n$'s are contained in $\Omega'':=\wp_\Gamma(\Pbf''(\Rbb)^+)$, and by density we must have $\Omega''=\Omega'$, hence $\Pbf''=\Pbf'$.

(2) Consider the decompotion $\Pbf'=\Vbf'\rtimes(u\Hbf'u^\inv)$ for some $u\in\Vbf(\Qbb)$ and $\Hbf'=\pi(\Pbf')$. Then we have $\Pbf'\supset\Pbf_n^\der=\Vbf_n\rtimes(v_n\Hbf_nv_n^\inv)$ with $\Hbf_n=\Gbf_n^\der$. By the same arguments as used in \ref{injectivity}, we see that $u-v_n\in\Vbf'(\Qbb)$ for all $n$. Note that $\Cbf$ commutes with $\Hbf'$, and that $\Hbf'$ stabilizes $\Vbf'$, we see that $\Vbf'$ is stabilized by $u(\Cbf\Hbf')u^\inv$. Thus we have the inclusion chain  $$\Pbf'u\Cbf u^\inv=\Vbf'\rtimes(u\Cbf\Hbf'u^\inv)\supset\Vbf'\rtimes(v_n\Cbf\Hbf'v_n^\inv)\supset
\Vbf_n\rtimes(v_n\Gbf_nv_n^\inv)$$ hence the claim.

(3) Take $y\in Y_n$, it is clear that $(\Pbf_\nu,\Pbf_\nu(\Rbb)y)$ is a Kuga subdatum: it suffices to check the conditions on the Hodge type, which is true because $y(\Sbb)\subset\Pbf_{n\Rbb}\subset\Pbf_{\nu\Rbb}$ thus $\Lie\Pbf_\nu$ is a rational Hodge substructure of $\Lie\Pbf$ through $Ad_\Pbf\circ y$.\end{proof}

Thus $\nu=\lim_n\nu_n$ is associated to some $\rho$-rigid $\Cbf$-special subdatum, and the main theorem is established for the lattice space $\Omega$.

\section{Proof of the main theorem for Kuga varieties}
We first show that $\Pscr'(M)$ is compact, namely any sequence in $\Pscr'(M)$ admits a convergent subsequence with limit in $\Pscr'(M)$.

\begin{assumption}
 We keep the notations $\Omega=\Gamma^\dag\bsh\Pbf^\der(\Rbb)^+$, $M=\Gamma\bsh Y^+$, etc. as in \ref{main-theorem}. Let $(\mu_n)$ be a sequence in $\Pscr'(M)$, defined by $\rho$-rigid $\Cbf$-special connected Kuga subdata $(\Pbf_n,Y_n;Y_n^+)$. Denote by $\nu_n$ the canonical measure associated to $\Omega_n:=\wp_\Gamma(\Pbf_n^\der(\Rbb)^+)$ which is the lattice subspace given by $(\Pbf_n,Y_n)$, and $\mu_n=\kappa_{y_n *}\nu_n$ for some $y_n\in Y^+_n$.

  Using the main theorem for $\Omega$, we assume for simplicity that $(\nu_n)$ converges to some $\nu\in\Pscr'(\Omega)$, and that $\Supp\nu_n\subset\Supp\nu$ for all $n$; furthermore,  there is a connected $\rho$-rigid $\Cbf$-special subdatum $(\Pbf_\nu,Y_\nu;Y_\nu^+)$ which induces $\nu$ and contains infinitely many $(\Pbf_n,Y_n;Y^+_n)$, and we  may assume that $(\Pbf_n,Y_n;Y_n^+)\subset(\Pbf_\nu,Y_\nu;Y^+_\nu)$ for all $n$.
\end{assumption}

We  follow the strategy of \cite{clozel-ullmo} 4.3, 4.4, 4.5:

\begin{lemma}
Let $M=\Gamma\bsh Y^+$ be a Kuga variety defined by $(\Pbf,Y;Y^+,\Gamma)$, with pure base $S=\Gamma_\Gbf\bsh X^+$ given by $(\Gbf,X;X^+,\Gamma_\Gbf)$. Denote by $\Cbf$ the connected center of $\Gbf$. Then there exists a compact subset $K(\Cbf,M)$ of $Y^+$, such that if $M'\subset M$ is a $\Cbf$-special subvariety, then $M'=\wp_\Gamma(Y'^+)$  can be defined by some $\Cbf$-special subdatum $(\Pbf',Y';Y'^+)$ with $Y'^+\cap K(\Cbf,Y^+)\neq\emptyset$.

\end{lemma}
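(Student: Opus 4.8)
The plan is to split the problem along the projection $\pi\colon(\Pbf,Y;Y^+)\ra(\Gbf,X;X^+)$, treating the pure base and the unipotent fibre direction separately, and to take $K(\Cbf,Y^+)$ of product shape. Fixing a pure section we identify $Y^+\isom\Vbf(\Rbb)\times X^+$ as a real-analytic manifold, and the compact set will be $K(\Cbf,Y^+):=B_0\times K_S$ with $K_S\subset X^+$ compact and $B_0\subset\Vbf(\Rbb)$ a fixed compact set. The crucial preliminary observation is that, since $\Cbf$ is the \emph{full} connected centre of $\Gbf$, conjugation by any $\gamma=(\gamma_\Vbf,\gamma_\Gbf)\in\Gamma=\Gamma_\Vbf\rtimes\Gamma_\Gbf$ preserves $\Cbf$-specialness: $\pi(\gamma\Pbf'\gamma^\inv)=\gamma_\Gbf\Gbf'\gamma_\Gbf^\inv$ has connected centre $\gamma_\Gbf\Cbf\gamma_\Gbf^\inv=\Cbf$, the centre of $\Gbf$ being normal. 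Hence we are free to replace a defining subdatum of $M'$ by any $\Gamma$-conjugate, which is exactly the latitude the statement allows.

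For the base I would invoke the pure-Shimura-variety case, which is the $\Vbf=0$ specialisation of the present statement and is established by the reduction-theoretic arguments of \cite{clozel-ullmo} (cf. also \cite{ullmo-yafaev}): there is a compact $K_S\subset X^+$ such that every $\Cbf$-special subvariety of $S$ admits a defining pure subdatum $(\Gbf',X';X'^+)$ with $X'^+\cap K_S\neq\emptyset$. Given a $\Cbf$-special $M'\subset M$ defined by $(\Pbf',Y';Y'^+)$, its image $\pi(M')$ is defined by $(\Gbf'=\pi(\Pbf'),\,X'^+=\pi(Y'^+))$, which is $\Cbf$-special in $S$; conjugating by a suitable $\gamma_\Gbf\in\Gamma_\Gbf$ (which simultaneously moves $\Gbf'$ and $X'^+$ by $\gamma_\Gbf$, since $\pi$ is equivariant) we may therefore assume $X'^+\cap K_S\neq\emptyset$, say $x_*\in X'^+\cap K_S$.

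For the fibre, write $\Pbf'=\Vbf'\rtimes_\rho(v\Gbf'v^\inv)$ as in \ref{subdatum-recovery}. Over $x_*$ the fibre of $Y'^+$ inside $\Vbf(\Rbb)$ is an affine subspace with direction $\Vbf'(\Rbb)$, containing some vector $w_*$ whose class modulo $\Vbf'(\Rbb)$ is governed by $v$. Now $\Gamma_\Vbf=\Gamma\cap\Vbf(\Qbb)$ is a full-rank arithmetic lattice in the vector group $\Vbf(\Rbb)$, so the real torus $\Vbf(\Rbb)/\Gamma_\Vbf$ is compact (this is the fibre torus already appearing in \ref{kuga-variety}(3)); fix once and for all a compact fundamental domain $B_0\subset\Vbf(\Rbb)$ for $\Gamma_\Vbf$, independent of $M'$. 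Conjugating the subdatum by $(\gamma_\Vbf,1)\in\Gamma$ translates $Y'^+$ in the fibre direction by $\gamma_\Vbf$ and fixes the base, so $X'^+$ still meets $K_S$ at $x_*$; choosing $\gamma_\Vbf\in\Gamma_\Vbf$ with $w_*+\gamma_\Vbf\in B_0$, the translated subdatum has $Y'^+$ passing through the point corresponding to $(w_*+\gamma_\Vbf,x_*)\in B_0\times K_S$. The two conjugations combine into a single $\gamma=(\gamma_\Vbf,1)(0,\gamma_\Gbf)\in\Gamma$, yielding a $\Cbf$-special subdatum defining $M'$ with $Y'^+\cap K(\Cbf,Y^+)\neq\emptyset$.

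I expect the genuine difficulty to reside entirely in the base step cited from \cite{clozel-ullmo}: that is where reduction theory (Siegel sets for $\Gbf^\der$) is needed to control the infinitely many $\Gamma_\Gbf$-conjugacy classes of $\Cbf$-special sub-symmetric-spaces, and it is the substantive input being imported rather than proved afresh. The fibre step is by contrast soft, resting only on compactness of $\Vbf(\Rbb)/\Gamma_\Vbf$. The two points demanding care are that the fundamental domain $B_0$ be uniform in $M'$ — which is automatic, as it depends only on the fixed lattice $\Gamma_\Vbf$ and not on the varying $\Vbf'$ — and that the fibre adjustment be carried out \emph{after} the base adjustment, exploiting that $\Gamma_\Vbf$ acts trivially on $X^+$ so as not to disturb the relation $X'^+\cap K_S\neq\emptyset$.
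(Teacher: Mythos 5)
Your proposal is correct and takes essentially the same route as the paper's proof: invoke the pure case $K(\Cbf,S)$ from \cite{clozel-ullmo} for the base, then lift it to $Y^+$ by means of a compact set $C_\Vbf\subset\Vbf(\Rbb)$ containing a fundamental domain for $\Gamma_\Vbf=\Ker(\Gamma\ra\Gamma_\Gbf)$, so that the compact set is in effect $C_\Vbf\times K(\Cbf,S)$. The conjugation bookkeeping you spell out (adjust the base by $\Gamma_\Gbf$ first, then translate the fibre by $\Gamma_\Vbf$, which acts trivially on $X^+$) is exactly what the paper's terser ``we may simply lift $K(\Cbf,S)$'' leaves implicit.
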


\begin{proof}
  The construction is known in the pure case $M=S$ by \cite{clozel-ullmo}. Since the condition on $\Cbf$-special Kuga subdata  only depends on the images under $\pi$, we may simply lift $K(\Cbf,S)$ into a compact subset of $Y^+$ as follows: choose a compact subset $C_\Vbf$ of $\Vbf(\Rbb)$ containing a fundamental domain for the action of $\Gamma_\Vbf(=\Ker(\Gamma\ra\Gamma_\Gbf))$ on $\Vbf(\Rbb)$, and take $K(\Cbf,M):=C_\Vbf\times X^+\cap\pi_*^\inv(K(\Cbf,S))$.
\end{proof}

Apply the lemma to the subvariety $M_\nu:=\wp_\Gamma(Y^+_\nu)\isom\Gamma_\nu\bsh Y^+_\nu$, we get a compact subset $K(\Cbf,M_\nu)$, and we may assume that the subdata $(\Pbf_n,Y_n;Y_n^+)$ of $(\Pbf_\nu,Y_\nu;Y_\nu^+)$ are chosen such that $Y_n^+\cap K(\Cbf,M_\nu)\neq\emptyset$. We thus choose $y_n\in Y_n^+\cap K(\Cbf, M_\nu)$, and assume further that $(y_n)$ converges to some $y\in K(\Cbf,M_\nu)\subset Y^+_\nu$ (for the archimedean topology). In this case we have $\mu_n=\kappa_{y_n*}\nu_n$, and $\mu:=\kappa_{y*}\nu$ is the canonical measure associated to $M_\nu$.

\begin{proposition}\label{compactness}
  The sequence $(\mu_n)$ converges to $\mu$ for the weak topology, and thus $\Pscr'(M)$ is compact.
\end{proposition}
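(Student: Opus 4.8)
The plan is to test against an arbitrary bounded continuous function $f$ on $M$ and transport the question to $\Omega$, where convergence is already available from Section~3. Since $\mu_n=\kappa_{y_n*}\nu_n$ and $\mu=\kappa_{y*}\nu$, the pushforward formula gives $\int_M f\,d\mu_n=\int_\Omega f\circ\kappa_{y_n}\,d\nu_n$ and $\int_M f\,d\mu=\int_\Omega f\circ\kappa_y\,d\nu$, so I would decompose
$$\int_M f\,d\mu_n-\int_M f\,d\mu=\underbrace{\int_\Omega\bigl(f\circ\kappa_{y_n}-f\circ\kappa_y\bigr)\,d\nu_n}_{A_n}+\underbrace{\Bigl(\int_\Omega f\circ\kappa_y\,d\nu_n-\int_\Omega f\circ\kappa_y\,d\nu\Bigr)}_{B_n}.$$
The term $B_n$ is immediate: $\kappa_y:\Omega\ra M$ is continuous and $f$ is bounded continuous, so $f\circ\kappa_y$ is a fixed bounded continuous function on $\Omega$, whence $B_n\to0$ is precisely the weak convergence $\nu_n\to\nu$ proved for the lattice space.

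The term $A_n$ carries the whole difficulty, since the orbit maps $\kappa_{y_n}$ vary with $n$ and both $\Omega$ and $M$ are non-compact. I would control it with two ingredients. The first is \emph{non-escape of mass}: the non-divergence estimates of Dani and Margulis that underlie Theorem~\ref{mozes-shah} guarantee that no mass of $\nu_n$ leaks to infinity in the limit, so the family $(\nu_n)$ is tight (equivalently, by Prokhorov, a weakly convergent sequence of probability measures with probability limit is tight). Hence for each $\delta>0$ there is a compact $Q\subset\Omega$ with $\nu_n(\Omega\setminus Q)<\delta$ for all $n$. The second is \emph{equicontinuity on compacta}: the joint orbit map $\Omega\times Y^+\ra M$, $(\Gamma^\dag g,y')\mapsto\Gamma gy'$, is well-defined and continuous, so $f$ composed with it is uniformly continuous on the compact set $Q\times K'$, where $K'\subset Y^+$ is a compact neighbourhood of $y$ containing $y_n$ for all large $n$ (recall $y_n\to y$ inside $K(\Cbf,M_\nu)$). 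Therefore $\sup_{\omega\in Q}\lvert f(\kappa_{y_n}(\omega))-f(\kappa_y(\omega))\rvert\to0$. Splitting the integral defining $A_n$ over $Q$ and over $\Omega\setminus Q$, bounding the latter crudely by $2\lVert f\rVert_\infty\,\delta$, I obtain $\limsup_n\lvert A_n\rvert\le 2\lVert f\rVert_\infty\,\delta$, and letting $\delta\to0$ gives $A_n\to0$.

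Combining the two estimates yields $\int_M f\,d\mu_n\to\int_M f\,d\mu$ for every bounded continuous $f$, that is $\mu_n\to\mu$ weakly; and $\mu=\kappa_{y*}\nu$ is by construction the canonical measure of the $\rho$-rigid $\Cbf$-special subvariety $M_\nu$, so $\mu\in\Pscr'(M)$. Compactness of $\Pscr'(M)$ then follows formally: an arbitrary sequence in $\Pscr'(M)$ is of the form $(\kappa_{y_n*}\nu_n)$ with $(\nu_n)$ in $\Pscr'(\Omega)$, and the main theorem for $\Omega$ together with the compact-set lemma lets us pass to a subsequence meeting the standing assumptions above, after which the displayed argument extracts a weakly convergent subsequence of $(\mu_n)$ with limit in $\Pscr'(M)$. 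I expect the genuine obstacle to be the uniform non-escape of mass controlling $A_n$ near infinity---this is exactly the point at which the Dani--Margulis non-divergence is indispensable---whereas the equicontinuity step and the bookkeeping are routine.
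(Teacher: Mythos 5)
Your proof is correct and follows essentially the same route as the paper: both reduce the weak convergence of $\mu_n=\kappa_{y_n*}\nu_n$ to the already-proved weak convergence $\nu_n\to\nu$ on $\Omega$, via the locally uniform convergence of the orbit maps $\kappa_{y_n}\to\kappa_y$ coming from $y_n\to y$. The only divergence is bookkeeping: the paper tests against $f\in C_c(M)$ and leaves the control of mass near infinity implicit, whereas you test against bounded continuous $f$ and make the tightness of $(\nu_n)$ explicit via Prokhorov (or Dani--Margulis non-divergence), which is a legitimate filling-in of a detail the paper's terse argument glosses over rather than a different method.
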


\begin{proof}
It suffices to notice, analogous to the pure case in \cite{clozel-ullmo}, that $\kappa_{y_n}$ converge uniformly to $\kappa_{y}$ on each compact subset of $\Omega$, and thus for $f\in C_c(M)$, $f\circ\kappa_{y_n}$ converges to $f\circ\kappa_y$ uniformly on each compact subset of $M$. Since $\nu_n$ converges to $\nu$ for the weak topology, we get
$$\int_M f\kappa_{y*}\nu=\int_\Omega f\circ \kappa_y=\lim_n\int_\Omega f\circ\kappa_{y_n}\nu_n=\lim_n\int_Mf\kappa_{y_n*}\nu_n$$ for any $f\in C_c(M)$, hence the weak convergence $\kappa_{y*}\nu=\lim_n\kappa_{y_n*}\nu_n$.
\end{proof}

It remains to show that $\Pscr'(M)$ admits the "support convergence" property.

\begin{proposition}\label{support-convergence}
  Let $(\mu_n)$ be a sequence in $\Pscr'(M)$ that converges to some $\mu$. Write $M_n=\Supp\mu_n$ and $M_\mu=\Supp\mu$. Then $M_n\subset M_\mu$ for $n$ large enough, and $\bigcup_{n> 0}M_n$ is dense in $M_\mu$ for the archimedean topology.
\end{proposition}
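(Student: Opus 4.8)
The plan is to treat the two assertions separately: the density statement will follow from general measure theory together with the weak convergence already at hand, while the inclusion $M_n\subset M_\mu$ carries the real content and will be read off from the subdatum structure fixed in the Assumption.

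For the density I would argue directly from the weak convergence $\mu_n\to\mu$ of Proposition \ref{compactness}, without descending to the lattice space. Recall that $\mu_n$ is the canonical measure supported exactly on $M_n$, and likewise $M_\mu=\Supp\mu$. Given $p\in M_\mu$ and an open neighbourhood $U$ of $p$ for the archimedean topology, the membership $p\in\Supp\mu$ gives $\mu(U)>0$; the portmanteau inequality for open sets then yields $\liminf_n\mu_n(U)\ge\mu(U)>0$, so $\mu_n(U)>0$, hence $U\cap M_n\neq\emptyset$, for all $n$ large. Letting $U$ range over a neighbourhood basis of $p$ shows $p\in\overline{\bigcup_{n>N}M_n}$ for every $N$, so $\bigcup_{n>0}M_n$ is dense in $M_\mu$. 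This step uses only that $M$ is locally compact and metrizable, so that portmanteau applies to Radon probability measures; notably it does not require the inclusion, so the two halves of the statement are logically independent.

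For the inclusion I would work in the reduced situation of the Assumption, where $(\Pbf_n,Y_n;Y_n^+)\subset(\Pbf_\nu,Y_\nu;Y_\nu^+)$ for all $n$ and $y_n\in Y_n^+\subset Y_\nu^+$. Applying the orbit description of Lemma \ref{constructing-canonical-measures}(1) to each subdatum gives $Y_n^+=\Pbf_n^\der(\Rbb)^+y_n$ and $Y_\nu^+=\Pbf_\nu^\der(\Rbb)^+y$, whence $M_n=\kappa_{y_n}(\Omega_n)=\wp_\Gamma(Y_n^+)$ and $M_\mu=\kappa_y(\Supp\nu)=\wp_\Gamma(Y_\nu^+)$, exactly as in Proposition \ref{compactness}. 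Since $\Pbf_n^\der\subset\Pbf_\nu^\der$ and since $\Pbf_\nu^\der(\Rbb)^+$ acts transitively on the single component $Y_\nu^+$ containing $y_n$, one has $Y_n^+=\Pbf_n^\der(\Rbb)^+y_n\subset\Pbf_\nu^\der(\Rbb)^+y_n=Y_\nu^+$; applying $\wp_\Gamma$ yields $M_n\subset M_\mu$. Thus, once the subdatum containment is granted, the inclusion is immediate, and its substance has been deferred to the support convergence on $\Omega$ established in Section 3.

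The passage from the reduced situation to an arbitrary convergent sequence is the usual extraction: the hypothesis $\mu_n\to\mu$ lets one lift to measures $\nu_n\in\Pscr'(\Omega)$, and the compactness of $\Pscr'(\Omega)$ together with the support convergence there reduces, after passing to a subsequence, to the Assumption. If $M_n\subset M_\mu$ were to fail for infinitely many $n$, I would apply this extraction to that subsequence; the resulting limit on $\Omega$ still pushes forward to $\mu$ under the limiting orbit map by Proposition \ref{compactness}, so the subdatum furnished by the Proposition of Section 3 again contains the relevant $(\Pbf_n,Y_n;Y_n^+)$, contradicting the assumed failure. I expect the main obstacle to be precisely this translation from $\Omega$ back to $M$: the lattice space records only $\Pbf_n^\der$, so recovering the full subvariety inclusion—together with the unipotent translations $v_n$ and the base points $y_n$—is exactly where $\rho$-rigidity (Lemma \ref{injectivity}) and the uniform convergence $\kappa_{y_n}\to\kappa_y$ on compacta become indispensable.
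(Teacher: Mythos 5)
Your proof is correct, and for the substantive half --- the inclusion $M_n\subset M_\mu$ --- it takes essentially the same route as the paper: assume failure along an infinite subsequence, lift to the lattice space, use the compactness and support convergence of $\Pscr'(\Omega)$ from Section 3 together with the base-point extraction built into Proposition \ref{compactness} to land in the reduced Assumption, identify the pushed-forward limit $\kappa_{y*}\nu$ with $\mu$ by uniqueness of weak limits, and read off $M_n\subset\wp_\Gamma(Y_\nu^+)=M_\mu$, a contradiction. Where you genuinely diverge is the density assertion: the paper's proof stops at the contradiction and never explicitly addresses $M_\mu\subset\overline{\bigcup_{n>0}M_n}$ (implicitly it would have to be deduced from the support convergence on $\Omega$ and the uniform convergence $\kappa_{y_n}\to\kappa_y$ on compacta), whereas you obtain it directly from the portmanteau inequality $\liminf_n\mu_n(U)\ge\mu(U)$ for open $U$. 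This is a genuine improvement: it is more elementary, it decouples the two halves of the statement (density needs no subdatum structure, only weak convergence of probability measures on the locally compact metrizable space $M$), and it fills in a step the paper glosses over. The one point you should make explicit is that the paper tests the weak topology against $C_c(M)$, so you need the standard remark that vague convergence of probability measures to a probability measure forbids escape of mass and hence gives weak convergence, after which the portmanteau bound applies.
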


\begin{proof}
  Assume on the contrary that there exists an infinite subsequence $(\mu_{n_m})$ such that $M_{n_m}\nsubseteq M_\mu$. We may simply assume that $(\mu_{n_m})$ is just $(\mu_n)$ itself.

 Write $\mu_n=\kappa_{y_n*}\nu_n$, with $\nu_n$ the canonical measure of the lattice subspace associated to $M_n$, given by the subdatum $(\Pbf_n,Y_n;Y_n^+)$, and $y_n\in Y_n^+$. Using the results in \ref{compactness}, we may, up to restricting to a subsequence, that $(\nu_n)$ converges to some $\nu\in\Pscr'(\Omega)$ given by some connected $\rho$-rigid $\Cbf$-special subdatum $(\Pbf',Y';Y'^+)$, that $(\Pbf_n,Y_n;Y_n^+)\subset(\Pbf',Y';Y'^+)$, and that $y_n$ converges to some $y\in Y'^+$. Then \ref{compactness} implies that $\mu_n=\kappa_{y_n*}\nu_n$ converges to $\kappa_{y*}\nu$, and it is clear that $M_n$ are all contained in the special subvariety $M':=\wp_\Gamma(Y'^+)$. We thus have $\mu=\kappa_{y*}\nu$ and $M'=M_\mu$, which contradicts the assumption that $M_n\nsubseteq M_\mu$ for all $n$.
\end{proof}

\end{document}